\newcommand*\bigcdot{\mathpalette\bigcdot@{.5}}
\newcommand*\bigcdot@[2]{\mathbin{\vcenter{\hbox{\scalebox{#2}{$\m@th#1\bullet$}}}}}
\def\ker{\operatorname{ker}}
\def\sup{\operatorname{sup}}
\def\Ran{\operatorname{Ran}}
\def\shift{\operatorname{shift}}
\def\diag{\operatorname{diag}}
\def\index{\operatorname{index}}
\newcommand{\bT}{\bm{T}}
\newcommand{\dbT}{\Delta_t(\bm{T})}
\newcommand{\KT}{K(\bm{T},\mathcal{H})}
\newcommand{\KdT}{K(\Delta_t(\bm{T}),\mathcal{H})}
\newcommand{\QH}{\mathcal{Q}(\mathcal{H})}
\theoremstyle{plain}
\newtheorem{theorem}{Theorem}[section]
\newtheorem{lemma}[theorem]{Lemma}
\newtheorem{corollary}[theorem]{Corollary}
\newtheorem{definition}[theorem]{Definition}
\theoremstyle{definition}
\newtheorem{example}[theorem]{Example}
\newtheorem{remark}[theorem]{Remark}
\numberwithin{equation}{section} \setlength{\textwidth}{6.45in}
\newenvironment{changemargin}[2]{%
  \begin{list}{}{%
    \setlength{\topsep}{0pt}%
    \setlength{\leftmargin}{#1}%
    \setlength{\rightmargin}{#2}%
    \setlength{\listparindent}{\parindent}%
    \setlength{\itemindent}{\parindent}%
    \setlength{\parsep}{\parskip}%
  }%
  \item[]}{\end{list}} 
\begin{document}

\title{ The Spectral Picture and Joint Spectral Radius \\ of the Generalized Spherical Aluthge
Transform}
\author{Chafiq Benhida}
\address{UFR de Math\'{e}matiques, Universit\'{e} des Sciences et
Technologies de Lille, F-59655 \newline Villeneuve-d'Ascq Cedex, France}
\email{chafiq.benhida@univ-lille.fr}
\author{Ra\'{u}l E. Curto}
\address{Department of Mathematics, The University of Iowa, Iowa City, Iowa
52242}
\email{raul-curto@uiowa.edu}
\author{Sang Hoon Lee}
\address{Department of Mathematics, Chungnam National University, Daejeon,
34134, Republic of Korea}
\email{slee@cnu.ac.kr}
\author{Jasang Yoon}
\address{School of Mathematical and Statistical Sciences, The University of
Texas Rio Grande Valley, Edinburg, Texas 78539, USA}
\email{jasang.yoon@utrgv.edu}
\thanks{The first named author was partially supported by Labex CEMPI
(ANR-11-LABX-0007-01).}
\thanks{The second named author was partially supported by Labex CEMPI (ANR-11-LABX-0007-01).}
\thanks{The third named author was partially supported by NRF (Korea) grant
No. 2020R1A2C1A0100584611.}
\thanks{The fourth named author was partially supported by a grant from the
University of Texas System and the Consejo Nacional de Ciencia y Tecnolog%
\'{\i}a de M\'{e}xico (CONACYT)}
\subjclass[2010]{Primary 47B20, 47B37, 47A13, 28A50; Secondary 44A60, 47-04,
47A20}
\keywords{spherical Aluthge transform, Taylor spectrum, Taylor essential
spectrum, Fredholm pairs, Fredholm index, joint spectral radius}

\begin{abstract}
For an arbitrary commuting $d$--tuple $\bT$ of Hilbert space operators, we fully determine the spectral picture of the generalized spherical Aluthge transform $\dbT$ and we prove that the spectral radius of $\bT$ can be calculated from the norms of the iterates of $\dbT$. \ Let $\bm{T} \equiv (T_1,\cdots,T_d)$ be a commuting $d$--tuple of bounded operators acting on an infinite dimensional separable Hilbert space, let $P:=\sqrt{T_1^*T_1+\cdots+T_d^*T_d}$, and let
$$
\left(
\begin{array}{c}
T_1 \\
\vdots \\
T_d
\end{array}
\right)
=
\left(
\begin{array}{c}
V_1 \\
\vdots \\
V_d
\end{array}
\right)
P
$$
be the canonical polar decomposition, with $(V_1,\cdots,V_d)$ a (joint) partial isometry and $$
\bigcap_{i=1}^d \ker T_i=\bigcap_{i=1}^d \ker V_i=\ker P.
$$
\medskip
For $0 \le t \le 1$, we define the generalized 
spherical Aluthge transform of $\bm{T}$ by 
$$
\Delta_t(\bm{T}):=(P^t V_1P^{1-t}, \cdots, P^t V_dP^{1-t}).
$$
We also let $\left\|\bm{T}\right\|_2:=\left\|P\right\|$. \ 
We first determine the spectral picture of $\Delta_t(\bm{T})$ in terms of the spectral picture of $\bm{T}$; in particular, we prove that, for any $0 \le t \le 1$,  $\Delta_t(\bm{T})$ and $\bm{T}$ have the same Taylor spectrum, the same Taylor essential spectrum, the same Fredholm index, and the same Harte spectrum. \ We then study the joint spectral radius $r(\bm{T})$, and prove that $r(\bm{T})=\lim_n\left\|\Delta_t^{(n)}(\bm{T})\right\|_2 \,\, (0 < t < 1)$, where $\Delta_t^{(n)}$ denotes the $n$--th iterate of $\Delta_t$. \ For $d=t=1$, we give an example where the above formula fails.
\end{abstract}

\maketitle
\tableofcontents

\setcounter{tocdepth}{2}


\section{Introduction} \label{Intro}

Let $\mathcal{H}$ be a complex infinite dimensional Hilbert space, let $%
\mathcal{B}(\mathcal{H})$ denote the algebra of bounded linear operators on $%
\mathcal{H}$, and let $T\in \mathcal{B}(\mathcal{H})$. \ For $T\equiv V|T|$
the canonical polar decomposition of $T$, we let $\Delta \left( T\right) \equiv \widetilde{T}:=|T|^{1/2}V|T|^{1/2}$ denote the Aluthge transform of $T$ \cite{Alu}. \ It is well known that $T$ is invertible if and only if $\widetilde{T}$ is
invertible; moreover, the spectra of $T$ and $\widetilde{T}$ are equal. \ Over the last
two decades, considerable attention has been given to the study of the
Aluthge transform; cf. \cite{ APS, CJL, DySc, FJKP, IYY, JKP1, JKP2, LLY2, Ya}. \ Moreover, the Aluthge transform has been generalized to the case of
powers of $|T|$ different from $\frac{1}{2}$ \cite{Ben,Cha, DKY, Tam} and to the case of commuting $d$--tuples of operators \cite{BenCu, BCLY, CuYo1, CuYo2, CuYo3, FeYa, KiYo1, KiYo2}. \ 

Recall that, for $t \in [0,1]$, the \textit{generalized Aluthge transform} of $T$ is $\widetilde{T^{t}}:=|T|^{t}V|T|^{1-t}$; the special instance of $t=1$ is known as the \textit{Duggal transform}, defined as $\widetilde{T}^{D}:=|T|V$ (see \cite{FJKP,KiYo2}). \ 

In this paper we will focus on the generalized {\it spherical} Aluthge
transform of commuting $d$--tuples, which we now describe. \ Let $\bm{T}\equiv (T_{1},\cdots,T_{d})$ be a commuting $d$--tuple of operators on $\mathcal{H}$, and consider the canonical polar decomposition of the
column operator 
\begin{equation} \label{eqpolar}
D_{\bm{T}}:=\left(
\begin{array}{c}
T_{1} \\
\vdots \\
T_{d}%
\end{array}%
\right);
\end{equation}
that is,
\begin{equation} 
\begin{tabular}{l}
$D_{\bm{T}}=D_{\bm{V}}P:\mathcal{H\longrightarrow H}\oplus \cdots \oplus \mathcal{H}\text{,}$%
\end{tabular}
\label{operator}
\end{equation}%
where $P:=|D_{\bT}| \equiv \sqrt{T_1^*T_1+\cdots+T_d^*T_d}$ is a positive operator on $\mathcal{H}$ and $D_{\bm{V}}:=\left(
\begin{array}{c}
V_{1} \\
\vdots \\
V_{d}
\end{array}%
\right) $ 
is a (joint) partial isometry from $\mathcal{H}$ to $\mathcal{H}\oplus \cdots \oplus \mathcal{H}$. \ Then, $(D_{\bm{V}})^*D_{\bm{V}}$ is the (orthogonal) projection onto the initial
space of the partial isometry $D_{\bm{V}}$, which in turn is
\begin{equation*}
\left(\ker D_{\bT} \right) ^{\bot }=\left( \ker T_{1}\cap \cdots \cap \ker T_{d}\right) ^{\bot }=\left(\ker P\right) ^{\bot }.
\end{equation*}%
We define the \textit{spherical} polar decomposition of $\bm{T}$ by%
\begin{equation}
\bm{T}\equiv (V_{1}P,\cdots,V_{d}P)  \label{eq2}
\end{equation}
(cf. \cite{CLY10, CuYo1, KiYo1, KiYo2}). \ We will also define the $2$--norm of $\bm{T}$ by $\left\|\bm{T}\right\|_2:=\left\|P\right\|$. \ 

For $0\leq t\leq 1$, the \textit{generalized spherical Aluthge transform} of $\bm{T}$ is the $d$--tuple 
\begin{equation}
\Delta _{t}\left( \bm{T}\right) :=(P^{t}V_{1}P^{1-t},\cdots,P^{t}V_{d}P^{1-t}).\medskip  \label{eq}
\end{equation}%
We remark that when $t=\frac{1}{2}$ (resp. $t=1$), we get the spherical
Aluthge (resp. Duggal) transform $\Delta_{\textrm{sph}}(\bm{T})$ (resp. $\widehat{%
\bm{T}}^{D}$) of $\bm{T}$; that is,%
\begin{equation}
\begin{tabular}{l}
$\Delta _{\frac{1}{2}}(\bm{T}) \equiv \Delta _{\mathrm{sph}}\left( \bm{T}\right) :=(P^{\frac{1}{2}}V_{1}P^{\frac{1}{2}},\cdots, P^{\frac{1}{2}}V_{d}P^{\frac{1}{2}})\medskip $ \\
$\text{(resp. }\Delta _{1}\left( \bm{T}\right) \equiv \widehat{\bm{T}}^{D}:=(PV_{1},\cdots,PV_{d})$.
\end{tabular}
\label{Eq}
\end{equation}

In \cite{CuYo1,CuYo2,CuYo3}, it was shown that the spherical Aluthge transform ($t=\frac{1}{2}$) respects the commutativity of the pair; the key step was the identity $V_iPV_j=V_jPV_i$, for all $i,j=1,\cdots,d$. \ Using this, one can prove that $\Delta_t(\bm{T})$ is commutative whenever $\bm{T}$ is, for all $0 \le t \le 1$ (cf. (\ref{v1pv2}) below), a fact also observed in \cite{DKY}. \ (However, the $d$--tuple $\bm{V}$ is, in general, not commutative.)  

Before we state our main results, we pause to recall the notation we will use for several spectral systems. \ We shall let $\sigma_T$ denote the {\it Taylor} spectrum, $\sigma_{Te}$ the {\it Taylor essential} spectrum, $\sigma_p$ the {\it point} spectrum, $\sigma_{\ell}$ the {\it left} spectrum (also known as the {\it approximate point} spectrum), $\sigma_r$ the {\it right} spectrum, and $\sigma_H$ the {\it Harte} spectrum ($\sigma_H:=\sigma_{\ell}\cup\sigma_r)$. \ 

\begin{theorem} \label{Taylorsp}

Let $\bm{T}=\bm{(}T_{1},\cdots,T_{d})$ be a commuting $d$--tuple of operators on $\mathcal{H}$, and let $0 < t \le 1$. Then
\begin{equation*}
\sigma _{T}(\Delta _{t}\left( \bm{T}\right))=\sigma _{T}(\bm{T}).\medskip
\end{equation*}
As a consequence, we have 
$$
r(\Delta _{t}\left( \bm{T}\right))=r_{T}\left( \bm{T}\right),
$$
where $r_T$ denotes the {\it joint spectral radius}; i.e., 
\begin{equation} \label{spra}
r(\bm{T}):=\sup \{\left\|\bm{\lambda}\right\|_2:= \sqrt{\left|\lambda_1\right|^2+\cdots+\left|\lambda_d\right|^2}:\bm{\lambda}\equiv(\lambda_1,\cdots,\lambda_d) \in \sigma_T(\bm{T}) \},
\end{equation}
and similarly for $r_{T}(\Delta _{t}\left( \bm{T}\right))$.
\end{theorem}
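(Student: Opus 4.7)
The plan hinges on the intertwining identity
$$
P^{t}T_i \;=\; (\Delta_t(\bm{T}))_i \, P^{t} \qquad (i=1,\ldots,d),
$$
which follows at once from $T_i=V_iP$ and $(\Delta_t(\bm{T}))_i=P^tV_iP^{1-t}$. Subtracting $\lambda_iP^t$ from both sides shows that, for every $\bm{\lambda}\in\mathbb{C}^d$, the operator $P^t$ induces a chain map between the Koszul complexes $K(\bm{T}-\bm{\lambda},\mathcal{H})$ and $K(\Delta_t(\bm{T})-\bm{\lambda},\mathcal{H})$; exploiting this observation is the core of the argument.

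As a warm-up, consider the case where $P$ is invertible. There $P^t$ is itself invertible, $\Delta_t(\bm{T})=P^t\bm{T}P^{-t}$ is genuinely similar to $\bm{T}$, and the two Koszul complexes are isomorphic level-by-level, giving $\sigma_T(\Delta_t(\bm{T}))=\sigma_T(\bm{T})$.

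For the general case the idea is to reduce to the invertible one. Since $\bigcap_i\ker T_i=\bigcap_i\ker V_i=\ker P$, both $\bm{T}$ and $\Delta_t(\bm{T})$ annihilate $\ker P$ whenever $0<t\leq 1$, so in the orthogonal decomposition $\mathcal{H}=\ker P\oplus\overline{\Ran P}$ each $T_i$ and each $(\Delta_t(\bm{T}))_i$ has block form $\bigl(\begin{smallmatrix}0 & * \\ 0 & *\end{smallmatrix}\bigr)$, with $P$ now injective on $\overline{\Ran P}$. A regularization $P_\varepsilon:=P+\varepsilon I$ (strictly positive, hence invertible) allows the warm-up to be applied at each $\varepsilon>0$; I would then pass to the limit $\varepsilon\to 0^+$, using upper semicontinuity of $\sigma_T$ together with both directions of the intertwining to squeeze out equality rather than mere inclusion. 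The main obstacle is precisely this non-invertible case: $P^t$ has neither closed range nor a bounded inverse, so the chain map it induces is not automatically a quasi-isomorphism. Transferring exactness from $K(\Delta_t(\bm{T})-\bm{\lambda},\mathcal{H})$ back to $K(\bm{T}-\bm{\lambda},\mathcal{H})$ is the delicate step, which I would attempt by pulling back Koszul splittings through $P_\varepsilon^t$ and taking norm limits within the ideal-theoretic characterization of the Taylor spectrum.

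Once the Taylor-spectrum identity is established, the spectral-radius identity $r(\Delta_t(\bm{T}))=r(\bm{T})$ follows immediately from the definition (\ref{spra}), since both radii depend only on the Taylor spectrum.
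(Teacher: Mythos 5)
There is a genuine gap, and it sits exactly where you locate it: the non-invertible $P$ case. Your $\varepsilon$-regularization cannot be made to work as described. The tuple $P_\varepsilon^{t}\bm{T}P_\varepsilon^{-t}$ is similar to $\bm{T}$ for each $\varepsilon>0$, but it is \emph{not} $\Delta_t(\bm{T})$ and does not converge to it in norm as $\varepsilon\to 0^{+}$, because $P_\varepsilon^{-t}$ blows up precisely on the part of the space where $P$ fails to be bounded below; without norm convergence of the tuples, upper semicontinuity of $\sigma_T$ gives you nothing, and in any case it would only yield an inclusion in one direction, not the equality you need. ``Pulling back Koszul splittings through $P_\varepsilon^{t}$'' degenerates for the same reason: the pulled-back splittings have norms that are unbounded as $\varepsilon\to 0^{+}$. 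A second, smaller issue is that your chain map for $\bm{T}-\bm{\lambda}$ with $\bm{\lambda}\neq\bm{0}$ is of limited use on its own: $P$ is the positive part of $\bm{T}$, not of $\bm{T}-\bm{\lambda}$, so even granting the intertwining, a non-invertible $P^{t}$ gives only a chain map, not a quasi-isomorphism, and you have no mechanism to transfer exactness back.

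The paper avoids the approximation entirely by two observations you are missing. First, at $\bm{\lambda}=\bm{0}$ one shows that Taylor invertibility of \emph{either} tuple forces $P$ to be invertible: Taylor invertibility of $\bm{T}$ implies left invertibility, i.e.\ $P=|D_{\bm{T}}|$ is invertible; Taylor invertibility of $\Delta_t(\bm{T})$ implies right invertibility, and since $D_{\Delta_t(\bm{T})}^{d-1}(D_{\Delta_t(\bm{T})}^{d-1})^{*}=P^{t}(\sum_{i}V_iP^{2(1-t)}V_i^{*})P^{t}$, surjectivity of the last boundary map forces $P^{t}$ (hence $P$) to be invertible. So when $P$ is not invertible, $\bm{0}$ lies in both Taylor spectra and there is nothing to prove; when $P$ is invertible, your warm-up applies. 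Second, for $\bm{\lambda}\neq\bm{0}$ the paper writes $\bm{T}=\bm{AB}$ and $\Delta_t(\bm{T})=\bm{BA}$ with $\bm{A}=(V_1P^{1-t},\cdots,V_dP^{1-t})$ and $\bm{B}=(P^{t},\cdots,P^{t})$, checks that $\bm{A}$ and $\bm{B}$ criss--cross commute, and invokes the Benhida--Zerouali theorem $\sigma_T(\bm{BA})\setminus\{\bm{0}\}=\sigma_T(\bm{AB})\setminus\{\bm{0}\}$. That multivariable $\bm{AB}$ versus $\bm{BA}$ result is the ingredient your argument would need to replace, and the limiting procedure you sketch does not supply it.
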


\begin{theorem} \label{Tayloressentialsp}
Let $\bm{T}=\bm{(}T_{1},\cdots,T_{d})$ be a commuting $d$--tuple of operators on $\mathcal{H}$, and let $0 < t \leq 1$. \ Then 
\begin{equation*}
\sigma _{Te}(\Delta _{t}\left( \bm{T}\right) )=\sigma _{Te}(\bm{T}).\medskip
\end{equation*}%
Moreover, the Fredholm index satisfies
$$
\mathrm{index }(\Delta _{t}(\bm{T})-\bm{\lambda})=\mathrm{index }(\bm{T}-\bm{\lambda}),
$$
for all $\bm{\lambda} \notin \sigma _{Te}(\bm{T})$.\medskip
\end{theorem}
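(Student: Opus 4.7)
The plan is to lift the argument of Theorem \ref{Taylorsp} into the Calkin algebra $\QH := \mathcal{B}(\mathcal{H})/\mathcal{K}(\mathcal{H})$ to obtain the essential spectrum equality, and then to extract the Fredholm-index equality from a norm-continuous homotopy together with a direct Koszul complex argument.

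Let $\pi : \mathcal{B}(\mathcal{H}) \to \QH$ be the canonical quotient $\ast$-homomorphism. Since $\sigma_{Te}(\bm{T}) = \sigma_T(\pi(\bm{T}))$ and similarly for $\dbT$, the first claim reduces to showing $\sigma_T(\pi(\dbT)) = \sigma_T(\pi(\bm{T}))$ inside $\QH$. Because $\pi$ is a unital $\ast$-homomorphism that respects the continuous functional calculus of positive elements, $\pi(P) = \sqrt{\sum_i \pi(T_i)^*\pi(T_i)}$ plays the role of $P$ for $\pi(\bm{T})$, and one has $\pi(T_i) = \pi(V_i)\pi(P)$ together with $\pi(\Delta_t(\bm{T})_i) = \pi(P)^t \pi(V_i) \pi(P)^{1-t}$. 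The intertwining $P^t T_i = \Delta_t(\bm{T})_i P^t$, which is the central identity underlying Theorem \ref{Taylorsp}, descends verbatim to $\pi(P)^t \pi(T_i) = \pi(\Delta_t(\bm{T})_i) \pi(P)^t$ in $\QH$. My expectation is that the proof of Theorem \ref{Taylorsp} is sufficiently algebraic — using only this intertwining and the functional calculus of $P$ — to run without modification in the Calkin algebra, delivering the required equality of Taylor spectra in $\QH$.

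For the Fredholm index, fix $\bm{\lambda} \notin \sigma_{Te}(\bm{T})$. By the essential spectrum equality just established, $\Delta_s(\bm{T}) - \bm{\lambda}$ is Taylor--Fredholm for every $s \in (0, 1]$. The telescoping identity
\[
\Delta_s(\bm{T})_i - \Delta_{s'}(\bm{T})_i = (P^s - P^{s'}) V_i P^{1-s} + P^{s'} V_i (P^{1-s} - P^{1-s'}),
\]
combined with norm-continuity of $s \mapsto P^s$ on compact subsets of $(0, 1]$ (from uniform continuity of $(s, x) \mapsto x^s$ on $[s_0, 1] \times [0, \|P\|]$), shows that $s \mapsto \Delta_s(\bm{T})$ is norm-continuous on $(0, 1]$. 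Homotopy invariance of the Fredholm index therefore renders $\mathrm{index}(\Delta_s(\bm{T}) - \bm{\lambda})$ constant across $(0,1]$.

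The main obstacle is bridging down to $s = 0$, i.e., comparing the index of $\dbT - \bm{\lambda}$ with that of $\bm{T} - \bm{\lambda}$, since $s \mapsto \Delta_s(\bm{T})$ is typically only SOT-continuous at $s = 0$ when $P$ fails to have closed range, and Fredholm index is not continuous in that topology. To close this gap I would exploit the cochain map $P^t : \KT \to \KdT$ induced by the intertwining identity (after subtracting $\bm{\lambda}$ throughout): under the Fredholm hypothesis both Koszul cohomologies are finite-dimensional, and a careful analysis of the map induced on cohomology — or, alternatively, of its Calkin-algebra reduction, which is an isomorphism by the first part of the proof — will equate the Euler characteristics, hence the indices, of the two complexes.
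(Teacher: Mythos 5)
Your first half---passing to the Calkin algebra $\QH$ and rerunning the argument of Theorem \ref{Taylorsp} there---is essentially the paper's route for the essential spectrum: for $\bm{\lambda}=\bm{0}$ the paper shows (Theorems \ref{EthmP} and \ref{EthmrP}, Corollary \ref{Cor4}) that Fredholmness of $P$ makes $\pi(P^t)$ invertible, so the Calkin-level cochain map $\varphi_t$ is an isomorphism of Koszul complexes; for $\bm{\lambda}\neq\bm{0}$ it invokes the criss--cross commutativity theorem in its essential-spectrum form (Lemma \ref{Li100}), applied to $\bm{T}=\bm{AB}$, $\dbT=\bm{BA}$ with $\bm{A}=(V_1P^{1-t},\cdots,V_dP^{1-t})$ and $\bm{B}=(P^t,\cdots,P^t)$. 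Your ``expectation'' that the proof lifts is correct, but be aware that for $\bm{\lambda}\neq\bm{0}$ the nontrivial input is not the intertwining $P^tT_i=\Delta_t(\bm{T})_iP^t$ alone; it is the criss--cross lemma (Lemma \ref{BenZe}), which must be applied to the left-multiplication tuples acting on the Banach space $\QH$.

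The index argument has a genuine gap, at precisely the point you flag as ``the main obstacle,'' and the proposed repair fails for two reasons. First, the Calkin reduction of the cochain map $\Phi_t$ is an isomorphism if and only if $\pi(P^t)$ is invertible, i.e.\ iff $P$ is Fredholm; this is \emph{not} implied by $\bm{\lambda}\notin\sigma_{Te}(\bm{T})$ when $\bm{\lambda}\neq\bm{0}$ (take $d=1$, $T=0$, $\lambda=1$: $T-\lambda$ is invertible but $P=0$). So ``its Calkin-algebra reduction is an isomorphism by the first part of the proof'' is unavailable in exactly the case you need. Second, even when the Calkin reduction \emph{is} a cochain isomorphism between Fredholm complexes, the Euler characteristics need not agree: for $d=1$ the pair $(\Phi_0,\Phi_1)=(I,U_+^*)$ intertwines $T=U_+$ with $S=I$ (since $U_+^*\,U_+=I$), both $T$ and $S$ are Fredholm, both components of the cochain map are Calkin-invertible, yet $\mathrm{index}(U_+)=-1\neq 0=\mathrm{index}(I)$. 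What a Fredholm intertwiner actually yields in one variable is $\mathrm{index}(T)-\mathrm{index}(S)=\mathrm{index}(\Phi_0)-\mathrm{index}(\Phi_1)$; the relevant obstruction happens to vanish for $\Phi_t$ (all components are powers of the positive operator $P$), but for $d\geq 2$ turning this into equality of the Euler characteristics of the two Koszul complexes is a genuine theorem, not a formal consequence. The paper closes this gap by citing S.~Li's criss--cross index theorem (Lemma \ref{Li100}), which constructs an isomorphism of the Koszul homology spaces of $\bm{AB}-\bm{\lambda}$ and $\bm{BA}-\bm{\lambda}$ for $\bm{\lambda}\neq\bm{0}$, and then handles $\bm{\lambda}=\bm{0}$ (when it lies in the Fredholm domain) via local constancy of the integer-valued index on the complement of $\sigma_{Te}(\bm{T})$---which, incidentally, also renders your norm-continuity homotopy in $s$ unnecessary.
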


\begin{theorem} \label{othersp}
Let $\bm{T}=\bm{(}T_{1},\cdots,T_{d})$ be a commuting $d$--tuple of operators on $%
\mathcal{H}$. \ Then, for $0\leq t\leq 1$, the following statements hold.
\begin{itemize}
\item[(i)] \ $\sigma _{p}(\Delta _{t}\left( \bm{T}\right) )=\sigma _{p}(\bm{T})$.
\item[(ii)] \ $\sigma _{\ell}(\Delta _{t}\left( \bm{T}\right) )=\sigma _{\ell}(\bm{T})$.
\item[(iii)] \ $\sigma _{H}(\Delta _{t}\left( \bm{T}\right) )=\sigma _{H}(\bm{T})$.
\item[(iv)] \ Assume $t>0$. \ Then $\sigma _{r}(\bm{T}) \subseteq \sigma _{r}(\Delta _{t}( \bm{T})) \subseteq \sigma_r(\bm{T}) \cup \{0\}$.
\item[(v)] \ More generally, if 
$\sigma_{\pi,k}$ and $\sigma_{\delta,k}$ denote the S\l odkowski's spectral systems \cite{Slo}, then we have \ $\sigma _{\pi,k}(\Delta _{t}\left( \bm{T}\right) )=\sigma _{\pi,k}(\bm{T}) \; (k=0,\dots,d)$ and 
$\sigma_{\delta,k}(\bm{T}) \subseteq \sigma _{\delta,k}(\Delta _{t}\left( \bm{T}\right) )\subseteq \sigma _{\delta,k}(\bm{T}) \cup \{0\} \; (k=0,\cdots,d)$. 

\end{itemize}
\end{theorem}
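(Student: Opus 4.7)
My plan is to reduce all five parts to two structural identities plus one similarity observation. The identities are the commutation $V_iPV_j = V_jPV_i$ already used by the authors to establish commutativity of $\Delta_t(\bm{T})$, together with its adjoint $V_i^*PV_j^* = V_j^*PV_i^*$; the other basic fact I will repeatedly exploit is $P^tP^{1-t} = P$. The unifying observation is that \emph{whenever $P$ is invertible}, $\Delta_t(\bm{T}) = P^t\bm{T}P^{-t}$, so the two $d$--tuples are jointly similar and agree on every similarity-invariant spectrum; the asymmetric $\{\bm{0}\}$ in (iv) and (v) will record precisely what happens when $P$ fails to be invertible.

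For (i), given $\bm{T}x = \bm{\lambda}x$ with $\bm{\lambda}\neq\bm{0}$ I will take $y := P^tx$ (which is nonzero because $Px\neq 0$) and compute $\Delta_t(\bm{T})_jy = P^tV_jP^{1-t}P^tx = P^tV_jPx = \lambda_jy$. For $\bm{\lambda}=\bm{0}$ every eigenvector lies in $\ker P$ (since $\|Px\|^2 = \sum\|T_jx\|^2$), which suffices. Conversely, from $\Delta_t(\bm{T})z = \bm{\mu}z$ with $\mu_k\neq 0$ I will take $x := V_kP^{1-t}z$; this is nonzero because $P^tV_kP^{1-t}z = \mu_kz$, and left-multiplying the eigenvalue equation by $P^{1-t}$ yields $PV_jP^{1-t}z = \mu_jP^{1-t}z$, so $T_jx = V_jPV_kP^{1-t}z = V_kPV_jP^{1-t}z = \mu_jx$. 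The case $\bm{\mu}=\bm{0}$ splits into two: either some $V_jP^{1-t}z$ is nonzero and therefore lies in $\ker P\subseteq\bigcap\ker T_i$, or else $P^{1-t}z\in\bigcap\ker V_j = \ker P$, forcing $z\in\ker P$.

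Parts (ii) and (iv) upgrade these algebraic moves to Weyl sequences. For (ii), given $\|x_n\|=1$ with $(T_j-\lambda_j)x_n\to 0$ and $\bm{\lambda}\neq\bm{0}$ I set $z_n := P^tx_n/\|P^tx_n\|$; the needed uniform lower bound on $\|P^tx_n\|$ comes from a spectral-measure splitting of $P$ at a small threshold, using $\|Px_n\|^2\to\|\bm{\lambda}\|_2^2>0$. The case $\bm{\lambda}=\bm{0}$ follows from the elementary positive-operator inequality $\|P^{1-t}x_n\|\leq\|Px_n\|^{1-t}\|x_n\|^t$. The reverse direction uses $x_n := V_kP^{1-t}z_n/\|\cdot\|$ exactly as in (i). For (iv) I run the dual: left-multiplying a Weyl sequence $(P^{1-t}V_j^*P^t-\bar\lambda_j)z_n\to 0$ by $P^t$ gives $(T_j^*-\bar\lambda_j)(P^tz_n)\to 0$, while the reverse inclusion uses $z_n := P^{1-t}V_k^*x_n/\|\cdot\|$ together with $V_j^*PV_k^* = V_k^*PV_j^*$. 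The asymmetric $\{\bm{0}\}$ in (iv) is forced by the factorization $\sum_j\Delta_t(\bm{T})_jS_j = P^t\sum_jV_jP^{1-t}S_j$: right invertibility of $\Delta_t(\bm{T})$ requires $P^t$ to be surjective, i.e. $P$ invertible, which is strictly stronger than right invertibility of $\bm{T}$.

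Part (iii) will then follow by combining (ii) and (iv), since $\sigma_H = \sigma_\ell\cup\sigma_r$ sandwiches $\sigma_H(\Delta_t(\bm{T}))$ between $\sigma_H(\bm{T})$ and $\sigma_H(\bm{T})\cup\{\bm{0}\}$, and the residual $\bm{0}$-case is killed by the similarity: $\bm{0}\notin\sigma_H(\bm{T})$ forces $\bm{T}$ left invertible, hence $P$ invertible, so $\Delta_t(\bm{T}) = P^t\bm{T}P^{-t}$ transfers invertibility. For (v), both $\sigma_{\pi,k}$ and $\sigma_{\delta,k}$ admit Koszul-complex characterizations, and I expect the same transport vectors $P^tx_n$ and $V_kP^{1-t}z_n$ to propagate approximate $k$-cycles, with the two commutation identities handling the requisite index permutations inside $\wedge^k\mathbb{C}^d$; the asymmetry in $\sigma_{\delta,k}$ should again come from surjectivity of $P^t$. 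The main obstacle I anticipate is precisely in (v): although the algebraic identities transport Koszul (co)cycles, carefully tracking the quantitative estimates across all $k+1$ exterior-algebra levels will require a bounded-below characterization of S\l odkowski's spectra adapted to each degree, along with a degree-by-degree accounting of the $\{\bm{0}\}$-asymmetry for $\sigma_{\delta,k}$.
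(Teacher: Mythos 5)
Your route is genuinely different from the paper's. The paper splits every assertion into the point $\bm{\lambda}=\bm{0}$ and the points $\bm{\lambda}\neq\bm{0}$: at $\bm{0}$ it shows that left invertibility of either tuple (resp.\ right invertibility of $\Delta_t(\bm{T})$) forces $P$ to be invertible, whence the co-chain map $\Phi_t$ induced by $P^t$ is an isomorphism of Koszul complexes (Theorems \ref{thmP} and \ref{thmrP}, Corollaries \ref{Cor1}--\ref{Cor4}); away from $\bm{0}$ it quotes the criss--cross commutativity theorems of Benhida--Zerouali and Li (Lemmas \ref{BenZe} and \ref{Li100}) applied to $\bm{A}=(V_1P^{1-t},\cdots,V_dP^{1-t})$ and $\bm{B}=(P^t,\cdots,P^t)$. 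You instead transport eigenvectors and Weyl sequences directly via $x\mapsto P^tx$ and $z\mapsto V_kP^{1-t}z$, using $V_iPV_j=V_jPV_i$, its adjoint, and $P^tP^{1-t}=P$. I checked your computations for (i), (ii) and (iv) at $\bm{\lambda}\neq\bm{0}$ and they are sound (the lower bounds on $\|P^tx_n\|$ and on $\|P^{1-t}V_k^*x_n\|$ do come out as claimed, e.g.\ from $\|PV_k^*x_n\|\to|\lambda_k|$ and $\|PV_k^*x_n\|\le\|P\|^t\|P^{1-t}V_k^*x_n\|$), and your treatment of the $\bm{0}$-point of $\sigma_r$ and $\sigma_H$ via surjectivity of $P^t$ and the similarity $\Delta_t(\bm{T})=P^t\bm{T}P^{-t}$ is exactly the paper's mechanism in Theorem \ref{thmrP} and Corollary \ref{Cor2}. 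What your approach buys is self-containedness: for $\sigma_p$, $\sigma_\ell$, $\sigma_r$, $\sigma_H$ you replace the external criss--cross citations by short direct algebra, which is arguably cleaner than the paper's own argument for those parts.

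Two genuine gaps remain. First, part (v) is a plan, not a proof: for $\sigma_{\pi,k}$ and $\sigma_{\delta,k}$ with $k\ge 1$ and $\bm{\lambda}\neq\bm{0}$ you would have to propagate approximate cycles \emph{and} closed-range conditions through all the intermediate stages of the Koszul complex, which is precisely the content of the Benhida--Zerouali/Li theorems the paper invokes; reproving them degree by degree is a substantial undertaking that your proposal only gestures at. (At $\bm{\lambda}=\bm{0}$ there is an easy fix you did not mention: since $\sigma_{\pi,0}\subseteq\sigma_{\pi,k}$ and $\sigma_{\delta,0}\subseteq\sigma_{\delta,k}$, invertibility in any of these systems already forces left, resp.\ right, invertibility, hence invertibility of $P$ and the global isomorphism of complexes --- this is Remarks \ref{remP} and \ref{remEP}.) Second, your argument for the inclusion $\sigma_\ell(\bm{T})\subseteq\sigma_\ell(\Delta_t(\bm{T}))$ at $\bm{\lambda}=\bm{0}$ rests on $\|P^{1-t}x_n\|\le\|Px_n\|^{1-t}\|x_n\|^{t}$, which degenerates to a tautology at $t=1$, so the Duggal endpoint of (ii) is not covered; note, however, that the paper's own Theorem \ref{thmP} also excludes $t=1$, so on this point you are in no worse a position than the source.
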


The first inclusion in Theorem \ref{Taylorsp}(iv) can be proper, as the following example shows.

\begin{example} \label{ex14}
For $0<t\leq 1$, consider the commuting pair $\bm{T}:=(U_+^*,0)$, where $U_+^*$ denotes the adjoint of the (unweighted) unilateral weighted shift acting on $\ell^2(\mathbb{Z}_+)$, with canonical orthonormal basis $\{e_0,e_1,\cdots\}$. \ The polar decomposition of $U_+^*$ is $U_+^*(I-E_0)$, where $E_0$ denotes the orthogonal projection onto the $1$-dimensional subspace generated by $e_0$. \ It is clear that the pair $\bm{T}$ has positive part $P=(I-E_0)$; moreover, $U_+^*(I-E_0)=U_+^*$. \ Therefore, $\Delta_t(\bm{T})=((I-E_0)^tU_+^*(I-E_0)^{1-t},0)=((I-E_0)U_+^*,0)=(U_+^*-e_0 \otimes e_1,0)$, for all $t \in \left(0,1\right]$. \ This pair is unitarily equivalent to $(0 \oplus U_+^*,0)$, and it follows that $\sigma_r(\Delta_t(\bm{T}))=(\{0\} \cup \mathbb{T}) \times \{0\}=\mathbb{T} \times \{0\} \cup \{(0,0)\}$, for all $t \in \left(0,1\right]$. \ (Here $\mathbb{T}$ denotes the closed unit circle.) \ On the other hand, $\sigma_r(\bm{T})=\mathbb{T} \times \{0\}$. \qed
\end{example}

As we will see in Section \ref{SpecPic}, the key ingredient in the Proof of Theorem \ref{Taylorsp} is a careful analysis of the role of $P$ in establishing an isomorphism between the Koszul complexes of $\bm{T}$ and $\Delta_t(\bm{T})$. \ As a consequence, we shall see in Section \ref{SpecPic} that $P$ also enters in the Proof of Theorem \ref{othersp}.

We now turn our attention to the spectral radius of $\bm{T}$. \ For $n=1$, T. Yamazaki proved in \cite{Ya} that the Aluthge transform of an operator $T$ can be used to calculate the spectral radius of $T$, via iteration. \ Concretely, $r(T)=\lim_n \left\|\Delta^{(n)}(T)\right\|$, where $\Delta^{(n)}$ denotes the $n$--th iterate of $\Delta$.

We first extend this result to the case of single operators and the generalized Aluthge transform.

\begin{theorem} \label{thm15}
For $0 \le t \le 1$, there exists $s_t\ge 0$ such that $r\left(
T\right) \leq s_t \leq \left\Vert T\right\Vert $ and
\begin{equation*}
\lim_{n\rightarrow \infty } \left\Vert \Delta _{t}^{\left( n\right)
}\left( T\right) \right\Vert =s_t.\medskip
\end{equation*}
\end{theorem}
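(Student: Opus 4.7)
The plan is to show that the sequence $a_n := \left\Vert \Delta_t^{(n)}(T)\right\Vert$ is monotone non-increasing and bounded below by $r(T)$, so that $s_t := \lim_n a_n$ exists automatically and satisfies $r(T) \le s_t \le a_0 = \left\Vert T\right\Vert$. Thus the whole theorem reduces to establishing two ingredients: (a) the generalized Aluthge transform is norm non-expansive, $\left\Vert \Delta_t(S)\right\Vert \le \left\Vert S\right\Vert$ for every $S \in \mathcal{B}(\mathcal{H})$ and every $t \in [0,1]$; and (b) the spectral radius is invariant, $r(\Delta_t(T)) = r(T)$.

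For ingredient (a), I would invoke the polar decomposition $S = V|S|$ and write $P := |S|$. Since $P \ge 0$, the functional calculus gives $\left\Vert P^{s}\right\Vert = \left\Vert P\right\Vert ^{s}$ for every $s \in [0,1]$, and since $V$ is a partial isometry, $\left\Vert V\right\Vert \le 1$. Therefore
\begin{equation*}
\left\Vert \Delta_t(S)\right\Vert = \left\Vert P^{t} V P^{1-t}\right\Vert \le \left\Vert P^{t}\right\Vert \left\Vert V\right\Vert \left\Vert P^{1-t}\right\Vert \le \left\Vert P\right\Vert ^{t}\left\Vert P\right\Vert ^{1-t} = \left\Vert P\right\Vert = \left\Vert S\right\Vert.
\end{equation*}
Applying this at the $n$-th step, with $S = \Delta_t^{(n-1)}(T)$, gives $a_n \le a_{n-1}$.

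For ingredient (b), I would use the well-known identity $\sigma(AB) \cup \{0\} = \sigma(BA) \cup \{0\}$ for $A, B \in \mathcal{B}(\mathcal{H})$, applied with $A = P^{t}V$ and $B = P^{1-t}$, to obtain
\begin{equation*}
\sigma(\Delta_t(T)) \cup \{0\} = \sigma(P^{t} V P^{1-t}) \cup \{0\} = \sigma(P V) \cup \{0\} = \sigma(V P) \cup \{0\} = \sigma(T) \cup \{0\},
\end{equation*}
hence $r(\Delta_t(T)) = r(T)$. Iterating yields $r(T) = r(\Delta_t^{(n)}(T)) \le \left\Vert \Delta_t^{(n)}(T)\right\Vert = a_n$ for every $n$, giving the desired lower bound. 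Combining (a) and (b), $(a_n)$ is non-increasing and bounded below by $r(T)$, and convergence to the claimed $s_t \in [r(T), \left\Vert T\right\Vert]$ follows.

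I do not anticipate a real obstacle: the statement is carefully calibrated so that \emph{existence} of the limit $s_t$ — and not the stronger assertion $s_t = r(T)$ à la Yamazaki — is the only thing one must verify. The genuinely hard question, namely whether $s_t = r(T)$ for all $t \in (0,1)$ (including the Duggal case $t=1$), is conspicuously absent here and presumably addressed separately for $d$-tuples later in the paper; at this stage, the monotone convergence argument above is all that is required.
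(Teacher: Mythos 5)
Your proof is correct, and it follows the same overall skeleton as the paper's (a monotone non-increasing sequence of norms bounded below by the spectral radius), but it gets there by genuinely more elementary one-variable ingredients. The paper does not prove Theorem \ref{thm15} directly: it obtains it as the $d=k=1$ instance of Lemma \ref{Step6}, whose two inputs are (i) the Heinz--McIntosh inequality $\left\Vert A^{t}XB^{1-t}\right\Vert \leq \left\Vert AX\right\Vert^{t}\left\Vert XB\right\Vert^{1-t}$, used to prove $\left\Vert \Delta_t(\bm{T})\right\Vert_2 \leq \left\Vert \bm{T}\right\Vert_2$, and (ii) the equality of Taylor spectra $\sigma_T(\Delta_t(\bm{T}))=\sigma_T(\bm{T})$, established via Koszul-complex isomorphisms and criss--cross commutativity. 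You replace (i) by plain submultiplicativity together with $\left\Vert P^{s}\right\Vert=\left\Vert P\right\Vert^{s}$ --- which works in one variable precisely because the norm of a positive operator equals its spectral radius, but does not survive the passage to $d$-tuples, where the column-operator structure forces the Heinz-type estimate --- and you replace (ii) by the elementary identity $\sigma(AB)\cup\{0\}=\sigma(BA)\cup\{0\}$, which suffices for the spectral radius even though it is weaker than the full spectral equality $\sigma(\Delta_t(T))=\sigma(T)$ that the paper quotes from Huruya. Your reading of the statement is also accurate: only the \emph{existence} of the limit is at stake here, and monotone convergence delivers exactly that. Two minor points worth recording: at the endpoints $t\in\{0,1\}$ one of the exponents is $0$, and $\left\Vert P^{0}\right\Vert\leq 1=\left\Vert P\right\Vert^{0}$ whether $P^{0}$ is read as the identity or as the support projection (the case $T=0$ being trivial), so the endpoint cases are covered; and your parenthetical speculation about whether $s_t=r(T)$ in the Duggal case $t=1$ is in fact resolved negatively by the paper's Example \ref{ex67}, though this has no bearing on the correctness of your argument.
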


\begin{theorem} \label{thm16}
For $0 \le t < 1$, we have
\begin{equation} \label{eq16}
\lim_{n\rightarrow \infty }\left\Vert \Delta _{t}^{\left( n\right)
}\left( T\right) \right\Vert =r\left( T\right) .\medskip
\end{equation}
\end{theorem}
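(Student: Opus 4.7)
Theorem~\ref{thm15} already supplies the existence of $s_t := \lim_n \|\Delta_t^{(n)}(T)\|$ and the inequality $r(T) \le s_t \le \|T\|$ (the lower bound stems from the $d = 1$ instance of Theorem~\ref{Taylorsp}: $\sigma(\Delta_t(T)) = \sigma(T)$, hence $r(\Delta_t^{(n)}(T)) = r(T)$ for every $n$). Thus what remains is the reverse bound $s_t \le r(T)$ for $0 < t < 1$, and my plan is to extend T.\ Yamazaki's argument \cite{Ya} (which handles $t = \tfrac12$) to the full range.

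I would first treat the invertible case. If $T = V|T|$ with $V$ unitary, a one-line induction on $m$ gives
$$\Delta_t(T)^m = |T|^t\, T^m\, |T|^{-t},$$
and iterating once more yields $\Delta_t^{(n)}(T)^m = Q_n\, T^m\, Q_n^{-1}$, where $Q_n := P_{n-1}^t \cdots P_1^t P_0^t$ is built from the positive parts $P_k := |\Delta_t^{(k)}(T)|$, with $P_0 := |T|$. Gelfand's formula applied to this similarity gives $\|\Delta_t^{(n)}(T)^m\|^{1/m} \to r(T)$ as $m \to \infty$ for each fixed $n$. I would then combine this with the monotonicity $\|\Delta_t^{(n+1)}(T)\| \le \|\Delta_t^{(n)}(T)\|$ and a Yamazaki-type operator inequality of the shape $\|\Delta_t(S)\|^{p} \le c\, \|\Delta_t(S)^{p}\|$, valid for some $p = p(t)$ and a constant $c$ uniform over $S$ in a norm-bounded class, iterated into the comparison $\|\Delta_t^{(n)}(T)\|^{p^k} \le c^{k}\, \|\Delta_t^{(n)}(T)^{p^k}\|$, to squeeze $s_t$ down to $r(T)$ as $k \to \infty$.

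For the non-invertible case I would reduce to the invertible one either by perturbing $T$ to $T_\epsilon := T + \epsilon I$ and sending $\epsilon \downarrow 0$ (using norm-continuity of $\Delta_t$ on the invertibles and upper semicontinuity of the spectrum), or by decomposing $\mathcal{H} = \ker P \oplus (\ker P)^{\perp}$, on which $V$ is a partial isometry that restricts to an isometry on the second summand, so the argument can be run on an "effectively invertible" piece while the iterates vanish on the first summand.

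\textbf{Main obstacle.} The crux is the Yamazaki-type operator inequality with an $n$-uniform constant; without it, the similarity identity recovers only the already-known equality of spectral radii. Equivalently, one must prove that the monotone decreasing sequence $\|\Delta_t^{(n)}(T)\|$ cannot stabilize strictly above $r(T)$, i.e.\ that any weak operator limit point of the iterates is normaloid with the same spectrum as $T$. The restriction $t < 1$ is essential: the similarity structure above degenerates at $t = 1$ (where $\Delta_1(T) = |T|V$ is the Duggal transform), which is precisely why the formula is expected to fail there, as the authors signal in the abstract.
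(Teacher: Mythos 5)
Your overall architecture matches the paper's in outline: the paper obtains Theorem \ref{thm16} as the $d=1$ instance of Theorem \ref{thm18A}, whose proof indeed runs through monotonicity of the iterate norms, the lower bound $r(T)\le s_t$ from spectral invariance, a power--norm comparison, and the spectral radius formula. But your argument has a genuine gap at exactly the point you flag as the ``main obstacle'': the Yamazaki-type inequality $\left\Vert \Delta_t(S)\right\Vert^{p}\le c\left\Vert \Delta_t(S)^{p}\right\Vert$ with an $S$-uniform constant is neither proved nor correctly formulated, and the iterated comparison you display, $\left\Vert \Delta_t^{(n)}(T)\right\Vert^{p^k}\le c^k\left\Vert \Delta_t^{(n)}(T)^{p^k}\right\Vert$, is provably false: taking $p^k$-th roots and letting $k\to\infty$ for \emph{fixed} $n$ would give $\left\Vert \Delta_t^{(n)}(T)\right\Vert\le r(T)$ for every $n\ge 1$, i.e.\ that $\Delta_t(T)$ is always normaloid. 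A counterexample is $T=\shift(2,1,1,\cdots)$, for which $\Delta_{1/2}(T)=\shift(\sqrt{2},1,1,\cdots)$, so $\left\Vert \Delta_{1/2}(T)\right\Vert=\sqrt{2}>1=r(T)$. The inequality that actually drives the proof --- established in Lemmas \ref{Step7} and \ref{Step8} via the Heinz--McIntosh--Kittaneh inequalities (\ref{Ineq1})--(\ref{Ineq3}) --- has a different shape: it compares powers of the transform with powers of the \emph{original} operator, $\left\Vert \Delta_t(T)^{k}\right\Vert\le\left\Vert T^{k+1}\right\Vert^{t}\left\Vert T^{k-1}\right\Vert^{1-t}\left\Vert T\right\Vert^{1-2t}$ for $0<t\le\frac12$ (with the symmetric version for $t\ge\frac12$), needs no invertibility, and is used not to bound a single norm but to prove that the double limits $L_{t,k}:=\lim_n\left\Vert \Delta_t^{(n)}(T)^{k}\right\Vert$ satisfy $L_{t,k}=L_{t,1}^{k}$ when $r(T)>0$ (Lemma \ref{Step9}); only then does the spectral radius formula applied to a fixed iterate force $L_{t,1}=r(T)$.

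Your reductions of the non-invertible case are also not workable as stated. The additive perturbation $T+\epsilon I$ gives no control of $\lim_n\left\Vert \Delta_t^{(n)}(T_\epsilon)\right\Vert$ uniformly in $n$ as $\epsilon\downarrow 0$, and the splitting $\mathcal{H}=\ker P\oplus(\ker P)^{\perp}$ does not produce an invertible piece: $(\ker P)^{\perp}$ need not reduce $T$, and injectivity of $P$ there is not invertibility. The paper sidesteps both issues because the Kittaneh inequalities require no invertibility at all; the only degenerate case needing separate treatment is $r(T)=0$, handled by passing to $T\oplus cI$ on $\mathcal{H}\oplus\mathcal{H}$ and letting $c\downarrow 0$. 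In sum, your proposal is a reasonable plan whose central lemma is missing and, in the form proposed, false; it does not constitute a proof.
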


We now extend the above results to the case of commuting $d$--tuples. \ For $t=\frac{1}{2}$, Theorem \ref{thm18} was first proved in \cite{FeYa}.

\begin{theorem} \label{thm17}
Let $\bm{T}=\bm{(}T_{1},\cdots,T_{d})$ be a commuting $d$--tuple of operators on $\mathcal{H}$. \ For $t\in \left[ 0,1\right] $ and $n \ge 1$, the sequence $%
\left\{ \left\Vert \Delta_{t}^{\left( n\right)
}\left( \bm{T}\right) \right\Vert _{2}\right\}_{n=1}^{\infty}$ satisfies 
\begin{equation*} 
r_{T}\left( \bm{T}\right) \le \left\Vert \Delta _{t}^{\left( n+1 \right) }\left( \bm{T}\right) \right\Vert _{2} \leq \left\Vert \Delta _{t}^{\left( n\right) }\left( \bm{T}\right) \right\Vert _{2}\leq
\left\Vert \bm{T}\right\Vert _{2}.\medskip
\end{equation*}
\end{theorem}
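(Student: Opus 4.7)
The plan is to decouple the chain into two elementary bounds, applied to a single commuting tuple: (i) single-step $2$-norm contraction, $\|\Delta_t(\bm{T})\|_2 \le \|\bm{T}\|_2$, and (ii) the joint spectral radius bound $r_T(\bm{T}) \le \|\bm{T}\|_2$. Once these are in hand, iteration together with Theorem \ref{Taylorsp} yields the full chain.

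For (i), the key is the factorization
\[
D_{\Delta_t(\bm{T})} = \diag(P^t,\ldots,P^t)\, D_{\bm{V}}\, P^{1-t},
\]
which is immediate from the definition of $\Delta_t$. Since $D_{\bm{V}}$ is a partial isometry ($\|D_{\bm{V}}\| \le 1$) and the bounded functional calculus gives $\|P^s\| = \|P\|^s$ for $s \in [0,1]$, submultiplicativity of the operator norm yields
\[
\|\Delta_t(\bm{T})\|_2 = \|D_{\Delta_t(\bm{T})}\| \le \|P\|^t \cdot 1 \cdot \|P\|^{1-t} = \|P\| = \|\bm{T}\|_2.
\]
Applying this inductively with $\Delta_t^{(n)}(\bm{T})$ in place of $\bm{T}$ produces the monotonicity $\|\Delta_t^{(n+1)}(\bm{T})\|_2 \le \|\Delta_t^{(n)}(\bm{T})\|_2 \le \cdots \le \|\bm{T}\|_2$, which is the right half of the claimed chain.

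For (ii), use $\sigma_T(\bm{T}) \subseteq \sigma_H(\bm{T})$; for any $\bm{\lambda} \in \sigma_\ell(\bm{T})$, pick unit vectors $x_n$ with $(T_i - \lambda_i)x_n \to 0$ for each $i$, so that
\[
\sum_{i=1}^d |\lambda_i|^2 = \lim_n \sum_{i=1}^d \|T_i x_n\|^2 = \lim_n \langle P^2 x_n, x_n\rangle \le \|P\|^2 = \|\bm{T}\|_2^2;
\]
the right-spectrum case reduces to the former by passing to adjoints. Taking suprema gives $r_T(\bm{T}) \le \|\bm{T}\|_2$. Applying the same bound to the commuting tuple $\Delta_t^{(n+1)}(\bm{T})$ and using Theorem \ref{Taylorsp} iterated $n+1$ times (for $0 < t \le 1$) yields
\[
r_T(\bm{T}) = r_T(\Delta_t^{(n+1)}(\bm{T})) \le \|\Delta_t^{(n+1)}(\bm{T})\|_2,
\]
while the case $t = 0$ is immediate since $\Delta_0(\bm{T}) = \bm{T}$.

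No step here is deep: the entire argument rests on the factorization of $D_{\Delta_t(\bm{T})}$ through $D_{\bm{V}}$ and on a routine approximate-eigenvalue estimate; the non-trivial input $\sigma_T(\Delta_t(\bm{T})) = \sigma_T(\bm{T})$ has been handled in Theorem \ref{Taylorsp}. The main ``obstacle'' is really just recognizing that the $2$-norm tracks $\|P\|$ and then invoking the spectral invariance already established.
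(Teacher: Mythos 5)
Your right-hand half of the chain is correct and in fact more elementary than the paper's route: the factorization $D_{\Delta_t(\bm{T})}=\diag(P^t,\cdots,P^t)\,D_{\bm{V}}\,P^{1-t}$, together with $\|D_{\bm{V}}\|\le 1$ and $\|P^s\|=\|P\|^s$, gives $\|\Delta_t(\bm{T})\|_2\le\|\bm{T}\|_2$ by plain submultiplicativity, whereas the paper obtains the same monotonicity as the $k=1$ case of Lemma \ref{Step3}, which rests on the Heinz--McIntosh--Kittaneh inequality (\ref{Ineq3}). That part stands, as does your final step of transferring the lower bound to the iterates via Theorem \ref{Taylorsp} (with $t=0$ handled trivially).

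The gap is in your part (ii). The inclusion $\sigma_T(\bm{T})\subseteq\sigma_H(\bm{T})$ you invoke is false in general; the correct inclusion is the reverse one, $\sigma_H=\sigma_\ell\cup\sigma_r=\sigma_{\pi,0}\cup\sigma_{\delta,0}\subseteq\sigma_T$, as recorded in Section \ref{Notation}, and a point can lie in $\sigma_T(\bm{T})\setminus\sigma_H(\bm{T})$ when the Koszul complex fails to be exact only at an intermediate stage. So your approximate-eigenvector estimate does not reach every $\bm{\lambda}\in\sigma_T(\bm{T})$. Moreover, even on the portion it does reach, the right-spectrum half is off: passing to adjoints bounds $\sum_i|\lambda_i|^2$ by $\left\|\sum_i T_iT_i^*\right\|$, not by $\|P\|^2=\left\|\sum_i T_i^*T_i\right\|$, and these two norms differ in general for $d\ge 2$ (e.g.\ for $\bm{T}=(S^*,S^{*2})$ with $S$ a finite weighted shift). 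What is actually needed is the inequality $\|\bm{\lambda}\|_2\le\|P\|$ for \emph{every} $\bm{\lambda}\in\sigma_T(\bm{T})$, and the paper obtains it by a genuinely different device: the elementary operator $M_{\bm{T}}(X)=\sum_iT_i^*XT_i$ satisfies $\sigma(M_{\bm{T}})=\{\bar\lambda_1\lambda_1+\cdots+\bar\lambda_d\lambda_d:\bm{\lambda}\in\sigma_T(\bm{T})\}$ (Section \ref{Notation}, citing \cite{Cu3}), whence $\|\bm{\lambda}\|_2^2\le r(M_{\bm{T}})\le\|M_{\bm{T}}\|=\|P\|^2$; alternatively, Lemma \ref{lemMuSo} combined with Lemma \ref{Step1} gives $r_T(\bm{T})=\lim_k\|\bm{T}^k\|_2^{1/k}\le\|\bm{T}\|_2$. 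You need one of these inputs (or the nontrivial theorem that all subspectra with the projection property have the same polynomially convex hull, which would let you work with $\sigma_\ell$ alone); the false set inclusion cannot be patched as written.
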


\begin{theorem} \label{thm18}
Let $\bm{T}=\bm{(}T_{1},\cdots,T_{d})$ be a commuting $d$--tuple of operators on $\mathcal{H}$. \ For $0 < t < 1$, we have%
\begin{equation} \label{eq17}
r_{T}\left( \bm{T}\right) = \lim_{n\rightarrow \infty } \left\Vert \Delta _{t}^{\left( n\right) }\left( \bm{T}\right) \right\Vert
_{2} .\medskip
\end{equation}
\end{theorem}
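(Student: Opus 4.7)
By Theorem \ref{thm17}, the sequence $\{\|\Delta_t^{(n)}(\bm{T})\|_2\}_{n\ge 1}$ is decreasing and bounded below by $r_T(\bm{T})$, so its limit exists and is at least $r_T(\bm{T})$; the substance of the proof is the reverse inequality. The plan is to reduce Theorem \ref{thm18} to its single-operator counterpart, Theorem \ref{thm16}, by lifting $\bm{T}$ to a single bounded operator on a tensor-product Hilbert space whose norm, spectral radius, and generalized Aluthge transform all encode the corresponding spherical data of $\bm{T}$.

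Let $\mathcal{F}$ denote the full Fock space on $d$ letters, with orthonormal basis $\{e_w\}$ indexed by words $w$, and let $L_1,\cdots,L_d$ be the left creation operators, defined by $L_i e_w:=e_{iw}$, which satisfy $L_i^*L_j=\delta_{ij}I$. Consider the lift $\mathfrak{T}:=\sum_{i=1}^d T_i\otimes L_i$ on $\mathcal{H}\otimes\mathcal{F}$. A direct computation gives $\mathfrak{T}^*\mathfrak{T}=(\sum_i T_i^*T_i)\otimes I=P^2\otimes I$, so $|\mathfrak{T}|=P\otimes I$, $\|\mathfrak{T}\|=\|P\|=\|\bm{T}\|_2$, and the partial isometry in the polar decomposition of $\mathfrak{T}$ is $\mathfrak{V}:=\sum_i V_i\otimes L_i$. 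The single-operator generalized Aluthge transform of $\mathfrak{T}$ is therefore
\begin{equation*}
\Delta_t(\mathfrak{T})=|\mathfrak{T}|^t\,\mathfrak{V}\,|\mathfrak{T}|^{1-t}=\sum_{i=1}^d(P^tV_iP^{1-t})\otimes L_i,
\end{equation*}
which is again of the same form, namely the lift of $\Delta_t(\bm{T})$. Iterating, one obtains $\Delta_t^{(n)}(\mathfrak{T})=\sum_i(\Delta_t^{(n)}(\bm{T}))_i\otimes L_i$, and therefore $\|\Delta_t^{(n)}(\mathfrak{T})\|=\|\Delta_t^{(n)}(\bm{T})\|_2$ for every $n\ge 1$.

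It remains to identify $r(\mathfrak{T})$ with $r_T(\bm{T})$. Using commutativity of $\bm{T}$, for each word $w$ of length $n$ the product $T^w$ depends only on its multi-index $\alpha\in\mathbb{Z}_+^d$, and there are $\binom{n}{\alpha}$ such words, so
\begin{equation*}
\|\mathfrak{T}^n\|^2=\left\|\sum_{|\alpha|=n}\binom{n}{\alpha}T^{*\alpha}T^\alpha\right\|.
\end{equation*}
Combining Gelfand's formula with the known identity $r_T(\bm{T})^2=\lim_n\|\sum_{|\alpha|=n}\binom{n}{\alpha}T^{*\alpha}T^\alpha\|^{1/n}$ for the spherical joint spectral radius of a commuting $d$-tuple (the commutative specialization of Popescu's noncommutative spectral radius formula) gives $r(\mathfrak{T})=r_T(\bm{T})$. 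Applying Theorem \ref{thm16} to the single operator $\mathfrak{T}$ with the same $t\in(0,1)$ then yields $\lim_n\|\Delta_t^{(n)}(\mathfrak{T})\|=r(\mathfrak{T})$, whence $\lim_n\|\Delta_t^{(n)}(\bm{T})\|_2=r_T(\bm{T})$, as required. The main work is the polar-decomposition verification of the intertwining $\Delta_t(\mathfrak{T})=\sum_i(P^tV_iP^{1-t})\otimes L_i$, which is a direct calculation relying on $L_i^*L_j=\delta_{ij}I$; the spectral-radius identification is a standard consequence of $\|\mathfrak{T}^n\|^2=\|\sum_{|\alpha|=n}\binom{n}{\alpha}T^{*\alpha}T^\alpha\|$ and the known joint spectral radius formula.
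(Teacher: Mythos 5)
Your Fock-space lift is internally consistent: with $L_1,\dots,L_d$ the left creation operators and $\mathfrak{T}:=\sum_i T_i\otimes L_i$, the identities $\mathfrak{T}^*\mathfrak{T}=P^2\otimes I$, $\Delta_t(\mathfrak{T})=\sum_i(P^tV_iP^{1-t})\otimes L_i$, $\|\Delta_t^{(n)}(\mathfrak{T})\|=\|\Delta_t^{(n)}(\bm{T})\|_2$, and $\|\mathfrak{T}^n\|^2=\|\sum_{|\alpha|=n}\binom{n}{\alpha}T^{*\alpha}T^{\alpha}\|=\|\bm{T}^n\|_2^2$ all check out, and the last of these together with Lemma \ref{lemMuSo} does give $r(\mathfrak{T})=r_T(\bm{T})$. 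The reduction of the $d$--variable statement to the $1$--variable one is therefore genuine and rather elegant.

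The gap is the final step: you invoke Theorem \ref{thm16} as a black box, but in this paper Theorem \ref{thm16} is not an independent input --- it is \emph{derived} as the $d=1$ instance of Theorem \ref{thm18} (see the Remark closing Section \ref{JointSpecRad}). So, as written, your argument is circular relative to the paper's logical structure. Nor can you substitute the prior literature: Yamazaki's theorem covers only $t=\tfrac12$, and Tam's extension to $0<t<1$ assumes $T$ invertible, whereas your $\mathfrak{T}$ is \emph{never} invertible (its range is orthogonal to $\mathcal{H}\otimes\mathbb{C}e_{\emptyset}$, the vacuum slice), so Tam's hypothesis fails for exactly the operator you need it for. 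To close the gap you would have to prove the single-operator statement for arbitrary (non-invertible) $T$ and all $t\in(0,1)$ from scratch --- which is essentially the analytic content of the paper's proof. The authors do this directly at the level of the tuple: they show $\|(\Delta_t^{(n)}(\bm{T}))^k\|_2$ decreases in $n$ (Lemmas \ref{Step2}--\ref{Step4}, via the Heinz--Kittaneh inequality (\ref{Ineq3})), bound the limits $L_{t,k}$ below by $r(\bm{T})^k$ using the spectral invariance $\sigma_T(\Delta_t(\bm{T}))=\sigma_T(\bm{T})$ (Lemma \ref{Step5}), prove the multiplicativity $L_{t,k}=L_{t,1}^k$ by an inductive squeeze (Lemmas \ref{Step7}--\ref{Step9}, split at $t=\tfrac12$), conclude via M\"uller--Soltysiak, and handle $r(\bm{T})=0$ by a separate direct-sum perturbation $\bm{T}\oplus(cI,0,\cdots,0)$. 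Your proposal, supplemented by a self-contained proof of Theorem \ref{thm16} (or restricted to situations where Tam's invertibility hypothesis can be arranged), would constitute an alternative route; as it stands, the essential difficulty has been relocated rather than resolved.
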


The organization of the paper is as follows. \ In Section \ref{Notation} we collect notation and terminology needed throughout the paper, and we also list some standard results needed for our proofs. \ We devote Section \ref{SpecPic} to the spectral results, which allow us to compare the spectral pictures of $\bm{T}$ and $\Delta_t(\bm{T})$, for all $t \in [0,1]$. \ In Section \ref{JointSpecRad} we discuss the multivariable version of the spectral radius formula for the generalized spherical Aluthge transform. \ We extend the recent result of K. Feki and T. Yamazaki \cite{FeYa} for $t=\frac{1}{2}$ to the general case of $0 < t < 1$; we also discuss what happens when $t=1$. \ The proof of Theorem \ref{thm18} is rather subtle, in that we need to study in great detail the behavior of the norms of $\bT$, its generalized spherical Aluthge transform $\dbT$, the successive iterates $\Delta_T^{(n)}(\bT)$, and the norms of the respective powers, of the form $\left\|(\Delta_t^{(n)}(\bT))^k\right\|_2$. \ There is a rather fascinating interplay between the norms of the powers and the norms of the iterates.  

\section{Notation and Preliminaries} \label{Notation}

Let $\bm{T} \equiv (T_1,T_2)$ be a commuting pair of operators on $\mathcal{H}$, and consider the Koszul complex of $\bm{T}$ on $\mathcal{H}$; that is,
\begin{equation*}
K(\bm{T},\mathcal{H}):\; \; 0 \longrightarrow \mathcal{H} 
\xrightarrow{
\left(
\begin{array}{c}
T_{1} \\
T_{2}
\end{array}
\right)}
\mathcal{H} \oplus \mathcal{H}
\xrightarrow{\left(
\begin{array}{cc}
-T_{2} & T_{1}
\end{array}
\right) }\mathcal{H} \longrightarrow 0.
\end{equation*}
\medskip
Recall that $P=\sqrt{T_1^*T_1+T_2^*T_2}$ and that the generalized Aluthge transform is given by $\Delta_t(\bm{T})=\left(P^tV_1P^{1-t},P^tV_2P^{1-t}\right)$, for $0 \le t \le 1$, where $(V_1,V_2)$ is the joint partial isometry in the polar decomposition of $\left(
\begin{array}{c}
T_{1} \\
T_{2}%
\end{array}%
\right)$. \ We know that 
\begin{equation} \label{v1pv2}
(V_1PV_2-V_2PV_1)P=T_1T_2-T_2T_1=0,    
\end{equation}
and therefore the operator $C:=V_1PV_2-V_2PV_1$ vanishes on the range of $P$. \ Since $C$ also vanishes on the kernel of $P$ (because $\ker P=\ker V_1 \cap \ker V_2$), we easily conclude that $C=0$. \ A direct consequence of this is the commutativity of $\Delta_t(\bm{T})$ for all $0 \le t \le 1$. \ In an entirely similar way one can prove that for a commuting $d$--tuple $\bm{T} \equiv (T_1,\cdots,T_d)$ the identities 
\begin{equation} \label{v1pv2A}
V_iPV_j=V_jPV_i
\end{equation}
always hold ($1 \le i,j \le d$), and therefore the generalized spherical Aluthge transform $\Delta_t(\bm{T})$ is commutative for all $t \in [0,1]$. \ It follows that we can form two Koszul complexes, one for $\bm{T}$ and one for $\Delta_t(\bm{T})$, with boundary maps denoted by $D_{\bm{T}}^0,D_{\bm{T}}^1,\cdots,D_{\bm{T}}^{d-1}$ and by $D_{\Delta_t(\bm{T})}^0,D_{\Delta_t(\bm{T})}^1,\cdots,D_{\Delta_t(\bm{T})}^{d-1}$, respectively:
\begin{equation*}
K(\bm{T},\mathcal{H}):\; \; 0 \longrightarrow \mathcal{H} 
\xrightarrow{D_{\bm{T}}^0}\mathcal{H} \oplus \cdots \oplus \mathcal{H}
\xrightarrow{D_{\bm{T}}^1} \cdots \xrightarrow{D_{\bm{T}}^{d-2}}\mathcal{H} \oplus \cdots \oplus \mathcal{H} \xrightarrow{D_{\bm{T}}^{d-1}} \mathcal{H} \longrightarrow 0
\end{equation*}
and
\begin{equation*}
K(\Delta_t(\bm{T}),\mathcal{H}):\; \; 0 \longrightarrow \mathcal{H} 
\xrightarrow{D_{\Delta_t(\bm{T})}^0}\mathcal{H} \oplus \cdots \oplus \mathcal{H}
\xrightarrow{D_{\Delta_t(\bm{T})}^1} \cdots \xrightarrow{D_{\Delta_t(\bm{T})}^{d-2}}\mathcal{H} \oplus \cdots \oplus \mathcal{H} \xrightarrow{D_{\Delta_t(\bm{T})}^{d-1}} \mathcal{H} \longrightarrow 0 .
\end{equation*}
(For the precise definition of the Koszul complex and its boundary maps, the reader is referred to \cite{Tay1,Appl,Vas}; observe that $D_{\bm{T}}^0=D_{\bm{T}}$, as defined in (\ref{eqpolar}).)

From the definition of $\Delta_t(\bm{T})$ we readily obtain $d$ key identities:
\begin{equation*}
\begin{array}{lrcl}
(1)&(P^t \oplus \cdots \oplus P^t)D_{\bm{T}}^0 & = & D_{\Delta_t(\bm{T})}^0 P^t; \\
(2)&(P^t \oplus \cdots \oplus P^t)D_{\bm{T}}^1&=&D_{\Delta_t(\bm{T})}^1(P^t \oplus \cdots \oplus P^t); \\
\; \; \vdots&\vdots&\vdots&\vdots \\
(d)&P^t D_{\bm{T}}^{d-1}&=&D_{\Delta_t(\bm{T})}^{d-1}(P^t \oplus \cdots \oplus P^t).
\end{array}
\end{equation*}
\medskip
These identities establish, for each $0 \le t \le 1$, a co-chain homomorphism $\Phi_t: K(\bm{T},\mathcal{H}) \longrightarrow K(\Delta_t(\bm{T}),\mathcal{H})$. \label{page5} \ This homomorphism will allow us to compare, at each stage, the exactness of one complex with the exactness of the other. \ We will do this in Section \ref{SpecPic}.

\begin{remark}
Observe that, for $0<t\le1$, the fixed points of the generalized spherical Aluthge transform are the {\it spherically quasinormal} $d$--tuples; these are the $d$--tuples for which $P$ commutes with each $T_i$, or equivalently, with each $V_i$, for $i=1,\cdots,d$. \ For, if $0<t \le 1$ and $\Delta_t(\bT)=\bT$, we must have $(P^tV_i-V_iP^t)P^{1-t}=0$ for $i=1,\cdots,d$. \ Let $C_i:=P^tV_i-V_iP^t \; (i=1,\cdots,d)$. \ When $t<1$, $C_i$ vanishes on the range of $P^{1-t}$, and therefore it vanishes on the range of $P$ (for all $i=1,\cdots,d$). \ Since it also vanishes on the kernel of $P$, we conclude that $C_i=0 \; (i=1,\cdots,d)$. \ It follows that $V_i$ commutes with $P$, for all $i=1,\cdots,d$. \ When $t=1$, the commutativity of $V_i$ and $P$ ($i=1,\cdots,d$) is obvious.  \qed
\end{remark}

We now recall the definitions of Taylor and Taylor essential spectrum, and of Fredholm index.

\begin{definition} \label{def21}
A commuting $d$--tuple $\bm{T} \equiv (T_1,\cdots,T_d)$ is said to be \textit{(Taylor) invertible} if
its associated Koszul complex $K(\bm{T,}\mathcal{H})$ is exact (at each stage). \ The
Taylor spectrum of $\bm{T}$ is
\begin{equation*}
\begin{tabular}{l}
$\sigma _{T}(\bm{T}):=\{\bm{\lambda} \equiv (\lambda _{1},\cdots,\lambda _{d})\in \mathbb{C}
^{d}:K(\bm{T}-\bm{\lambda},\mathcal{H}) \text{ is not exact}\} $.
\end{tabular}%
\end{equation*}
$\bm{T}$ is said to be \textit{Fredholm} if all the homology quotients in $K(\bm{T},\mathcal{H})$ are finite-dimensional; this implies, in particular, that the boundary maps have closed range. \ The Taylor essential spectrum is the set
\begin{equation*}
\begin{tabular}{l}
$\sigma _{Te}(\bm{T}):=\{\bm{\lambda} \in \mathbb{C}
^{d}:\bm{T}-\bm{\lambda} \text{ is not Fredholm}\}$.
\end{tabular}
\end{equation*}
\end{definition}

If, given $k=0,1, \cdots,d$, one only requires that the Koszul complex be exact at the last $k+1$ stages $d-k,d-k+1,\cdots,d-1,d$, we obtain Z. S\l odkowski's spectral systems $\sigma_{\delta,k}$. \ These spectral systems lie between the right spectrum and the Taylor spectrum; that is,
$\sigma_r \equiv \sigma_{\delta,0} \subseteq \sigma_{\delta,1} \subseteq \cdots \subseteq \sigma_{\delta,d-1} \subseteq \sigma_{\delta,d} \equiv \sigma_T$. \ Similarly, if, given $k=0,1,\cdots,d$, one only requires that the Koszul complex be exact at the first $k+1$ stages $0,1,\cdots,k-1,k$, together with the closed range of $D_{\bm{T}}^k$, we obtain the spectral system $\sigma_{\pi,k}$. \ These spectral systems lie between the left spectrum and the Taylor spectrum; that is,
$\sigma_{\ell} \equiv \sigma_{\pi,0} \subseteq \sigma_{\pi,1} \subseteq \cdots \subseteq \sigma_{\pi,d} \equiv \sigma_T$. \ If $\sigma_{Z}$ denotes one of the above mentioned spectral systems, we will say that $\bm{T}$ is Z--invertible whenever $\bm{0} \equiv (0,\cdots,0) \notin \sigma_{Z}(\bm{T})$; for instance, we will refer to $\bm{T}$ as being Taylor invertible, or left invertible, or Harte invertible, and so on.  

\newpage
J.L. Taylor showed in \cite{Tay1} that, if $\mathcal{H}\neq \{0\}$, then $%
\sigma _{T}(\bm{T})$ is a nonempty, compact subset of the polydisc of
multiradius $(r(T_{1}),\cdots,r(T_{d})),$ where $r(T_{i})$ is the
spectral radius of $T_{i}$ \ ($i=1,\cdots,d$). \ As a consequence, if $\bm{\lambda} \in \sigma_T(\bm{T})$, then
$\left\|\bm{\lambda}\right\|_2 \le \sqrt{d}\left\|P\right\|$; for, $|\lambda_i|^2 \le r(T_i)^2 \le \left\|T_i^*T_i\right\|$ for every $i=1,\cdots,d$, and therefore $\left\|\bm{\lambda}\right\|_2^2 \le \sum_{i=1}^d \left\|T_i^*T_i\right\| \le d\left\|P\right\|^2$. \ However, one can do better. \ For, consider the elementary operator $M_{\bT}:\mathcal{B}(\mathcal{H})\rightarrow \mathcal{B}(\mathcal{H})$ given as $M_{\bT}(X):=\sum_{i=1}^dT_i^*XT_i \; (X \in \mathcal{B}(\mathcal{H}))$. \ We know from \cite{Cu3} that $\sigma(M_{\bT})=\{\bar{\lambda_1}\lambda_1+\cdots+\bar{\lambda_d}\lambda_d: \bm{\lambda} \in \sigma_T(\bT) \}$. \ As a result, if $\bm{\lambda} \in \sigma_T(\bT)$, we have $\left\|\bm{\lambda}\right\|_2^2 \le r(M_{\bT}) \le \left\|M_{\bT}\right\|=\left\|P\right\|^2$. \ Thus, $r(\bT) \le \left\|P\right\|=\left\|\bT\right\|_2$. \ (For additional facts about these joint spectra, the reader is referred to \cite{Cu1,Cu2,Cu3,Appl,Tay2, Slo, SZ}.)\medskip

As shown in \cite{Cu2,Appl}, the Fredholmness of $\bm{T}$ can
be detected in the Calkin algebra $\mathcal{Q}(\mathcal{H}):=\mathcal{B}(%
\mathcal{H})/\mathcal{K}(\mathcal{H})$. \ (Here $\mathcal{K}$ denotes the
closed two--sided ideal of compact operators on $\mathcal{H}$; we also let $\pi :\mathcal{B}(%
\mathcal{H})\longrightarrow \mathcal{Q}(\mathcal{H})$ denote the quotient
map.) \ Concretely, $\bm{T}$ is Fredholm on $\mathcal{H}$ if and only if
the $d$--tuple of left multiplication operators $L_{\pi (\bm{T})}:=(L_{\pi
(T_{1})},\cdots,L_{\pi (T_{d})})$ is Taylor invertible when acting on $\mathcal{Q}(\mathcal{H})$. \ In particular, $\bm{T}$ is left Fredholm on $\mathcal{H}
$ if and only if $L_{\pi (\bm{T})}$ is left invertible on $\mathcal{Q}(\mathcal{H})$.

Given a commuting $d$--tuple $\bm{T}$, by the {\it spectral picture} of $\bm{T}$, denoted $SP(\bm{T})$, we will refer to the collection of the sets $\sigma_T(\bm{T})$, $\sigma_{Te}(\bm{T})$, $\sigma_{\pi,k}(\bm{T})$, $\sigma_{\delta,k}(\bm{T})$ and $\sigma_H(\bm{T})$ (for all $k=0,1,\cdots,d$), together with the index function defined on the Fredholm domain of $\bm{T}$ (that is, the complement in $\mathbb{C}^d$ of $\sigma_{Te}(\bm{T})$).

Our strategy to describe the spectral picture of $\Delta_t(\bm{T})$ will partially rely on the theory of criss--cross commutativity for pairs of $d$--tuples of operators acting on $\mathcal{H}$. \ Given two $d$--tuples $\bm{A}$ and $\bm{B}$, we say that $\bm{A}$ and $\bm{B}$ {\it criss--cross commute} if $A_iB_jA_k=A_kB_jA_i$ and $B_iA_jB_k=B_kA_jB_i$, for all $i,j,k=1,\cdots,d.$ \ (Observe that in this definition we do not assume that $\bm{A}$ or $\bm{B}$ is commuting.) \label{crisscross} \ We now let 
$$
\bm{AB}:=(A_1B_1,\cdots,A_dB_d) \; \textrm{ and } \; \bm{BA}:=(B_1A_1,\cdots,B_dA_d).
$$
It is an easy exercise to prove that if $\bm{A}$ and $\bm{B}$ criss--cross commute, then each of $\bm{AB}$ and $\bm{BA}$ is commuting. 

Now, fix a commuting $d$--tuple $\bm{T}$ and a real number $t \in [0,1]$, and let $\bm{A}\equiv(A_1,\cdots,A_d):=(V_1P^{1-t},\cdots,V_dP^{1-t})$ and $\bm{B} \equiv (B_1,\cdots,B_d):=(P^t,\cdots,P^t)$. \ Then $\bm{AB}=\bm{T}$ and $\bm{BA}=\Delta_t(\bm{T})$\label{ABBA}; moreover, the commutativity of $\bm{T}$ implies that $\bm{A}$ and $\bm{B}$ criss--cross commute (using (\ref{v1pv2A})). \ It follows that $\Delta_t(\bm{T})$ is also commuting.

We now briefly describe unilateral weighted shifts, since we use them to generate examples. \ For $\omega \equiv \{\omega
_{n}\}_{n=0}^{\infty }$ a bounded sequence of positive real numbers (called {\it weights}), let $W_{\omega }\equiv \shift (\omega _{0},\omega
_{1},\cdots ):\ell ^{2}(\mathbb{Z}_{+})\rightarrow \ell ^{2}(\mathbb{Z}_{+})$
be the associated unilateral weighted shift, defined by 
$$W_{\omega
}e_{n}:=\omega _{n}e_{n+1}\;(\textrm{all } n\geq 0),
$$
where $\{e_{n}\}_{n=0}^{\infty
}$ is the canonical orthonormal basis in $\ell ^{2}(\mathbb{Z}_{+})$. \ As we noted in Example \ref{ex14}, $U_{+}:=\shift (1,1,\cdots )$ is the (unweighted) unilateral shift, and for $0<a<1$ we will let $S_{a}:=\shift (a,1,1,\cdots )$.\medskip
 
We will also need $2$-variable weighted shifts. \ Consider the nonnegative quadrant 
$$
\mathbb{Z}_{+}^{2}:=\{\bm{k} \equiv (k_{1},k_{2}) \in \mathbb{Z}^2:k_{1} \ge 0 \text{ and }k_{2}\ge 0 \},
$$
and the Hilbert space $\ell ^{2}(\mathbb{Z}_{+}^{2})$ of square-summable complex sequences indexed by $
\mathbb{Z}_{+}^{2}$; the canonical orthonormal basis for this space is $\{e_{\bm{k}}\}_{\bm{k} \in \mathbb{Z}_+^2}$. \ Observe that $\ell ^{2}(\mathbb{Z}_{+}^{2})$ is canonically isometrically isomorphic to the Hilbert space tensor product $\ell^2(\mathbb{Z}_+) \otimes \ell^2(\mathbb{Z}_+)$, via the map $e_{\bm{k}} \mapsto e_{k_1} \otimes e_{k_2} \; (\bm{k} \equiv (k_1,k_2) \in \mathbb{Z}_+^2)$. \ 

Assume now that we are given two double-indexed positive bounded sequences $\alpha _{\bm{k}},\beta _{\bm{k}}\in \ell ^{\infty }(\mathbb{Z}_{+}^{2})$, where $\bm{k} \in \mathbb{Z}_{+}^{2}$. \ We can then define the $2$--{\it variable weighted shift} $W_{(\alpha ,\beta )}\equiv (T_{1},T_{2})$\ by
\begin{equation*}
T_{1}e_{(k_{1},k_{2})}:=\alpha _{(k_{1},k_{2})}e_{(k_{1}+1,k_{2})} \; \text{ and
} \; T_{2}e_{(k_{1},k_{2})}:=\beta _{(k_{1},k_{2})}e_{(k_{1},k_{2}+1)}.
\end{equation*}
For all $\bm{k} \in \mathbb{Z}_{+}^{2}$, we have
\begin{equation}
T_{1}T_{2}=T_{2}T_{1}\Longleftrightarrow \beta _{(k_{1}+1,k_{2})}\alpha
_{(k_{1},k_{2})}=\alpha _{(k_{1},k_{2}+1)}\beta _{(k_{1},k_{2})}.
\label{commuting}
\end{equation}

Given two unilateral weighed shifts $W_{\omega}$ and $W_{\tau}$, a trivial way to build a $2$-variable weighted shift is to let $\alpha_{(k_1,k_2)}:=\omega_{k_1}$ and $\beta_{(k_1,k_2)}:=\tau_{k_2}$ for all $\bm{k} \in \mathbb{Z}_+^2$. \ It is not hard to see that, in this case, $W_{(\alpha,\beta)} \cong (W_{\omega} \otimes I, I \otimes W_{\tau})$. \ For additional facts about $2$-variable weighted shifts, the reader is referred to \cite{CuYo,CLY10,CuYo1,CuYo2}.\medskip

For a given $\bT$, the calculation of $\dbT$ is not always straightforward, even for relatively simple forms of the components of $\bT$. \ We now present an example, which helps to visualize the type of complications one may encounter.

\begin{example}
\label{Ex1} Let $\mathcal{H}:=\ell^2(\mathbb{Z}_+) \otimes \ell^2(\mathbb{Z}_+)$, let $\mathbf{T}=\mathbf{(}T_{1},T_{2})\cong (I\otimes
U_{+}^{\ast },U_{+}\otimes I)$, and fix $0<t\leq 1$. \ Then  
\[
\begin{tabular}{l}
$\sigma _{r}(\mathbf{T})=\sigma _{r}(\Delta_t(\mathbf{T})=\mathbb{T}\times \overline{\mathbb{D}}$.\medskip 
\end{tabular}%
\]
Observe first that
\[
P^{2}=I\otimes \left( E_{0}+2E_{0}^{\perp }\right),
\]%
where $E_{0}$ is the projection onto $\left\langle e_{0}\right\rangle $ and $%
E_{0}^{\perp }:=I-E_{0}$. \ From the functional calculus for $P$, we get%
\[
P=I\otimes \left( E_{0}+\sqrt{2}E_{0}^{\perp }\right), \;
P^t=I\otimes \left( E_{0}+2^{t/2}E_{0}^{\perp }\right), \textrm{ and } \;  
P^{1-t}=I\otimes \left( E_{0}+2^{(1-t)/2}E_{0}^{\perp }\right). 
\]%
Also, observe that $\ker P=0$. \ Let $V_{1}:=I\otimes \frac{1}{%
\sqrt{2}}U_{+}^{\ast }$. \ Then, 
\begin{eqnarray*}
V_{1}P &=&I\otimes \lbrack \frac{1}{\sqrt{2}}U_{+}^{\ast }\left( E_{0}+\sqrt{%
2}E_{0}^{\perp }\right) ]=I\otimes \lbrack \frac{1}{\sqrt{2}}\left(
U_{+}^{\ast }E_{0}+\sqrt{2}U_{+}^{\ast }E_{0}^{\perp }\right) ]\medskip  \\
&=&I\otimes U_{+}^{\ast }E_{0}^{\perp }=I\otimes U_{+}^{\ast } .
\end{eqnarray*}%
Let $V_{2}:=U_{+}\otimes \left( E_{0}+\frac{1}{\sqrt{2}}E_{0}^{\perp
}\right) $. \ Then, 
\[
V_{2}P=U_{+}\otimes \left( E_{0}+\frac{1}{\sqrt{2}}E_{0}^{\perp }\right)
\left( E_{0}+\sqrt{2}E_{0}^{\perp }\right) =U_{+}\otimes I . \medskip 
\]%
Thus, 

\begin{eqnarray*} 
V_1^*V_1+V_2^*V_2 &=& I \otimes(2^{-\frac{1}{2}}U_+ \cdot 2^{-\frac{1}{2}}U_+^*) \\
&& + [U_+^* \otimes (E_0+2^{-\frac{1}{2}}E_0^{\perp})][U_+ \otimes (E_0+2^{-\frac{1}{2}}E_0^{\perp})] \\
&=& I \otimes \frac{1}{2} U_+U_+^*+U_+^*U_+ \otimes (E_0 + \frac{1}{2}E_0^{\perp}) \\
&=& I \otimes \frac{1}{2} E_0^{\perp} + I \otimes (E_0+\frac{1}{2} E_0^{\perp}) \\
&=& I \otimes I.
\end{eqnarray*}

Therefore, $\left( V_{1},V_{2}\right) $ is a (noncommuting) joint isometry,
and $\ker V_{1}\cap \ker V_{2}=0=\ker P$. \ It follows that $\left( V_{1}P,V_{2}P\right) $ is the spherical polar
decomposition of $\mathbf{(}T_{1},T_{2})$. \ Let $0<t\leq 1$. \ Then,

\begin{eqnarray*}
P^{t}V_{1}P^{1-t} &=&[I\otimes \left( E_{0}+2^{\frac{t}{2}}E_{0}^{\perp
}\right) ]\left( I\otimes \lbrack 2^{-\frac{1}{2}}U_{+}^{\ast }\right)
[I\otimes \left( E_{0}+2^{\frac{1-t}{2}}E_{0}^{\perp }\right) ]\medskip  \\
&=&I\otimes \lbrack \left( E_{0}+2^{\frac{t}{2}}E_{0}^{\perp }\right) 2^{%
-\frac{t}{2}}U_{+}^{\ast }E_{0}^{\perp }]=I\otimes \left( 2^{-\frac{t}{2}%
}E_{0}U_{+}^{\ast }+E_{0}^{\perp }U_{+}^{\ast }\right) .
\end{eqnarray*}%

On the other hand,%
\begin{eqnarray*}
P^{t}V_{2}P^{1-t} &=&[I\otimes \left( E_{0}+2^{\frac{t}{2}}E_{0}^{\perp
}\right) ][U_{+}\otimes \left( E_{0}+2^{-\frac{1}{2}}E_{0}^{\perp }\right)
][I\otimes \left( E_{0}+2^{\frac{1-t}{2}}E_{0}^{\perp }\right) ]\medskip  \\
&=&U_{+}\otimes I.
\end{eqnarray*}
Thus, 
\[
\sigma _{r}(\Delta_t(\bT)) =\sigma _{r}(2^{-\frac{t}{2}}E_{0}U_{+}^*+E_{0}^{\perp }U_{+}^*) \times \overline{\mathbb{D}}.
\]
We now need to study the right spectrum of $W(t):=2^{-\frac{t}{2}}E_{0}U_{+}^*+E_{0}^{\perp }U_{+}^*$. \ A moment's thought reveals that $W(t)$ is the adjoint of $S_{a}\equiv \shift(a,1,1,\cdots )$, where $a:=2^{-\frac{t}{2}}$. \
Then, 
\[
\sigma _{r}(W(t))=\sigma _{r}(S_{a}^*) =\overline{\sigma _{\ell }(S_{a})}=\mathbb{T},
\]
where the bar over $\sigma_{\ell}$ denotes complex conjugation. \ It follows that
\begin{equation}
\sigma _{r}(\Delta _t(\bT))=\mathbb{T}\times \overline{\mathbb{D}}.  \label{equa1}
\end{equation}%
On the other hand, 
\begin{equation}
\sigma _{r}(\bT)=\sigma _{r}(I\otimes U_{+}^*,U_{+}\otimes I)=\sigma _{r}(U_{+}^*) \times \sigma _{r}(U_{+})=\mathbb{T}\times 
\overline{\mathbb{D}}.  \label{equa2}
\end{equation}%
Therefore, by (\ref{equa1}) and (\ref{equa2}) we have $\sigma _{r}(\dbT)=\sigma_r(\bT)$. \qed
\end{example}


\section{The Spectral Picture of the Generalized Spherical Aluthge Transform} \label{SpecPic}

In \cite{JKP2}, I.B. Jung, E. Ko and C. Pearcy proved that, for $T\in \mathcal{B}(%
\mathcal{H})$, $\sigma(\Delta_{\frac{1}{2}}(T))=\sigma(T)$; subsequently, M. Ch\=o, I. Jung, and W.Y. Lee also proved that $\sigma(\Delta_1(T))=\sigma(T)$ \cite{CJL}. \ It is also known that for $r>\epsilon \geq 0$ and $%
T\equiv V|T|^{{}}\in \mathcal{B}(\mathcal{H})$, $T$ and $|T|^{\epsilon
}V|T|^{r-\epsilon }$ have the same spectrum (\cite[Lemma 5]{Hur}). \ If we
put $r=1$ and $\epsilon =t$, then $T$ and $\Delta_t(T)$ have the same spectrum, for all $0\leq t \le 1$. \ As a consequence, we obtain $r\left( T\right) =r(\Delta_t(T))$ for all $0\leq t \le 1$,
where $r\left( T\right) $ denotes the spectral radius.\medskip

In what follows, we will extend these results to the case of commuting $d$--tuples of operators on $\mathcal{H}$. \ We will first look at Taylor invertibility, which requires an analysis of the exactness of the Koszul complexes $K(\bm{T}-\bm{\lambda},\mathcal{H})$ and $K(\Delta(\bm{T})-\bm{\lambda},\mathcal{H})$, for $\bm{\lambda} \in \mathbb{C}^d$. \  We will split the discussion into two cases: (i) $\bm{\lambda}=\bm{0}$, and (ii) $\bm{\lambda} \ne \bm{0}$. \ We will first consider the case $\bm{\lambda}=\bm{0}$.

\subsection{Taylor invertibility when $\bm{\lambda}=\bm{0}$} \label{Taylor}
\ Given a commuting $d$--tuple $\bm{T} \equiv (T_1,\cdots,T_d)$, we begin with a key connection between the invertibility of $P$ and the Koszul complex homomorphism $\Phi_t$ introduced on page \pageref{page5}, right before Definition \ref{def21}. \ We recall that $\bm{T}$ is left invertible if and only if $P$ is invertible.

\begin{theorem} \label{thmP}
Let $\bm{T}$ be a commuting $d$--tuple, let $K(\bm{T},\mathcal{H})$ be its associated Koszul complex, and let $P$ be the positive factor in the polar decomposition of $D_{\bm{T}}^0$, that is, $P=\sqrt{T_1^*T_1+\cdots+T_d^*T_d}$. \ Assume that $0<t<1$. \ The following statements are equivalent: \newline
(i) \ $\bm{T}$ is left invertible; \newline
(ii) \ $P$ is invertible; \newline
(iii) \ $\Delta_t(\bm{T})$ is left invertible.
\end{theorem}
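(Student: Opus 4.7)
The natural approach is to prove (i)$\Leftrightarrow$(ii) directly from the spherical polar decomposition, and then to prove (ii)$\Leftrightarrow$(iii) by exploiting the factorization
\[
D_{\Delta_t(\bm{T})}^{0} = (P^{t} \oplus \cdots \oplus P^{t})\, D_{\bm{V}}\, P^{1-t}
\]
together with the fact that $D_{\bm{V}}$ is a joint partial isometry whose initial space is $(\ker P)^{\perp}$. The equivalence (i)$\Leftrightarrow$(ii) is the remark recalled just above the theorem: since $\operatorname{Ran}(P) \subseteq (\ker P)^{\perp}$, one has $\|D_{\bm{T}}^{0} x\| = \|D_{\bm{V}} P x\| = \|Px\|$ for every $x$, so $D_{\bm{T}}^{0}$ is bounded below iff $P$ is, iff (since $P \geq 0$) $P$ is invertible.

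For the implication (ii)$\Rightarrow$(iii), I would observe that if $P$ is invertible then $\ker P = \{0\}$, so $D_{\bm{V}}$ is in fact a joint isometry from $\mathcal{H}$ to $\mathcal{H}^{d}$, and by functional calculus both $P^{t}$ and $P^{1-t}$ are invertible. The factorization above then realises $D_{\Delta_{t}(\bm{T})}^{0}$ as a composition of an invertible diagonal operator, an isometry, and an invertible positive operator, and hence it is bounded below; so $\Delta_{t}(\bm{T})$ is left invertible.

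The main content is (iii)$\Rightarrow$(ii). Set $Q := \sqrt{D_{\Delta_{t}(\bm{T})}^{0*}D_{\Delta_{t}(\bm{T})}^{0}}$, so that $\|D_{\Delta_{t}(\bm{T})}^{0} x\| = \|Qx\|$. A direct expansion using the above factorization yields
\[
Q^{2} = P^{1-t}\, M\, P^{1-t}, \qquad M := \sum_{i=1}^{d} V_{i}^{*} P^{2t} V_{i},
\]
where $M$ is bounded and positive (each $V_{i}$ is a contraction and $P^{2t}$ is bounded). The key estimate is the trivial operator inequality $M \leq \|M\|\, I$, which sandwiches $Q^{2} \leq \|M\|\, P^{2(1-t)}$. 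If $\Delta_{t}(\bm{T})$ is left invertible then $Q^{2} \geq \epsilon\, I$ for some $\epsilon > 0$, and combining the two inequalities forces $P^{2(1-t)} \geq (\epsilon / \|M\|)\, I$. Since $P^{1-t}$ is positive and bounded below it is invertible, and since $1-t \in (0,1)$ the functional calculus transfers invertibility from $P^{1-t}$ back to $P$.

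I do not foresee a genuine obstacle; the only delicate point is that $M$ need not commute with $P^{1-t}$, so one cannot simply move $P^{1-t}$ past $M$. The crude bound $M \leq \|M\|\, I$ is nonetheless enough to dominate $Q^{2}$ by a scalar multiple of $P^{2(1-t)}$, which is all that is needed. The hypothesis $0 < t < 1$ enters precisely at this last step, ensuring that both $P^{t}$ and $P^{1-t}$ are nontrivial positive powers of $P$ whose invertibility is equivalent to that of $P$.
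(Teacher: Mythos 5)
Your proof is correct and follows essentially the same route as the paper: (i)$\Leftrightarrow$(ii) via $\|D_{\bm{T}}^0x\|=\|Px\|$, (ii)$\Rightarrow$(iii) via the invertibility of $P^t$ and $P^{1-t}$ in the factorization of $D_{\Delta_t(\bm{T})}^0$ (the paper phrases this as the co-chain map $\Phi_t$ being an isomorphism), and (iii)$\Rightarrow$(ii) via the identity $(D_{\Delta_t(\bm{T})}^0)^*D_{\Delta_t(\bm{T})}^0=P^{1-t}\bigl(\sum_{i=1}^d V_i^*P^{2t}V_i\bigr)P^{1-t}$. Your sandwich $\epsilon I\le Q^2\le\|M\|\,P^{2(1-t)}$ is just a slightly more explicit way of drawing the conclusion the paper reaches by noting that invertibility of this product forces $P^{1-t}$ to be onto.
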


\begin{proof}

$(i) \Leftrightarrow (ii)$: This is straightforward.

$(ii) \Rightarrow (iii)$: \ Assume that $P$ is invertible. \ Then the map $\Phi_t$ is an isomorphism of Koszul complexes. \ We also know that $\bm{T}$ is left invertible; that is, the Koszul complex $K(\bm{T},\mathcal{H})$ is exact at stage $0$ and the range of $D_{\bm{T}}^0$ is closed. \ Since $\Phi_t$ is an isomorphism, we must therefore have that $K(\Delta_t(\bm{T}),\mathcal{H})$ is exact at stage $0$ and the range of $D_{\Delta_t(\bm{T})}^0$ is closed. \ Thus, $\Delta_t(\bm{T})$ is left invertible.

$(iii) \Rightarrow (ii)$: \ Observe that $(D_{\Delta(\bm{T})}^0)^*D_{\Delta(\bm{T})}^0=P^{1-t}(V_1^*PV_1+\cdots+V_d^*PV_d)P^{1-t}$. \ By assumption, $D_{\Delta(\bm{T})}^0$ is left invertible, and therefore $P^{1-t}(V_1^*PV_1+\cdots+V_d^*PV_d)P^{1-t}$ is onto. \ It follows that $P^{1-t}$ is onto, which implies that $P$ is invertible.
 
\end{proof}

\begin{remark} \label{remP}
Careful study of the Proof of Theorem \ref{thmP} reveals that left invertibility can be replaced by the invertibility associated with any of the spectral systems $\sigma_{\pi,k}$, for $0 \le k \le d$. \ For, if $0 \notin \sigma_{\pi,k}(\bm{T})$, then $0 \notin \sigma_{\pi,0}(\bm{T})$, which means that $\bm{T}$ is left invertible, and therefore $P$ is invertible, and the two Koszul complexes are isomorphic. \ We leave the details to the reader. \qed
\end{remark}

We now deal with right invertibility (which will also include invertibility relative to the spectral systems $\sigma_{\delta,k}$).

\begin{theorem} \label{thmrP}
Let $\bm{T}$ be a commuting $d$--tuple, let $K(\bm{T},\mathcal{H})$ be its associated Koszul complex, and let $P$ be the positive factor in the polar decomposition of $D_{\bm{T}}^0$. \ Assume that $0<t\le 1$, and consider the following statements. \newline
(i) \ $\bm{T}$ is right invertible; \newline
(ii) \ $P$ is invertible; \newline
(iii) \ $\Delta_t(\bm{T})$ is right invertible. \newline
Then $(iii) \Rightarrow (ii)$, and $(iii) \Rightarrow (i)$. \ The implication $(i) \Rightarrow (iii)$ is not true in general (cf. Example \ref{ex14}). 
\end{theorem}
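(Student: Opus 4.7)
My plan is to exploit the last of the co-chain identities established in the list just before Definition~\ref{def21}, namely
\begin{equation*}
P^t\,D_{\bT}^{d-1}=D_{\dbT}^{d-1}\,(P^t\oplus\cdots\oplus P^t).
\end{equation*}
Recall that right invertibility of a commuting $d$--tuple is the surjectivity of the last boundary map of its Koszul complex; equivalently, that the sum of the ranges of the entries of the tuple equals all of $\mathcal{H}$.

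For the implication $(iii)\Rightarrow(ii)$ I would observe that the range of $D_{\dbT}^{d-1}$ equals, up to signs, $\sum_{i=1}^{d}P^tV_iP^{1-t}\mathcal{H}$, which is clearly contained in $\Ran(P^t)$. Right invertibility of $\dbT$ therefore forces $\Ran(P^t)=\mathcal{H}$. Since $P^t\ge 0$, surjectivity of $P^t$ forces its adjoint $(P^t)^{*}=P^t$ to have trivial kernel, so $P^t$ is bijective; the Open Mapping Theorem then yields that $P^t$, and hence $P$, is invertible.

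For $(iii)\Rightarrow(i)$ I would combine the invertibility of $P$ just obtained with the same identity, rewritten as
\begin{equation*}
D_{\bT}^{d-1}=P^{-t}\,D_{\dbT}^{d-1}\,(P^t\oplus\cdots\oplus P^t).
\end{equation*}
Both $P^t\oplus\cdots\oplus P^t$ and $P^t$ are now bounded bijections, and $D_{\dbT}^{d-1}$ is surjective by $(iii)$, so the right-hand side is surjective; hence so is $D_{\bT}^{d-1}$, i.e., $\bT$ is right invertible. Conceptually, once $P$ is invertible the co-chain homomorphism $\Phi_t$ becomes an isomorphism of Koszul complexes at every stage, so exactness at the last stage transfers freely between $K(\bT,\mathcal{H})$ and $K(\dbT,\mathcal{H})$.

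Finally, the failure of $(i)\Rightarrow(iii)$ is already witnessed by Example~\ref{ex14}: there $\bT=(U_+^{*},0)$ is right invertible (since $\bm{0}\notin\sigma_r(\bT)=\mathbb{T}\times\{0\}$), whereas $\bm{0}\in\sigma_r(\dbT)$ for every $0<t\le 1$, so $\dbT$ fails to be right invertible. I expect the most delicate point to be the step $(iii)\Rightarrow(ii)$, namely promoting mere surjectivity of the positive operator $P^t$ to genuine invertibility of $P$; both positivity of $P^t$ and the strict inequality $t>0$ are essential to that passage, which also explains why the analogous implication with $\sigma_{\delta,k}$ in place of $\sigma_r$ will require the same hypothesis on $t$.
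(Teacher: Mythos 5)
Your proposal is correct and follows essentially the same route as the paper: both arguments extract surjectivity of $P^t$ from the right invertibility of $\dbT$ (the paper via the factorization $D_{\dbT}^{d-1}(D_{\dbT}^{d-1})^*=P^t\bigl(\sum_{i=1}^d V_iP^{2(1-t)}V_i^*\bigr)P^t$, you via the equivalent observation that $\Ran D_{\dbT}^{d-1}\subseteq\Ran(P^t)$), upgrade this to invertibility of $P$ using self-adjointness, and then transfer surjectivity of the last boundary map through the now-invertible co-chain homomorphism $\Phi_t$. Your use of Example \ref{ex14} to refute $(i)\Rightarrow(iii)$ also matches the paper.
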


\begin{proof}
$(iii) \Rightarrow (ii)$: \ Assume that $\Delta_t(\bm{T})$ is right invertible; that is, the boundary map $D_{\Delta_t(\bm{T})}^{d-1}$ is onto. \ Now recall that $D_{\Delta_t(\bm{T})}^{d-1}(D_{\Delta_t(\bm{T})}^{d-1})^*=P^t(\sum_{i=1}^d V_iP^{2(1-t)}V_i^*)P^{t}$. \ Since $t>0$, it follows that $P^t$ is onto, and therefore $P$ is invertible.

$(iii) \Rightarrow (i)$: \ Assume that $\Delta_t(\bm{T})$ is right invertible. \ By what we just proved, $P$ is invertible. \ It follows that the two Koszul complexes are isomorphic, and therefore $\bm{T}$ is right invertible.
\end{proof}

\begin{remark} \label{remEP}
Just as in the case of Remark \ref{remP}, the Proof of Theorem \ref{thmrP} reveals that right invertibility can be replaced by the invertibility associated with any of the spectral systems $\sigma_{\delta,k}$, for $0 \le k \le d$. \ For, if $0 \notin \sigma_{\delta,k}(\Delta_t(\bm{T}))$, then $0 \notin \sigma_{\delta,0}(\Delta_t(\bm{T}))$, which means that $\Delta_t(\bm{T})$ is right invertible, and therefore $P$ is invertible, and the two Koszul complexes are isomorphic. \ Again, we leave the details to the reader. \qed
\end{remark}

\begin{corollary} \label{Cor1}
Let $\bm{T}$ be a commuting $d$--tuple, let $0 < t \le 1$, and let $\Delta_t(\bm{T})$ be the generalized spherical Aluthge transform of $\bm{T}$. \ The following statements are equivalent. \newline
(i) $\bm{T}$ is Taylor invertible. \newline
(ii) $\Delta_t(\bm{T})$ is Taylor invertible.
\end{corollary}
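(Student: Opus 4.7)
The plan is to reduce Taylor invertibility, at each direction of the equivalence, to the invertibility of $P$, and then use the cochain map $\Phi_t: K(\bm{T},\mathcal{H}) \to K(\Delta_t(\bm{T}),\mathcal{H})$ introduced on page \pageref{page5}. Recall that at stage $k$ the component of $\Phi_t$ is (up to the natural indexing of the exterior algebra) a direct sum of copies of $P^t$. Hence, as soon as $P$ is invertible, every component of $\Phi_t$ is invertible, and $\Phi_t$ becomes an \emph{isomorphism} of Koszul complexes; in particular the exactness of one complex at any given stage (together with the closed range property) is equivalent to the exactness of the other at the corresponding stage.

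First I would establish the forward direction: assume $\bm{T}$ is Taylor invertible, i.e., $K(\bm{T},\mathcal{H})$ is exact at every stage. Then in particular it is exact at stage $0$ with $D_{\bm{T}}^0$ of closed range, so $\bm{T}$ is left invertible. By Theorem \ref{thmP}, $P$ is invertible; hence $\Phi_t$ is an isomorphism of complexes, which forces $K(\Delta_t(\bm{T}),\mathcal{H})$ to be exact at every stage, yielding Taylor invertibility of $\Delta_t(\bm{T})$.

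For the converse, assume $\Delta_t(\bm{T})$ is Taylor invertible. In particular, the top boundary map $D_{\Delta_t(\bm{T})}^{d-1}$ is onto, so $\Delta_t(\bm{T})$ is right invertible. By Theorem \ref{thmrP} (using $0<t\le 1$), we again conclude that $P$ is invertible, so $\Phi_t$ is an isomorphism, and Taylor invertibility is transferred back from $\Delta_t(\bm{T})$ to $\bm{T}$.

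The main delicate point is the very last sentence of the commutative-diagram argument: one must check that because every vertical arrow in $\Phi_t$ is a bounded invertible operator (with bounded inverse given by the functional calculus of $P$), the induced maps on kernels, images, and cohomology are bijections, so that not only ``exactness'' but also the closed range conditions packaged into Taylor invertibility transfer across $\Phi_t$. Once invertibility of $P$ is in hand this is routine homological algebra combined with the open mapping theorem, so the real work has already been done in Theorems \ref{thmP} and \ref{thmrP}; the corollary is then essentially the conjunction of these two statements, together with the observation that Taylor invertibility implies \emph{both} left and right invertibility at the appropriate ends of the complex.
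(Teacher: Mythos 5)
Your proposal is correct and follows essentially the same route as the paper's own proof: in each direction you reduce to the invertibility of $P$ (via Theorem \ref{thmP} applied to left invertibility for (i)$\Rightarrow$(ii), and Theorem \ref{thmrP} applied to right invertibility for (ii)$\Rightarrow$(i)), and then transfer exactness of the Koszul complexes through the isomorphism $\Phi_t$. The extra remarks about the closed-range conditions passing across the isomorphism are a harmless elaboration of what the paper leaves implicit.
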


\begin{proof}
Assume first that $\bm{T}$ is Taylor invertible. \ Then it is left invertible, and by Theorem \ref{thmP}, the operator $P$ is invertible. \ As a result, the Koszul complexes $K(\bm{T},\mathcal{H})$ and $K(\Delta_t(\bm{T}),\mathcal{H})$ are isomorphic. \ Since $K(\bm{T},\mathcal{H})$ is exact, so is $K(\Delta_t(\bm{T}),\mathcal{H})$. \ Therefore, $\Delta_t(\bm{T})$ is Taylor invertible.

Conversely, assume that $\Delta_t(\bm{T})$ is Taylor invertible, and therefore right invertible. \ By Theorem \ref{thmrP}, $P$ is invertible, and it follows that the two Koszul complexes are isomorphic. \ Then $\bm{T}$ is Taylor invertible, as desired.
\end{proof}

\begin{corollary} \label{Cor2}
Let $\bm{T}$, $0 < t \le 1$ and $\Delta_t(\bm{T})$ be as above. \ The following statements are equivalent. \newline
(i) $\bm{T}$ is Harte invertible. \newline
(ii) $\Delta_t(\bm{T})$ is Harte invertible.
\end{corollary}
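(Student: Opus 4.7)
The plan is to mirror the proof of Corollary \ref{Cor1}, replacing Taylor invertibility with Harte invertibility (i.e., simultaneous left and right invertibility), and to exploit the fact that the invertibility of $P$ alone suffices to make the co-chain homomorphism $\Phi_t$ an isomorphism of Koszul complexes at every stage.

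For the forward direction, I would suppose that $\bm{T}$ is Harte invertible. In particular $\bm{T}$ is left invertible, and the elementary equivalence $(i) \Leftrightarrow (ii)$ in Theorem \ref{thmP} (which does not depend on $t$) yields that $P$ is invertible. Consequently $\Phi_t$ becomes an isomorphism at every stage. Since an isomorphism of complexes preserves both exactness-plus-closed-range at stage $0$ (left invertibility) and surjectivity of the top boundary map (right invertibility), $\Delta_t(\bm{T})$ is both left and right invertible, hence Harte invertible.

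For the converse, I would suppose that $\Delta_t(\bm{T})$ is Harte invertible. This time I would extract right invertibility of $\Delta_t(\bm{T})$ and apply the implication $(iii) \Rightarrow (ii)$ of Theorem \ref{thmrP} to conclude that $P$ is invertible. Again $\Phi_t$ is an isomorphism of complexes, and Harte invertibility transfers back to $\bm{T}$.

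The only delicate point is choosing which ``invertibility implies $P$ invertible'' statement to invoke in each direction: the forward direction uses the left-invertibility version (valid for all $t$), while the reverse direction uses the right-invertibility version, which genuinely requires $t>0$---this is precisely why the hypothesis $0 < t \le 1$ is essential. I do not expect any serious obstacle; the proof is a direct application of Theorems \ref{thmP} and \ref{thmrP}.
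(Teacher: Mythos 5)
Your proposal is correct and follows essentially the same route as the paper: the forward direction extracts left invertibility and uses Theorem \ref{thmP} to get $P$ invertible, the converse extracts right invertibility and uses Theorem \ref{thmrP}, and in both cases the invertibility of $P$ makes the Koszul complexes isomorphic so that Harte invertibility transfers. Your added remark about which implication is $t$-independent (so that the forward direction also covers $t=1$) is a correct and worthwhile clarification, but the argument is the paper's own.
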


\begin{proof}
Assume first that $\bm{T}$ is Harte invertible. \ Then it is left invertible, and by Theorem \ref{thmP}, the operator $P$ is invertible. \ As a result, the Koszul complexes $K(\bm{T},\mathcal{H})$ and $K(\Delta_t(\bm{T}),\mathcal{H})$ are isomorphic. \ It follows that $\Delta_t(\bm{T})$ is Harte invertible.

Conversely, assume that $\Delta_t(\bm{T})$ is Harte invertible, and therefore right invertible. \ By Theorem \ref{thmrP}, $P$ is invertible, and it follows that the two Koszul complexes are isomorphic. \ Then $\bm{T}$ is Harte invertible, as desired.
\end{proof}

The following corollary is a simple consequence of the preceding results. \ We omit the proof.

\begin{corollary} \label{Cor3}
Let $\bm{T}$, $0 < t \le 1$ and $\Delta_t(\bm{T})$ be as above, and fix $0 \le k \le d$. \ The following statements hold. \newline
(i) \ $\bm{0} \notin \sigma_{\pi,k}(\Delta_t(\bm{T})) \Leftrightarrow \bm{0} \notin \sigma_{\pi,k}(\bm{T})$. \newline
(ii) \ If, in addition, $t<1$ then $\bm{0} \notin \sigma_{\delta,k}(\Delta_t(\bm{T})) \Leftrightarrow \bm{0} \notin \sigma_{\delta,k}(\bm{T})$.
\end{corollary}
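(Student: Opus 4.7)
The plan is to follow the same blueprint used for Corollaries~\ref{Cor1} and~\ref{Cor2}: once one shows that $P$ is invertible under the standing hypothesis, the cochain map $\Phi_t\colon K(\bm{T},\mathcal{H}) \longrightarrow K(\Delta_t(\bm{T}),\mathcal{H})$ defined on page~\pageref{page5} becomes a genuine isomorphism of Koszul complexes (each of its diagonal components $P^t\oplus\cdots\oplus P^t$ being invertible), so exactness at any prescribed family of stages, and closed-range conditions at any prescribed boundary map, transfer in both directions between $K(\bm{T},\mathcal{H})$ and $K(\Delta_t(\bm{T}),\mathcal{H})$. The task in each part therefore reduces to producing invertibility of $P$ from the spectral hypothesis at hand.

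For part~(i), I would invoke the inclusion $\sigma_{\pi,0}(\bm{T}) \subseteq \sigma_{\pi,k}(\bm{T})$: if $\bm{0}\notin \sigma_{\pi,k}(\bm{T})$, then $\bm{0}\notin\sigma_{\ell}(\bm{T})$, so $\bm{T}$ is left invertible and Theorem~\ref{thmP} yields invertibility of $P$; the isomorphism $\Phi_t$ then transports the exactness of $K(\bm{T},\mathcal{H})$ at stages $0,1,\dots,k$ together with the closed range of $D_{\bm{T}}^k$ to the corresponding data for $K(\Delta_t(\bm{T}),\mathcal{H})$, giving $\bm{0}\notin\sigma_{\pi,k}(\Delta_t(\bm{T}))$. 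The converse is symmetric, relying on the (iii)$\Rightarrow$(ii) arrow of Theorem~\ref{thmP} (as anticipated in Remark~\ref{remP}): $\sigma_{\pi,k}$-invertibility of $\Delta_t(\bm{T})$ forces left invertibility of $\Delta_t(\bm{T})$, which again yields $P$ invertible and permits $\Phi_t$ to be run in reverse.

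For part~(ii), the $\Rightarrow$ direction is analogous: if $\bm{0}\notin \sigma_{\delta,k}(\Delta_t(\bm{T}))$, then $\bm{0}\notin\sigma_r(\Delta_t(\bm{T}))$, so $\Delta_t(\bm{T})$ is right invertible; since $t>0$, the (iii)$\Rightarrow$(ii) arrow of Theorem~\ref{thmrP} (as used in Remark~\ref{remEP}) gives $P$ invertible, $\Phi_t$ is an isomorphism, and exactness of $K(\Delta_t(\bm{T}),\mathcal{H})$ at the last $k+1$ stages transports back to $K(\bm{T},\mathcal{H})$. The delicate step is the reverse implication: right invertibility of $\bm{T}$ by itself does not force $P$ to be invertible, as Example~\ref{ex14} illustrates, and this is precisely where the additional hypothesis $t<1$ must be used in an essential way. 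My plan here would be to exploit the strictly positive exponent $1-t$ appearing on the right of each component $P^t V_i P^{1-t}$ of $\Delta_t(\bm{T})$: writing the last boundary map as $D_{\Delta_t(\bm{T})}^{d-1} = P^t\,R$, where $R$ is the row operator built from the $V_i P^{1-t}$, I would then combine the $\sigma_{\delta,k}$-exactness of $K(\bm{T},\mathcal{H})$ with the Koszul identities on page~\pageref{page5} to force $P$ to act invertibly on the ranges of the relevant boundary maps. This final step is the one where I expect the principal technical obstacle to lie.
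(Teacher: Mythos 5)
Your part (i) and the forward half of part (ii) reproduce exactly the argument the paper intends (the proof is omitted there precisely because it is Remark \ref{remP} and Remark \ref{remEP} verbatim): $\sigma_{\pi,k}$--invertibility of either tuple forces left invertibility, hence invertibility of $P$ via Theorem \ref{thmP}, after which $\Phi_t$ is an isomorphism of Koszul complexes and all exactness and closed--range data transfer in both directions; likewise $\bm{0}\notin\sigma_{\delta,k}(\Delta_t(\bm{T}))$ forces right invertibility of $\Delta_t(\bm{T})$, hence invertibility of $P$ via Theorem \ref{thmrP}. Up to that point your proposal is correct and is the paper's own route.

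The genuine gap is the remaining implication $\bm{0}\notin\sigma_{\delta,k}(\bm{T})\Rightarrow\bm{0}\notin\sigma_{\delta,k}(\Delta_t(\bm{T}))$, which you leave as a ``plan'' hinging on the exponent $1-t>0$. You were right to expect the principal obstacle there, but no argument can close it: the implication is false. Take $k=0$ (so $\sigma_{\delta,0}=\sigma_r$) and the pair $\bm{T}=(U_+^*,0)$ of Example \ref{ex14}. Then $\bm{T}$ is right invertible, since $U_+^*U_+=I$, yet for every $t\in\left(0,1\right]$ one has $\Delta_t(\bm{T})\cong(0\oplus U_+^*,0)$, whose first coordinate is not onto; hence $\bm{0}\in\sigma_r(\Delta_t(\bm{T}))\setminus\sigma_r(\bm{T})$ even when $t<1$. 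This is consistent with Theorem \ref{othersp}(iv)--(v), where only the inclusions $\sigma_{\delta,k}(\bm{T})\subseteq\sigma_{\delta,k}(\Delta_t(\bm{T}))\subseteq\sigma_{\delta,k}(\bm{T})\cup\{\bm{0}\}$ are claimed; part (ii) of the Corollary as printed is an overstatement of those results. (It does hold for $k=d$, where $\sigma_{\delta,d}=\sigma_T$ and one can pass through left invertibility as in Corollary \ref{Cor1}, but not for general $k$.) The hypothesis $t<1$ cannot rescue your sketch: for $k<d$, $\sigma_{\delta,k}$--invertibility of $\bm{T}$ imposes nothing at stage $0$ of the Koszul complex and therefore gives no control on $P$, and in the example above $P=I-E_0$ fails to be invertible for every $t$.
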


\subsection{Fredholmness when $\bm{\lambda}=\bm{0}$} \label{Fredholm}
\ Continuing with the case $\bm{\lambda}=\bm{0}$, we will now study the Fredholmness of $\bm{T}$ and $\Delta_t(\bm{T})$. \ By now, the analogues of Theorems \ref{thmP} and \ref{thmrP} should be natural. \ We will give an abridged proof of Theorem \ref{EthmP}, and leave the proof of Theorem \ref{EthmrP} to the interested reader.

\begin{theorem} \label{EthmP}
Let $\bm{T}$ be a commuting $d$--tuple, and let $K(\bm{T},\mathcal{H})$ and $P$ be as above. \ Assume that $0<t<1$. \ The following statements are equivalent: \newline
(i) \ $\bm{T}$ is left Fredholm; \newline
(ii) \ $P$ is Fredholm; \newline
(iii) \ $\Delta_t(\bm{T})$ is left Fredholm.
\end{theorem}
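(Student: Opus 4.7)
The plan is to transplant the argument of Theorem \ref{thmP} into the Calkin algebra $\mathcal{Q}(\mathcal{H})$, exploiting the standard identification of Fredholmness with invertibility modulo compacts: $\bm{T}$ is left Fredholm on $\mathcal{H}$ if and only if $L_{\pi(\bm{T})}$ is left Taylor invertible on $\mathcal{Q}(\mathcal{H})$, and $P$ is Fredholm if and only if $\pi(P)$ is invertible in $\mathcal{Q}(\mathcal{H})$. The commutativity identities $V_iPV_j=V_jPV_i$ and the polar factorization $D_{\bm{T}}^0 = D_{\bm{V}}P$ all descend to the Calkin algebra, so every step in the proof of Theorem \ref{thmP} has a Calkin-algebra shadow.

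For (i) $\Leftrightarrow$ (ii), I would start from $(D_{\bm{T}}^0)^* D_{\bm{T}}^0 = P^2$. The forward direction is immediate: left Fredholmness of $D_{\bm{T}}^0$ forces $P^2$ to be Fredholm, and since $P \ge 0$, continuous functional calculus gives that $P$ itself is Fredholm (the kernel and essential spectrum at $0$ agree for $P$ and $P^2$). The reverse direction uses the polar decomposition: if $P$ is Fredholm, then $\ker P = \ker D_{\bm{T}}^0$ is finite-dimensional, and on $(\ker P)^\perp$ the joint partial isometry $D_{\bm{V}}$ is an honest isometry, so $D_{\bm{T}}^0 = D_{\bm{V}}P$ has closed range.

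For (ii) $\Rightarrow$ (iii) I would lean on the first cochain identity $D_{\Delta_t(\bm{T})}^0 P^t = (P^t \oplus \cdots \oplus P^t) D_{\bm{T}}^0$. When $P$ is Fredholm and $t>0$, both $P^t$ and $P^t \oplus \cdots \oplus P^t$ are Fredholm, and by (i) $\Leftrightarrow$ (ii) so is $D_{\bm{T}}^0$ (left Fredholm); hence the right-hand side is left Fredholm. Post-multiplying by a Fredholm inverse $Q$ of $P^t$ and absorbing the compact error yields
\[
D_{\Delta_t(\bm{T})}^0 \equiv (P^t \oplus \cdots \oplus P^t)\, D_{\bm{T}}^0\, Q \pmod{\mathcal{K}(\mathcal{H})},
\]
a composition of left-Fredholm operators, so $\Delta_t(\bm{T})$ is left Fredholm. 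Conversely, for (iii) $\Rightarrow$ (ii) I would compute
\[
(D_{\Delta_t(\bm{T})}^0)^* D_{\Delta_t(\bm{T})}^0 = P^{1-t} R\, P^{1-t}, \qquad R := \sum_{i=1}^d V_i^*\, P^{2t}\, V_i \ge 0.
\]
Left Fredholmness of $\Delta_t(\bm{T})$ makes this operator Fredholm, so in the Calkin algebra $C := \pi(P)^{1-t}\pi(R)\pi(P)^{1-t}$ is invertible; then $\pi(R)\pi(P)^{1-t}C^{-1}$ and $C^{-1}\pi(P)^{1-t}\pi(R)$ serve as right and left inverses of $\pi(P)^{1-t}$, forcing it to be invertible in $\mathcal{Q}(\mathcal{H})$. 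Since $1-t > 0$ and $\pi(P) \ge 0$, continuous functional calculus in $\mathcal{Q}(\mathcal{H})$ then delivers the invertibility of $\pi(P)$ itself, i.e.\ the Fredholmness of $P$.

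The main obstacle is the (iii) $\Rightarrow$ (ii) step: naively one only sees $\pi(P)^{1-t}$ wedged around the auxiliary positive element $\pi(R)$, and extracting invertibility of $\pi(P)^{1-t}$ in isolation requires the one-sided-inverse trick above. The hypothesis $0 < t < 1$ is essential at both ends: $1-t > 0$ is needed so that invertibility of $\pi(P)^{1-t}$ implies that of $\pi(P)$, and $t > 0$ is needed in (ii) $\Rightarrow$ (iii) so that $P^t$ is Fredholm and the cochain identity can be inverted modulo compacts.
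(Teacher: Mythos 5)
Your proof is correct and takes essentially the same route as the paper's (sketched) proof, which likewise identifies Fredholmness with invertibility in the Calkin algebra and replays the argument of Theorem \ref{thmP} there via the co-chain homomorphism induced by $P^t$. You merely supply the details the paper leaves to the reader, notably the one-sided-inverse extraction in (iii) $\Rightarrow$ (ii) and the correct middle factor $P^{2t}$ in $(D^0_{\Delta_t(\bm{T})})^*D^0_{\Delta_t(\bm{T})}$.
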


\begin{proof}[Sketch of Proof]
From page \pageref{page5}, recall the co-chain homomorphism 
$$
\Phi_t:K(\bT,\mathcal{H}) \rightarrow K(\dbT,\mathcal{H})
$$
induced by $P^t$. \ Let $\pi$ denote the Calkin map, and let $\pi(P^t)$ denote the corresponding element in $\mathcal{Q}(\mathcal{H})$. \ At the Calkin algebra level, $\Phi_t$ becomes $\varphi_t$, a co-chain homomorphism (induced by $L_{\pi(P^t)}$) between the Koszul complexes $K(L_{\pi(\bm{T})},\QH)$ and $K(L_{\pi(\Delta_t(\bm{T}))},\QH)$, where $\pi(\bm{T})$ and $\pi(\Delta_t(\bm{T}))$ have the obvious definitions, and $L_{\bigcdot}$ denotes the $d$--tuple of left multiplications by the coordinates of $\bigcdot$ . \newline  
$(ii) \Rightarrow (iii)$: \ Assume that $P$ is Fredholm. \ Then $P^t$ is Fredholm, and therefore $\pi(P^t)$ is an invertible element of $\mathcal{Q}(\mathcal{H})$. \ It follows that the associated map $\varphi_t$ is an isomorphism. \ Since $\bT$ is left Fredholm, $L_{\pi(\bm{T})}$ is left invertible, and therefore $L_{\pi(\Delta_t(\bm{T}))}$ is left invertible, which implies that $\dbT$ is left Fredholm.
\end{proof}

\begin{theorem} \label{EthmrP}
Let $\bm{T}$ be a commuting $d$--tuple, and let $K(\bm{T},\mathcal{H})$ and $P$ be as above. \ Assume that $0<t\le 1$, and consider the following statements. \newline
(i) \ $\bm{T}$ is right Fredholm; \newline
(ii) \ $P$ is Fredholm; \newline
(iii) \ $\Delta_t(\bm{T})$ is right Fredholm. \newline
Then $(iii) \Rightarrow (ii)$, and $(iii) \Rightarrow (i)$. \ The implication $(i) \Rightarrow (iii)$ is not true in general. 
\end{theorem}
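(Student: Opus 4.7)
The plan is to lift the proof of Theorem \ref{thmrP} to the Calkin algebra $\QH$, using the Calkin-level co-chain homomorphism $\varphi_t$ that already appeared in the sketch of Theorem \ref{EthmP}. The two central ingredients are the identity
\begin{equation*}
D_{\dbT}^{d-1}(D_{\dbT}^{d-1})^{*} = P^{t}\Bigl(\sum_{i=1}^{d} V_{i} P^{2(1-t)} V_{i}^{*}\Bigr)P^{t}
\end{equation*}
already used in the proof of Theorem \ref{thmrP}, together with the standard fact that a positive operator on $\mathcal{H}$ is Fredholm if and only if its Calkin image is invertible in $\QH$.

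For $(iii)\Rightarrow(ii)$, I would assume $\dbT$ is right Fredholm, so that $D_{\dbT}^{d-1}$ is essentially surjective; equivalently, the positive operator $D_{\dbT}^{d-1}(D_{\dbT}^{d-1})^{*} = P^{t} A P^{t}$ is Fredholm, where $A:=\sum_{i=1}^{d} V_{i}P^{2(1-t)}V_{i}^{*}$ is bounded and positive. From the operator inequality $P^{t} A P^{t} \le \|A\|\, P^{2t}$ and the invertibility of $\pi(P^{t} A P^{t})$ in $\QH$, one obtains that $\pi(P^{2t})$ is bounded below in $\QH$, so $P^{2t}$ is Fredholm; since $t>0$, $P$ is Fredholm as well.

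For $(iii)\Rightarrow(i)$, once $P$ is known to be Fredholm, $\pi(P^{t})$ is invertible in $\QH$ (precisely as in the sketch of Theorem \ref{EthmP}), so the induced co-chain homomorphism $\varphi_{t}$ on Calkin-level Koszul complexes is an isomorphism. Right invertibility of $L_{\pi(\dbT)}$ therefore transfers to right invertibility of $L_{\pi(\bT)}$, giving that $\bT$ is right Fredholm.

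To disprove $(i)\Rightarrow(iii)$, my strategy is to manufacture $\bT$ so that $P$ is \emph{not} Fredholm while $\bT$ remains right Fredholm; the already-established implication $(iii)\Rightarrow(ii)$ then precludes $\dbT$ from being right Fredholm. A natural candidate is the commuting pair $\bT:=\bigl(\bigoplus_{n=1}^{\infty} U_{+}^{*},\, 0\bigr)$ acting on $\bigoplus_{n=1}^{\infty}\ell^{2}(\mathbb{Z}_{+})$: one checks that $\sum_{i} T_{i} T_{i}^{*} = I$ is invertible (so $D_{\bT}^{d-1}$ is surjective and $\bT$ is right Fredholm, in fact right invertible), while $P^{2}=\sum_{i} T_{i}^{*} T_{i} = I - \bigoplus_{n} E_{0}^{(n)}$ has an infinite-dimensional kernel, so $P$ is not Fredholm. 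The main obstacle is precisely this counterexample: one needs a genuinely commuting tuple where the column $D_{\bT}^{0}$ and the row-like map $D_{\bT}^{d-1}$ display very different Fredholm behavior, and the construction exploits the cohyponormality of $U_{+}^{*}$ (whose $T^{*}T$ and $TT^{*}$ differ by a rank-one projection) amplified by an infinite direct sum to push $\ker P$ out of the Fredholm range while keeping the row operator invertible.
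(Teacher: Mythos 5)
Your argument is correct and follows exactly the route the paper intends: the paper leaves this proof to the reader, but its sketch of Theorem \ref{EthmP} together with the proof of Theorem \ref{thmrP} make clear that the intended argument is precisely the Calkin-level lift you carry out, using the identity $D_{\dbT}^{d-1}(D_{\dbT}^{d-1})^{*}=P^{t}(\sum_{i}V_{i}P^{2(1-t)}V_{i}^{*})P^{t}$ to force $\pi(P)$ to be invertible and then transferring right invertibility through the isomorphism $\varphi_t$. Your order-theoretic step $\pi(P^{t}AP^{t})\le\|A\|\,\pi(P^{2t})$ is a clean way to conclude that $\pi(P^{2t})$, hence $\pi(P)$, is invertible. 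The one genuine addition is your counterexample, which the paper omits entirely: it is correct, and the infinite amplification $\bigoplus_{n}U_{+}^{*}$ is really needed, since in the single-copy Example \ref{ex14} the operator $P=I-E_{0}$ is Fredholm and $\Delta_t(\bm{T})$ there is in fact right Fredholm (though not right invertible), so that example only refutes the Hilbert-space-level implication of Theorem \ref{thmrP} and not the essential one.
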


\begin{corollary} \label{Cor4}
Let $\bm{T}$, $0 < t \le 1$ and $\Delta_t(\bm{T})$ be as above. \ The following statements are equivalent. \newline
(i) $\bm{T}$ is Fredholm. \newline
(ii) $\Delta_t(\bm{T})$ is Fredholm.
\end{corollary}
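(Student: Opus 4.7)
The plan is to mirror the structure of the proof of Corollary \ref{Cor1}, replacing Taylor invertibility arguments on $\mathcal{H}$ with Taylor invertibility arguments at the Calkin algebra level. I would use Theorems \ref{EthmP} and \ref{EthmrP} as black boxes, together with the fundamental fact that $\bm{T}$ is Fredholm if and only if the $d$-tuple $L_{\pi(\bm{T})}$ of left multiplication operators on $\QH$ is Taylor invertible (and similarly for $\Delta_t(\bm{T})$).

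For the forward direction, I would argue as follows. Suppose $\bm{T}$ is Fredholm. Then, in particular, $\bm{T}$ is left Fredholm, so by Theorem \ref{EthmP} the positive operator $P$ is Fredholm. Hence $\pi(P)$ is invertible in $\QH$, and consequently so is $\pi(P^t)$ for every $t \in (0,1]$. This ensures that the co-chain homomorphism $\varphi_t: K(L_{\pi(\bm{T})},\QH) \to K(L_{\pi(\Delta_t(\bm{T}))},\QH)$ induced by $L_{\pi(P^t)}$ is an isomorphism of Koszul complexes. Since $K(L_{\pi(\bm{T})},\QH)$ is exact by the Fredholmness of $\bm{T}$, so is $K(L_{\pi(\Delta_t(\bm{T}))},\QH)$; that is, $\Delta_t(\bm{T})$ is Fredholm.

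For the reverse direction, assume that $\Delta_t(\bm{T})$ is Fredholm. Then $\Delta_t(\bm{T})$ is right Fredholm, and Theorem \ref{EthmrP} yields the Fredholmness of $P$ (this is where the assumption $t>0$ is needed, exactly as in the proof of Corollary \ref{Cor1}). Consequently $\pi(P^t)$ is invertible in $\QH$, and the same isomorphism $\varphi_t$ of Calkin algebra Koszul complexes produces the exactness of $K(L_{\pi(\bm{T})},\QH)$ from the exactness of $K(L_{\pi(\Delta_t(\bm{T}))},\QH)$. Thus $\bm{T}$ is Fredholm.

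I do not anticipate a genuine obstacle: once the Fredholm analogues (Theorems \ref{EthmP} and \ref{EthmrP}) are available, the argument is a verbatim Calkin algebra translation of Corollary \ref{Cor1}. The only conceptual point to emphasize is that both directions need Fredholmness of $P$, but one can only conclude this from Theorem \ref{EthmP} when starting from left Fredholmness, and from Theorem \ref{EthmrP} when starting from right Fredholmness — so the asymmetry of those two theorems is hidden here precisely because a Fredholm tuple is simultaneously left and right Fredholm, while the Koszul complex isomorphism induced by the Fredholmness of $P$ is symmetric in $\bm{T}$ and $\Delta_t(\bm{T})$.
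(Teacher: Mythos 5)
Your proposal is correct and is exactly the argument the paper intends: the paper omits the proof of Corollary \ref{Cor4}, but its pattern is the verbatim Calkin-algebra translation of the proof of Corollary \ref{Cor1}, using Theorem \ref{EthmP} for the forward direction and Theorem \ref{EthmrP} for the converse, with the isomorphism $\varphi_t$ induced by $L_{\pi(P^t)}$ playing the role of $\Phi_t$. Your closing remark about why the asymmetry of the two theorems is harmless here is also the right observation (and note that the implication you actually extract from Theorem \ref{EthmP}, namely left Fredholm $\Rightarrow$ $P$ Fredholm, holds for all $t$ even though that theorem is stated for $0<t<1$).
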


\subsection{Taylor invertibility when $\bm{\lambda} \ne \bm{0}$} \label{Taylor2}
\ We now consider the various forms of invertibility of $\bT-\bm{\lambda}$ when $\bm{\lambda} \ne \bm{0}$. \ From page \pageref{crisscross}, recall that two (not necessarily commuting) $d$--tuples $\bm{A}$ and $\bm{B}$ criss--cross commute if $A_iB_jA_k=A_kB_jA_i$ and $B_iA_jB_k=B_kA_jB_i$ for all $i,j,k=1,\cdots,d$. \ Recall also that, under this condition, both $\bm{AB}$ and $\bm{BA}$ are commuting. 

\begin{lemma} \label{BenZe}
(cf. \cite{BenZe1,BenZe2}) \ Assume that $\bm{A}$ and $\bm{B}$ criss--cross commute. \ Then 
$$
\sigma_T(\bm{BA}) \setminus \{\bm{0}\} =\sigma_T(\bm{AB}) \setminus \{\bm{0}\}.
$$
\end{lemma}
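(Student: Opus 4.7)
The plan is to reduce the lemma to showing, for each fixed $\bm{\lambda}\neq\bm{0}$, that the Koszul complexes $K(\bm{AB}-\bm{\lambda},\mathcal{H})$ and $K(\bm{BA}-\bm{\lambda},\mathcal{H})$ are simultaneously exact. Because the criss--cross hypothesis is symmetric in $(\bm{A},\bm{B})$, it suffices to establish one of the two set-theoretic inclusions; by the translation in Definition \ref{def21}, this amounts to transferring exactness of one Koszul complex to the other. Note that both complexes are well defined, since criss--cross commutativity makes $\bm{AB}$ and $\bm{BA}$ commuting $d$--tuples, as remarked on page \pageref{ABBA}.

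First I would revisit the single-operator case $d=1$, where the criss--cross condition is vacuous. The classical identity
\[
(BA-\lambda)^{-1}=\lambda^{-1}\bigl(B(AB-\lambda)^{-1}A-I\bigr)\qquad(\lambda\neq0)
\]
can be restated cochain-theoretically: the pairs $\Psi_A:=(A,A)$ and $\Psi_B:=(B,B)$ are genuine cochain maps between $K(BA-\lambda,\mathcal{H})$ and $K(AB-\lambda,\mathcal{H})$ and back, their compositions equal $(BA,BA)=\lambda\,\mathrm{id}+(\text{Koszul differential})$, and hence they induce isomorphisms in cohomology precisely when $\lambda\neq 0$. This is the template I would lift to the multivariable setting, and it makes clear \emph{where} the hypothesis $\bm{\lambda}\neq\bm{0}$ must be used.

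For general $d$, I would realize both Koszul complexes in their exterior-algebra form $\mathcal{H}\otimes\Lambda^{\bullet}[\xi_{1},\ldots,\xi_{d}]$ and construct cochain maps
\[
\Psi_{A}^{\bullet}:K(\bm{BA}-\bm{\lambda},\mathcal{H})\longrightarrow K(\bm{AB}-\bm{\lambda},\mathcal{H}),\qquad
\Psi_{B}^{\bullet}:K(\bm{AB}-\bm{\lambda},\mathcal{H})\longrightarrow K(\bm{BA}-\bm{\lambda},\mathcal{H})
\]
obtained by lifting the $A_{i}$'s and $B_{i}$'s, diagonally or block-diagonally, through the exterior-algebra factor. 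Verifying that these truly commute with the Koszul coboundaries would rest squarely on the criss--cross identities $A_{i}B_{j}A_{k}=A_{k}B_{j}A_{i}$ and $B_{i}A_{j}B_{k}=B_{k}A_{j}B_{i}$, which are exactly the substitutes for the ordinary commutativity that enters the usual chain-map verifications. I would then show that the composition $\Psi_{B}^{\bullet}\circ\Psi_{A}^{\bullet}$ differs from a scalar-valued map by the Koszul coboundary of $\bm{BA}-\bm{\lambda}$, so that it induces multiplication by a nonzero combination of the $\lambda_{i}$'s on cohomology, and hence an isomorphism; the same argument with the roles of $\bm{A}$ and $\bm{B}$ reversed handles $\Psi_{A}^{\bullet}\circ\Psi_{B}^{\bullet}$. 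The desired quasi-isomorphism follows.

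The main obstacle I anticipate is the combinatorial and sign bookkeeping on the exterior-algebra side: one must verify the cochain-map relations and the cochain-homotopy identities using \emph{only} the criss--cross identities, never appealing to commutativity among the $A_i$'s or among the $B_j$'s (neither of which is assumed). Concretely, after writing the compositions $\Psi_B^\bullet \circ \Psi_A^\bullet$ in exterior-algebra coordinates, the off-diagonal terms must be made to cancel against each other by repeated use of $A_iB_jA_k = A_kB_jA_i$, in a manner reminiscent of the proof that $\bm{AB}$ and $\bm{BA}$ are commuting (as done for $\Delta_t(\bm{T})$ via identity (\ref{v1pv2A})). Once this chain-level algebra is in place, invoking $\bm{\lambda}\neq\bm{0}$ upgrades the cochain identity to a genuine quasi-isomorphism, and translating back through Definition \ref{def21} yields $\sigma_T(\bm{BA})\setminus\{\bm{0}\}=\sigma_T(\bm{AB})\setminus\{\bm{0}\}$.
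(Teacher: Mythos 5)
Your outline is essentially correct, but note that the paper does not prove Lemma \ref{BenZe} at all: it is quoted from Benhida--Zerouali \cite{BenZe1,BenZe2}, so what you have written is a reconstruction of the argument behind the citation rather than an alternative to anything in this paper. Your skeleton (cochain maps in both directions whose compositions act on cohomology as multiplication by a nonzero scalar built from $\bm{\lambda}$) is the right one, and the sign bookkeeping you worry about largely evaporates once you make the construction concrete: fix a single index $j$ with $\lambda_j\neq 0$ and take $\Psi_A:=A_j\otimes\mathrm{id}$ and $\Psi_B:=B_j\otimes\mathrm{id}$ on $\mathcal{H}\otimes\Lambda^{\bullet}$, acting as the \emph{same} operator in every degree. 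The chain-map identity reduces termwise to $A_j(B_iA_i-\lambda_i)=(A_iB_i-\lambda_i)A_j$, which is exactly the criss--cross relation $A_jB_iA_i=A_iB_iA_j$ (and its mirror for $\Psi_B$); no exterior-algebra signs enter. The composition is $B_jA_j\otimes\mathrm{id}=(B_jA_j-\lambda_j)\otimes\mathrm{id}+\lambda_j\,\mathrm{id}$, and the first summand is null-homotopic via the homotopy $\mathrm{id}\otimes I_j$ (contraction with the $j$-th generator), since $E_iI_j+I_jE_i=\delta_{ij}$; this uses no commutativity among the $A_i$'s or $B_i$'s. Hence $\Psi_B\circ\Psi_A$ induces $\lambda_j\cdot\mathrm{id}$ on $H^{\ast}(K(\bm{BA}-\bm{\lambda},\mathcal{H}))$ while factoring through $H^{\ast}(K(\bm{AB}-\bm{\lambda},\mathcal{H}))$, so exactness of one complex forces exactness of the other when $\lambda_j\neq 0$; symmetry in $\bm{A}$ and $\bm{B}$ finishes the proof. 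One caution: do not try to sum over all indices (e.g.\ $\Psi_A=\sum_i c_iA_i\otimes\mathrm{id}$), since the cross terms $B_kA_j$ with $j\neq k$ in the composition are not null-homotopic in general; the single-index choice is what makes the homotopy argument close.
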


\subsection{Fredholmness when $\bm{\lambda} \ne \bm{0}$} \label{Fredholm2}
Here we need the analogue of Lemma \ref{BenZe} for the Taylor essential spectrum.

\begin{lemma} \label{Li100}
(cf. \cite{BenZe1,BenZe2,Li1,Li2}) Assume that $\bm{A}$ and $\bm{B}$ criss--cross commute. \ Then 
$$
\sigma_{Te}(\bm{BA}) \setminus \{\bm{0}\} =\sigma_{Te}(\bm{AB}) \setminus \{\bm{0}\}.
$$
Moreover, for $\bm{0} \ne \bm{\lambda} \notin \sigma_{Te}(\bT)$, one has 
$$
\index(\dbT-\bm{\lambda})=\index(\bT-\bm{\lambda}).
$$
\end{lemma}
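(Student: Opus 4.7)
The plan is to reduce the essential spectrum statement to Lemma \ref{BenZe} by passing to the Calkin algebra, and then to upgrade the argument at the chain level to recover the index equality.

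For the first assertion, I would use the standard characterization $\sigma_{Te}(\bm{T}) = \sigma_T(L_{\pi(\bm{T})})$ already invoked in the paper, where $L_{\pi(\bm{T})}$ is the $d$-tuple of left multiplications on the Calkin algebra $\mathcal{Q}(\mathcal{H})$ by the cosets $\pi(T_i)$. The criss-cross relations $A_iB_jA_k = A_kB_jA_i$ and $B_iA_jB_k = B_kA_jB_i$ are purely algebraic, so they descend through the quotient map $\pi:\mathcal{B}(\mathcal{H}) \to \mathcal{Q}(\mathcal{H})$ and, because $L$ is a unital homomorphism, to the left-multiplication tuples. Noting that $L_{\pi(\bm{AB})} = L_{\pi(\bm{A})}L_{\pi(\bm{B})}$ and $L_{\pi(\bm{BA})} = L_{\pi(\bm{B})}L_{\pi(\bm{A})}$ coordinatewise, Lemma \ref{BenZe} (in its form valid for $d$-tuples of elements of a Banach algebra, as developed in \cite{BenZe1,BenZe2,Li1,Li2}) applied to the pair $L_{\pi(\bm{A})}, L_{\pi(\bm{B})}$ yields
\begin{equation*}
\sigma_T(L_{\pi(\bm{BA})}) \setminus \{\bm{0}\} = \sigma_T(L_{\pi(\bm{AB})}) \setminus \{\bm{0}\},
\end{equation*}
which is exactly the Taylor essential spectrum equality.

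For the index equality, I would return to the Koszul complexes themselves. The chain-level content of the proof of Lemma \ref{BenZe} produces, for each $\bm{\lambda} \ne \bm{0}$, an explicit cochain map
\begin{equation*}
\Psi_{\bm{\lambda}}:K(\bm{AB}-\bm{\lambda},\mathcal{H}) \longrightarrow K(\bm{BA}-\bm{\lambda},\mathcal{H}),
\end{equation*}
whose components are built from products of $A_i$'s, $B_j$'s, and powers of $\bm{\lambda}$, and which is a quasi-isomorphism at every stage. When $\bm{\lambda} \notin \sigma_{Te}(\bm{AB})$, both complexes have finite-dimensional cohomology at every stage, so $\Psi_{\bm{\lambda}}$ induces isomorphisms between the corresponding cohomology groups. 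Since the Fredholm index is the Euler characteristic $\sum_{k=0}^{d}(-1)^k \dim H^k$, equality of these dimensions stage by stage forces $\index(\bm{BA}-\bm{\lambda})=\index(\bm{AB}-\bm{\lambda})$, which is the required identity for $\bm{AB}=\bT$ and $\bm{BA}=\dbT$.

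The principal obstacle is the explicit construction and verification of $\Psi_{\bm{\lambda}}$: because criss-cross commutativity is strictly weaker than full commutativity between $\bm{A}$ and $\bm{B}$, one cannot simply permute factors but must carefully symmetrize using the identities $A_iB_jA_k = A_kB_jA_i$ and their $\bm{B}$-counterparts at each intermediate stage of the complex. In dimension $d=1$ this reduces to the classical identity behind the single-variable Aluthge-type result already used to justify Lemma \ref{BenZe}; for $d>1$ the analogue must be assembled stage by stage, and this is the real technical work we would either extract verbatim from \cite{BenZe1,BenZe2,Li1,Li2} or reprove by induction on the homological degree, using the mapping-cone device implicit in those references.
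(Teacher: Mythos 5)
Your outline is correct and matches the paper's treatment: the paper states Lemma \ref{Li100} without proof, deferring to \cite{BenZe1,BenZe2,Li1,Li2}, and later (in the proof of Theorem \ref{Taylorsp2}) identifies the underlying mechanism as precisely the isomorphism of homology spaces of the two Koszul complexes away from $\bm{0}$ that you describe, while your Calkin-algebra reduction of $\sigma_{Te}$ to $\sigma_T(L_{\pi(\bigcdot)})$ is the same device the paper uses elsewhere for Fredholmness. You correctly locate the only real technical content in the construction of the chain-level quasi-isomorphism from criss-cross commutativity, which both you and the paper take from the cited references rather than reprove.
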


\subsection{The Spectral Picture of $\bT$ and $\dbT$} \ We are now ready to prove Theorems \ref{Taylorsp}, \ref{Tayloressentialsp} and \ref{othersp}, which we will restate as a single result, encompassing the various spectral systems.

\begin{theorem} \label{Taylorsp2}
Let $\bm{T}=\bm{(}T_{1},\cdots,T_{d})$ be a commuting $d$--tuple of operators on $\mathcal{H}$, and let $0 < t \leq 1$. \ Then
\begin{itemize}
\item[(\underline{Taylor})]
\begin{equation*}
\sigma _{T}(\Delta _{t}\left( \bm{T}\right))=\sigma _{T}(\bm{T}).\medskip
\end{equation*}
As a consequence, we have 
$$
r(\Delta _{t}\left( \bm{T}\right))=r_{T}\left( \bm{T}\right),
$$
where $r_T$ denotes the joint spectral radius.

\begin{changemargin}{-0.94cm}{0cm}
\item[(\underline{Taylor ess.})]
\begin{equation*}
\sigma _{Te}(\Delta _{t}\left( \bm{T}\right) )=\sigma _{Te}(\bm{T}).\medskip
\end{equation*}
\item[($\underline{\index}$)] \ The Fredholm index satisfies
$$
\index (\Delta _{t}(\bm{T})-\bm{\lambda})=index (\bm{T}-\bm{\lambda}),
$$
for all $\bm{\lambda} \notin \sigma _{Te}(\bm{T})$. \medskip

\item[(\underline{Other $\sigma$'s}): (i)] \ $\sigma _{p}(\Delta _{t}\left( \bm{T}\right) )=\sigma _{p}(\bm{T})$.
\item[(ii)] \ $\sigma _{\ell}(\Delta _{t}\left( \bm{T}\right) )=\sigma _{\ell}(\bm{T})$.
\item[(iii)] \ $\sigma _{H}(\Delta _{t}\left( \bm{T}\right) )=\sigma _{H}(\bm{T})$.
\item[(iv)] \ Assume $t>0$. \ Then $\sigma _{r}(\bm{T}) \subseteq \sigma _{r}(\Delta _{t}( \bm{T})) \subseteq \sigma_r(\bm{T}) \cup \{0\}$.
\item[(v)] \ More generally, if 
$\sigma_{\pi,k}$ and $\sigma_{\delta,k}$ denote the S\l odkowski's spectral systems, we have
$$
\sigma _{\pi,k}(\Delta _{t}\left( \bm{T}\right) )=\sigma _{\pi,k}(\bm{T}) \; (k=0,\dots,d)
$$
and 
$$
\sigma_{\delta,k}(\bm{T}) \subseteq \sigma _{\delta,k}(\Delta _{t}\left( \bm{T}\right) )\subseteq \sigma _{\delta,k}(\bm{T}) \cup \{0\} \; (k=0,\cdots,d).
$$
\end{changemargin}

\end{itemize}
\end{theorem}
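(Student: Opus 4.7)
The plan is to prove each clause by splitting the analysis at each spectral system into the disjoint cases $\bm{\lambda}=\bm{0}$ and $\bm{\lambda}\neq\bm{0}$, and then assembling the pieces from Subsections \ref{Taylor}--\ref{Fredholm2}. At $\bm{\lambda}=\bm{0}$ the previously proved corollaries do almost all of the work: Corollary \ref{Cor1} handles the Taylor case, Corollary \ref{Cor2} handles Harte, Corollary \ref{Cor3}(i) handles $\sigma_{\pi,k}$ (and in particular $\sigma_\ell=\sigma_{\pi,0}$), Corollary \ref{Cor3}(ii) handles $\sigma_{\delta,k}$ (and $\sigma_r=\sigma_{\delta,0}$) when $t<1$, and Corollary \ref{Cor4} handles Fredholmness. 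At $t=1$, only the one-sided implication ``$\bm{0}\notin\sigma_{\delta,k}(\dbT)\Rightarrow\bm{0}\notin\sigma_{\delta,k}(\bm{T})$'' survives (from Theorem \ref{thmrP} and Remark \ref{remEP}), which is exactly what underlies the asymmetric containment in (iv) and the right half of (v); Example \ref{ex14} shows the reverse implication can genuinely fail.

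For $\bm{\lambda}\neq\bm{0}$, I would invoke the criss--cross construction on page \pageref{ABBA}: set $\bm{A}:=(V_1P^{1-t},\cdots,V_dP^{1-t})$ and $\bm{B}:=(P^t,\cdots,P^t)$, so that $\bm{A}\bm{B}=\bm{T}$ and $\bm{B}\bm{A}=\Delta_t(\bm{T})$. The identities $A_iB_jA_k=V_iPV_kP^{1-t}=V_kPV_iP^{1-t}=A_kB_jA_i$ and $B_iA_jB_k=P^tV_jP=B_kA_jB_i$ follow immediately from (\ref{v1pv2A}), so $\bm{A}$ and $\bm{B}$ criss--cross commute. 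Lemma \ref{BenZe} then gives $\sigma_T(\bm{T})\setminus\{\bm{0}\}=\sigma_T(\dbT)\setminus\{\bm{0}\}$ and Lemma \ref{Li100} gives the corresponding off-origin statement for $\sigma_{Te}$ together with the off-origin index equality. The same criss--cross principle, applied stage by stage to the Koszul complexes of $\bm{AB}-\bm{\lambda}$ and $\bm{BA}-\bm{\lambda}$ along the lines of the Benhida--Zerouali and Li references, extends to $\sigma_p$, $\sigma_\ell$, $\sigma_r$, and to the full S\l odkowski chains $\sigma_{\pi,k},\sigma_{\delta,k}$, yielding equality of all of these systems off the origin.

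Merging the two cases produces every equality and containment in (\underline{Taylor}), (\underline{Taylor ess.}), and (i)--(v); the only loose end is the index equality at $\bm{\lambda}=\bm{0}$ when $\bm{0}\notin\sigma_{Te}(\bm{T})$ (equivalently, by Corollary \ref{Cor4}, when $\bm{0}\notin\sigma_{Te}(\dbT)$). For this I would use openness of the Fredholm domain and local constancy of the Fredholm index. Since $\sigma_{Te}(\bm{T})$ is closed in $\mathbb{C}^d$, its complement contains a punctured neighborhood of $\bm{0}$ lying entirely in the common Fredholm domain of $\bm{T}$ and $\Delta_t(\bm{T})$; choose any sequence $\bm{\lambda}_n\to\bm{0}$ inside that neighborhood with $\bm{\lambda}_n\neq\bm{0}$. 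Lemma \ref{Li100} gives $\mathrm{index}(\bm{T}-\bm{\lambda}_n)=\mathrm{index}(\dbT-\bm{\lambda}_n)$ for every $n$, and local constancy of the index forces both sides to agree with their respective values at $\bm{0}$ for large $n$. The formula $r(\dbT)=r_T(\bm{T})$ then drops out at once from the Taylor spectrum equality and the definition (\ref{spra}).

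The part I expect to be the main obstacle is the second paragraph: the criss--cross statements I can quote verbatim (Lemmas \ref{BenZe} and \ref{Li100}) cover only the Taylor and Taylor essential spectra, so extending them to $\sigma_{\pi,k}$ and $\sigma_{\delta,k}$ for $\bm{\lambda}\neq\bm{0}$ requires tracing the homological argument stage by stage and verifying that exactness at the first (respectively last) $k+1$ positions together with closedness of the relevant boundary map is genuinely preserved under the passage from $\bm{AB}-\bm{\lambda}$ to $\bm{BA}-\bm{\lambda}$. This is essentially homological bookkeeping, but it has to be executed carefully before the clean S\l odkowski statements in (v) can be asserted.
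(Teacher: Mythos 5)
Your proposal follows the paper's proof almost step for step: the same split into $\bm{\lambda}=\bm{0}$ versus $\bm{\lambda}\neq\bm{0}$, the same use of Corollaries \ref{Cor1}--\ref{Cor4} at the origin, the same criss--cross factorization $\bm{T}=\bm{AB}$, $\Delta_t(\bm{T})=\bm{BA}$ with Lemmas \ref{BenZe} and \ref{Li100} off the origin, and the same local-constancy argument to push the index equality to $\bm{\lambda}=\bm{0}$. The loose end you flag at the end (extending the criss--cross invariance to the S\l odkowski systems stage by stage) is exactly the point the paper also treats by citation rather than by a written-out homological argument, so you are not missing anything the paper supplies.

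Two small items to tidy up. First, none of the corollaries you list covers the point spectrum at $\bm{\lambda}=\bm{0}$; the paper closes this with a direct kernel argument from $\ker T_1\cap\cdots\cap\ker T_d=\ker P=\ker V_1\cap\cdots\cap\ker V_d$ (if $T_ix=0$ for all $i$ then $Px=0$ and $V_ix=0$, hence $P^tV_iP^{1-t}x=0$, and conversely), and you should include something of this sort since the criss--cross lemmas only speak off the origin. Second, be careful with your framing that the one-sided implication for $\sigma_{\delta,k}$ at the origin ``survives only at $t=1$'': Example \ref{ex14} exhibits a proper inclusion $\sigma_r(\bm{T})\subsetneq\sigma_r(\Delta_t(\bm{T}))$ for \emph{every} $t\in\left(0,1\right]$, so the asymmetry is present for all $t>0$ and the safe tool at the origin for the $\delta$-systems is the single implication of Remark \ref{remEP} (right invertibility of $\Delta_t(\bm{T})$ forces $P$ invertible), not a two-sided equivalence for $t<1$. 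Since the theorem only claims the containment $\sigma_{\delta,k}(\bm{T})\subseteq\sigma_{\delta,k}(\Delta_t(\bm{T}))\subseteq\sigma_{\delta,k}(\bm{T})\cup\{\bm{0}\}$, that one direction is all you need, and with these two adjustments your argument coincides with the paper's.
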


\begin{proof}
Let $\bm{\lambda} \in \sigma _{T}(\Delta _{t}\left( \bm{T}\right))$. \ If $\bm{\lambda} = \bm{0}$, we know that $\dbT$ is not Taylor invertible. \ By Corollary \ref{Cor1}, $\bT$ is not Taylor invertible, and therefore $\bm{\lambda} \in \sigma_T(\bT)$. \ Similarly, $\bm{0} \in \sigma_T(\bT) \Rightarrow \bm{0} \in \sigma_T(\dbT)$.

Assume now that $\bm{\lambda} \ne \bm{0}$, and recall that $\bT=\bm{AB}$ and $\dbT=\bm{BA}$ for $\bm{A}:=(V_1P^{1-t},\cdots,V_dP^{1-t})$ and $\bm{B}:=(P^t,\cdots,P^t)$ (cf. page \pageref{ABBA}). \ By Lemma \ref{BenZe}, the Taylor invertibility of $\dbT-\bm{\lambda}$ is equivalent to the Taylor invertibility of $\bT-\bm{\lambda}$. \ This completes the proof of the equality of $\sigma_T(\dbT)$ and $\sigma_T(\bT)$.

To deal with the Taylor essential spectra, we can adapt the above arguments, replacing Taylor invertibility by Fredholmness, Corollary \ref{Cor1} by Corollary \ref{Cor4}, and Lemma \ref{BenZe} by Lemma \ref{Li100}, respectively. \ In terms of the Fredholm index, the results in \cite{Li1} are based on an isomorphism between the homology spaces of $\KdT$ and $\KT$, which guarantees the equality of the Fredholm indices away from $\bm{0}$. \ However, the Fredholm index is continuous on the complement in $\mathbb{C}^d$ of $\sigma_T(\bT)$ \cite{Cu2,Appl}, and integer-valued, and therefore $\index (\Delta _{t}(\bm{T})-\bm{\lambda})=\index (\bm{T}-\bm{\lambda})$ must also hold whenever $\bm{0} \notin \sigma_T(\bT)$.

As for other spectral systems, we can use Remarks \ref{remP} and \ref{remEP} together with the fact that the results in \cite{BenZe1,BenZe2,Li1,Li2} are based on isomorphisms of the Koszul complexes, or of the related homology spaces. \ However, for the case of the point spectrum $\sigma_p$ and $\bm{\lambda}=\bm{0}$, we can simply invoke the condition $\ker T_1 \cap \cdots \cap \ker T_d = \ker P = \ker V_1 \cap \cdots \cap \ker V_d$. \ For instance, if $0 \ne x \in \mathcal{H}$ and $T_1x= \cdots =T_dx=0$, then $Px=0$ and $V_1x= \cdots =V_dx=0$, which readily implies that $P^tV_1P^{1-t}x=\cdots =P^tV_dP^{1-t}x=0$. \ As a result, $\sigma_p(\bT) \subseteq \sigma_p(\dbT)$. \ Conversely, if $P^tV_1P^{1-t}x=\cdots =P^tV_dP^{1-t}x=0$ for some $0 \ne x \in \mathcal{H}$, then $P$ must not be one-to-one (otherwise we get a contradiction) and therefore $T_1x= \cdots =T_dx=0$. 
\end{proof}

We conclude this Section with a simple application of Theorem \ref{Taylorsp2}.

\begin{corollary}
Let $\bm{T}=\bm{(}T_{1},\cdots,T_{d})$ be a commuting $d$--tuple of operators on $\mathcal{H}$, and let $0 < t < 1$. \ Assume that $\bm{0} \in \sigma_r(\dbT) \setminus \sigma_r(\bT)$. \ Then $\bm{0} \in \sigma_{\ell}(\bT) \cap \sigma_{\ell}(\dbT)$.
\end{corollary}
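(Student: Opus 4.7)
The first move I would make is to observe that Theorem \ref{Taylorsp2}(ii) already delivers the equality $\sigma_{\ell}(\dbT)=\sigma_{\ell}(\bT)$, so it suffices to prove the single statement $\bm{0}\in\sigma_{\ell}(\bT)$; the other half of the conclusion will then come for free.

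I plan to argue by contradiction. Suppose $\bT$ is left invertible. Since $0<t<1$, Theorem \ref{thmP} then forces $P$ to be invertible, and hence $P^t$ is invertible as well. The key observation at this stage is that when $P^t$ is invertible, every vertical arrow in the co-chain homomorphism $\Phi_t \colon K(\bT,\mathcal{H}) \to K(\dbT,\mathcal{H})$ constructed on page \pageref{page5} is invertible, so $\Phi_t$ is an isomorphism of Koszul complexes. In particular, the top boundary map $D_{\bT}^{d-1}$ is onto if and only if $D_{\dbT}^{d-1}$ is onto. But by hypothesis $\bm{0}\notin\sigma_r(\bT)$, so $D_{\bT}^{d-1}$ is onto, and therefore $D_{\dbT}^{d-1}$ is onto as well, i.e., $\bm{0}\notin\sigma_r(\dbT)$. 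This contradicts the hypothesis $\bm{0}\in\sigma_r(\dbT)$, so $\bT$ cannot be left invertible, i.e., $\bm{0}\in\sigma_{\ell}(\bT)$. Applying Theorem \ref{Taylorsp2}(ii) once more yields $\bm{0}\in\sigma_{\ell}(\dbT)$, completing the argument.

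The only mildly delicate step is the claim that invertibility of $P$ combined with right invertibility of $\bT$ implies right invertibility of $\dbT$. This direction is not spelled out as a separate implication in Theorem \ref{thmrP} (only the converse is explicit there), but it follows immediately from the same $\Phi_t$-isomorphism argument used in the proof of Theorem \ref{thmP}, implication $(ii)\Rightarrow(iii)$, applied at the last stage of the complex rather than the first. No further structural ingredients are needed beyond what has already been established in Section \ref{SpecPic}.
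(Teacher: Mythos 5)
Your argument is correct, and it reaches the conclusion by a slightly different route than the paper. Both proofs share the same opening move: the hypothesis $\bm{0}\in\sigma_r(\dbT)\setminus\sigma_r(\bT)$ is incompatible with $P$ being invertible, because invertibility of $P^t$ makes the co-chain map $\Phi_t$ of page \pageref{page5} an isomorphism of Koszul complexes, which would transfer surjectivity of $D_{\bT}^{d-1}$ to $D_{\dbT}^{d-1}$ (you phrase this as a contradiction with left invertibility of $\bT$ via Theorem \ref{thmP}; the paper states it directly as ``the complexes are not isomorphic, so $P$ is not invertible''). Where you diverge is in the second half: you deduce $\bm{0}\in\sigma_{\ell}(\bT)$ from the equivalence (i)$\Leftrightarrow$(ii) of Theorem \ref{thmP} and then import $\bm{0}\in\sigma_{\ell}(\dbT)$ wholesale from the equality $\sigma_{\ell}(\dbT)=\sigma_{\ell}(\bT)$ of Theorem \ref{Taylorsp2}(ii), whereas the paper argues both memberships directly and constructively: since $P$ is positive and not invertible, there are unit vectors $x_n$ with $Px_n\rightarrow 0$, whence $\sum_i\left\Vert T_ix_n\right\Vert^2=\left\Vert Px_n\right\Vert^2\rightarrow 0$ and, because $t<1$, $P^tV_iP^{1-t}x_n\rightarrow 0$ as well. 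Your version is shorter and perfectly legitimate given that Theorem \ref{Taylorsp2}(ii) has already been established at this point in the paper; the paper's version has the virtue of being self-contained (it exhibits the approximate joint eigenvector explicitly and makes visible exactly where the hypothesis $t<1$ enters, namely in pushing $Px_n\rightarrow 0$ through the factor $P^{1-t}$). You are also right that the implication ``$P$ invertible and $\bT$ right invertible $\Rightarrow$ $\dbT$ right invertible'' is not one of the listed implications of Theorem \ref{thmrP}, but follows from the $\Phi_t$-isomorphism at the top stage of the complex; this is consistent with Example \ref{ex14}, where the failure of (i)$\Rightarrow$(iii) occurs precisely because $P=I-E_0$ is not invertible. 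No gap.
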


\begin{proof}
First recall that the two Koszul complexes for $\bT$ and $\dbT$ are isomorphic when $P$ is invertible. \ By assumption, the Koszul complexes are not isomorphic, so $P$ is not invertible. \ Let $\{x_n\}_{n=1}^{\infty}$ be a sequence of unit vectors in $\mathcal{H}$ such that $Px_n \rightarrow 0$ as $n \rightarrow \infty$. \ Since $t<1$, it follows that $P^tV_iP^{1-t}x_n \rightarrow 0$ as $n \rightarrow \infty$, for all $i=1,\cdots,d$. \ This means that $\dbT$ is not bounded below; that is, $\bm{0} \in \sigma_{\ell}(\dbT)$. \ But $\bm{0}$ is also in $\sigma_{\ell}(\bT)$, because $P$ is not bounded below. \ It follows that $\bm{0} \in \sigma_{\ell}(\bT) \cap \sigma_{\ell}(\dbT)$, as desired.
\end{proof}


\section{Joint Spectral Radius of the Generalized Spherical Aluthge Transform} \label{JointSpecRad}

In this Section we will extend K. Feki and T. Yamazaki's proof of the spectral radius formula for the spherical Aluthge transform in $d$--variables ($d>1$ and $t=\frac{1}{2}$) \cite{FeYa} to the case of the generalized spherical Aluthge transform ($d>1$ and $0<t<1$). \ First, we present a simple example that shows that the spectral radius formula (\ref{eq17}) cannot be extended to the case of $t=1$, even for $d=1$.

\begin{example} \label{ex67}
Consider an operator $T \in \mathcal{B}(\mathcal{H})$ with polar decomposition $T \equiv VP$, where the partial isometry $V$ satisfies the algebraic equation $V^2=-I$. \ It is clear that $V$ must be unitary, and $V^*=-V$. Recall that, in general, the polar decomposition of the adjoint of $T$ is given by $T^* \equiv V^*Q$, where $Q:=\sqrt{TT^*}$. \ Thus, in the case at hand, we must have $Q=-VPV$ and $T^*=-PV$. \ It follows that 
$$
\Delta_1(T)=PV=-T^*,
$$
and therefore 
$$
\Delta_1(\Delta_1(T))=\Delta_1(-T^*)=\Delta_1(-V^*Q)=-QV^*=-(-VPV)(-V)=-VPV^2=VP=T.
$$
We conclude that
\begin{equation*}
\Delta _{1}^{(n)}(T)=\left\{
\begin{tabular}{ll}
$T$, & $n$ is even \\
$-T^*$, & $n$ is odd.
\end{tabular}%
\right.
\end{equation*}
As a consequence, $\left\|\Delta_1^{(n)}(T)\right\|=\left\|T\right\|$ for all $n \ge 1$, which implies that $\lim_{n \rightarrow\infty}\left\|\Delta_1^{(n)}(T)\right\|=\left\|T\right\|$. \ On the other hand, within this collection of operators, $r(T)$ may be smaller than $\left\|T\right\|$, and therefore, $r(T) \ne \lim_{n \rightarrow\infty}\left\|\Delta_1^{(n)}(T)\right\|$. \ For a concrete example where $r(T) < \left\|T\right\|$, let $\mathcal{H}:=\mathbb{C}^2$, let $1< k \in \mathbb{R}$ and let
\begin{equation*}
T_k :=\left(
\begin{array}{cc}
1 & k \\
-k & -1%
\end{array}%
\right) ,
\end{equation*}
with polar decomposition
\begin{equation*}
T_k \equiv VP =\left(
\begin{array}{cc}
0 & 1 \\
-1 & 0
\end{array}
\right) \left(
\begin{array}{cc}
k & 1 \\
1 & k
\end{array}
\right) . \qed
\end{equation*}
\end{example}

As we mentioned in the Introduction, T. Yamazaki proved, for single operators, that $r(T)=\lim_n\left\|\Delta_{1/2}^{(n)}(T)\right\|$. \ Subsequently, T.Y. Tam \cite{Tam}, building on D. Wang's \cite{Wang} simplified proof of Yamazaki's result, extended the spectral radius formula to the case of $0 <t<1$, under the assumption that $T$ is invertible. \ Very recently, K. Feki and T. Yamazaki have extended these results to the case of $d>1$ and $t=\frac{1}{2}$ \cite{FeYa}. \ In what follows, we will extend the spectral radius formula to all commuting $d$--tuples and all $0<t<1$.   

First, we need to record three operator inequalities that can be traced back to the work of E. Heinz \cite{Hei} and A. McIntosh \cite{McI}; for two of them, the formulation below is taken from \cite{Kit}.

\begin{lemma}
\label{lem5} \cite{McI} \ Let $\mathcal{H}$ be a complex Hilbert space and let
$A,B,X\in \mathcal{B}(\mathcal{H})$. \ Then,
\begin{equation}
\left\Vert A^{\ast }XB\right\Vert \leq \left\Vert AA^{\ast }X\right\Vert ^{%
\frac{1}{2}}\left\Vert XBB^{\ast }\right\Vert ^{\frac{1}{2}}.  \label{Ineq1}
\end{equation}%
\medskip
\end{lemma}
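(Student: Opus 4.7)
The plan is to reduce the McIntosh inequality to a Heinz-type estimate for the positive factors $(AA^{*})^{1/2}$ and $(BB^{*})^{1/2}$, and then to prove that estimate by a spectral-radius argument.

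First I would invoke the polar decompositions $A^{*}=U_{1}(AA^{*})^{1/2}$ and $B=(BB^{*})^{1/2}U_{2}$, where $U_{1}$ and $U_{2}$ are partial isometries of norm at most one. This lets me factor
\[
A^{*}XB = U_{1}(AA^{*})^{1/2} X (BB^{*})^{1/2} U_{2},
\]
so $\Vert A^{*}XB\Vert \le \Vert (AA^{*})^{1/2} X (BB^{*})^{1/2}\Vert$, and the problem reduces to proving the auxiliary inequality
\[
\Vert P^{1/2} X Q^{1/2}\Vert^{2} \le \Vert PX\Vert\, \Vert XQ\Vert
\]
for arbitrary positive operators $P,Q$ and any $X\in \mathcal{B}(\mathcal{H})$; specializing to $P=AA^{*}$ and $Q=BB^{*}$ then gives the claim.

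Next I would prove the auxiliary inequality by a short chain of identities. Using $\Vert T\Vert^{2}=\Vert TT^{*}\Vert$, the square of the left-hand side equals $\Vert P^{1/2} X Q X^{*} P^{1/2}\Vert$. Since the operator $P^{1/2} X Q X^{*} P^{1/2}$ is positive, its norm coincides with its spectral radius, and the cyclicity property $r(ST)=r(TS)$ converts this to $r(XQX^{*}P)$, which is bounded above by $\Vert XQX^{*}P\Vert \le \Vert XQ\Vert\,\Vert X^{*}P\Vert = \Vert XQ\Vert\,\Vert PX\Vert$, as required.

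The main obstacle is conceptual rather than computational: one has to recognize that the cyclicity of the spectral radius, together with the coincidence of norm and spectral radius for positive operators, is the device that turns the apparently symmetric quantity $\Vert P^{1/2} X Q X^{*} P^{1/2}\Vert$ into the asymmetric product $\Vert PX\Vert\,\Vert XQ\Vert$. Without this observation one obtains only the trivial bound $\Vert A^{*}XB\Vert \le \Vert A\Vert\,\Vert X\Vert\,\Vert B\Vert$, which is strictly weaker than the claim. Once the spectral-radius identity is in hand, the remaining steps are routine bookkeeping with the polar decompositions and submultiplicativity of the operator norm.
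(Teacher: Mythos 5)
Your argument is correct, and it is worth noting that the paper itself offers no proof of this lemma at all: inequality (\ref{Ineq1}) is simply quoted from McIntosh's report \cite{McI}, so your write-up supplies a genuinely self-contained derivation rather than a variant of an argument in the text. Every step checks out: the polar decompositions $A^{*}=U_{1}(AA^{*})^{1/2}$ and $B=(BB^{*})^{1/2}U_{2}$ are the correct ones (with $|A^{*}|=(AA^{*})^{1/2}$ and $|B^{*}|=(BB^{*})^{1/2}$), the $C^{*}$-identity gives $\Vert P^{1/2}XQ^{1/2}\Vert^{2}=\Vert P^{1/2}XQX^{*}P^{1/2}\Vert$, positivity lets you pass to the spectral radius, cyclicity moves the outer $P^{1/2}$ inside, and $\Vert X^{*}P\Vert=\Vert PX\Vert$ because $P$ is self-adjoint. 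The only comment I would make is that the detour through the square roots $P^{1/2},Q^{1/2}$ and the partial isometries is avoidable: applying the same spectral-radius trick directly to
\begin{equation*}
\Vert A^{*}XB\Vert^{2}=\Vert B^{*}X^{*}AA^{*}XB\Vert=r\bigl(B^{*}X^{*}AA^{*}XB\bigr)=r\bigl(X^{*}AA^{*}XBB^{*}\bigr)\leq\Vert AA^{*}X\Vert\,\Vert XBB^{*}\Vert
\end{equation*}
yields the inequality in one line, which is essentially McIntosh's original argument. Your version proves slightly more along the way (the Heinz-type estimate $\Vert P^{1/2}XQ^{1/2}\Vert^{2}\leq\Vert PX\Vert\,\Vert XQ\Vert$ for arbitrary positive $P,Q$, which is the $t=\tfrac12$ case of (\ref{Ineq3})), at the cost of a little extra bookkeeping.
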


\begin{lemma}
\label{lem6} (\cite{Hei},\cite[Theorems 1 and 2]{Kit}) \ Let $\mathcal{H}$ be a complex Hilbert space and
let $A,B,X\in \mathcal{B}(\mathcal{H})$. \ Then, for $t\in \left[ 0,1\right],
$
\begin{equation}
\left\Vert A^{t}XB^{t}\right\Vert \leq \left\Vert AXB\right\Vert
^{t}\left\Vert X\right\Vert ^{1-t}\medskip  \label{Ineq2}
\end{equation}%
and%
\begin{equation}
\left\Vert A^{t}XB^{1-t}\right\Vert \leq \left\Vert AX\right\Vert
^{t}\left\Vert XB\right\Vert ^{1-t}\medskip  \label{Ineq3}
\end{equation}
\end{lemma}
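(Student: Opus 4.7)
The plan is to prove both inequalities simultaneously via Hadamard's three-lines theorem applied to operator-valued holomorphic functions on the closed strip $\overline{S} := \{z \in \mathbb{C} : 0 \le \operatorname{Re} z \le 1\}$. The standing convention is that $A$ and $B$ are positive operators (so that $A^z := \exp(z \log A)$ makes sense via the Borel functional calculus), which is the setting under which the Heinz--Kittaneh inequalities are usually formulated. I would first assume $A, B > 0$ are invertible; the general positive case is then recovered by replacing $A \rightsquigarrow A + \epsilon I$ and $B \rightsquigarrow B + \epsilon I$ and sending $\epsilon \to 0^+$, using operator-norm continuity of $C \mapsto C^t$ on positive operators to pass to the limit on both sides.

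For \eqref{Ineq2}, I would define
\[
F: \overline{S} \to \mathcal{B}(\mathcal{H}), \quad F(z) := A^z X B^z .
\]
This $F$ is holomorphic on the interior, norm-continuous on the closure, and uniformly bounded on $\overline{S}$. The crucial observation is that on the boundary lines the factor operators are unitary: for $s \in \mathbb{R}$, the operators $A^{is} = e^{is \log A}$ and $B^{is}$ are unitary (since $\log A$ and $\log B$ are self-adjoint), so $\|F(is)\| \le \|X\|$ and $\|F(1+is)\| \le \|A X B\|$. Applying Hadamard's three-lines theorem to the scalar function $\varphi_{x,y}(z) := \langle F(z)x, y\rangle$ for arbitrary unit vectors $x, y$ gives $|\varphi_{x,y}(t)| \le \|X\|^{1-t}\|A X B\|^{t}$, and taking the supremum over $x, y$ yields \eqref{Ineq2}.

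For \eqref{Ineq3}, the same scheme applies to
\[
G: \overline{S} \to \mathcal{B}(\mathcal{H}), \quad G(z) := A^z X B^{1-z},
\]
for which $G(0) = XB$ and $G(1) = AX$; the boundary bounds become $\|G(is)\| \le \|XB\|$ and $\|G(1+is)\| \le \|AX\|$, and the three-lines theorem delivers $\|A^t X B^{1-t}\| \le \|AX\|^{t}\|XB\|^{1-t}$. The main obstacle is the verification of the three hypotheses required to invoke the three-lines theorem on $\varphi_{x,y}$: holomorphy in the open strip (which follows from the spectral integral representation $A^z = \int_{\sigma(A)} \lambda^z \, dE_\lambda$, differentiable in $z$ when $\sigma(A) \subset (0,\infty)$, hence the $A + \epsilon I$ regularization), norm-continuity on the closed strip, and uniform boundedness on the vertical lines; once these three ingredients are in place, the complex-interpolation machinery does the rest, and Lemma \ref{lem5} is not even needed for this route. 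An alternative, more elementary approach would be to iterate McIntosh's inequality \eqref{Ineq1} to obtain both inequalities at dyadic exponents $t = k/2^n$ and then pass to general $t \in [0,1]$ by continuity, but the complex-interpolation argument sketched above is conceptually cleaner.
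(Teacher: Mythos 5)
Your argument is correct, but it is worth noting that the paper does not prove this lemma at all: it is quoted from the literature (Heinz, and Kittaneh's Theorems 1 and 2), so there is no internal proof to compare against. What you supply is the standard complex-interpolation proof, and it goes through: under the (tacit, but necessary) hypothesis $A,B\geq 0$, after regularizing to $A+\epsilon I$, $B+\epsilon I$ the maps $z\mapsto A^{z}XB^{z}$ and $z\mapsto A^{z}XB^{1-z}$ are entire, uniformly bounded on the strip because $\|A^{x+iy}\|=\|A^{x}\|\leq\max(1,\|A\|)$, and have the unitary factors $A^{is}$, $B^{is}$ on the boundary lines exactly as you say; the three-lines theorem applied to $\langle F(z)x,y\rangle$ then yields both inequalities, and the passage $\epsilon\to 0^{+}$ is justified by $\|(A+\epsilon I)^{t}-A^{t}\|\leq\epsilon^{t}$. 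Two small remarks. First, the lemma as stated in the paper says only $A,B,X\in\mathcal{B}(\mathcal{H})$; you correctly identify that positivity of $A$ and $B$ must be assumed for $A^{t}$, $B^{t}$ to be defined, and your proof supplies that missing hypothesis rather than ignoring it. Second, your closing observation is apt: inequality (\ref{Ineq3}) at $t=\tfrac12$ is precisely McIntosh's inequality (\ref{Ineq1}) with $A$, $B$ replaced by their square roots, and Kittaneh's original proofs proceed essentially by iterating such special cases to dyadic exponents; your interpolation route subsumes both inequalities in one stroke and is, as you say, cleaner, at the cost of invoking vector-valued holomorphy and the three-lines theorem rather than purely Hilbert-space inequalities.
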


We will also need the multivariable analogue of the classical spectral radius formula, established by V. M\"uller and A. Soltysiak in 1992. \ For this, first recall that the joint spectral radius of a commuting $d$--tuple is given by $r(\bm{T}):=\sup \{\left\|\bm{\lambda}\right\|_2: \bm{\lambda} \in \sigma_T(\bT) \}$ (cf. \cite{Bun,MuSo,ChZe}). \ It is straightforward to see that $r(\bT) \le \left\|\bT\right\|$. \ Also, given a $d$--tuple $\bm{T}$ and an integer $k \ge 1$, we will let $\bm{T}^k$ denote the $d^k$--tuple defined inductively as follows:
\begin{eqnarray} \label{defpower}
\bm{T}^1&:=&\bm{T} \nonumber \\
\bm{T}^{2}&:=&(T_1T_1,T_1T_2,\cdots,T_1T_d,T_2T_1,T_2T_2,\cdots,T_dT_1,T_dT_2,\cdots,T_dT_d), \\
&\vdots& \nonumber \\
\bm{T}^{k+1}&:=&\bm{T}\bm{T}^k. \nonumber
\end{eqnarray}

\begin{lemma} \label{lemMuSo} \cite{MuSo} \ Let $\bT$ be a commuting $d$--tuple of Hilbert space operators. \ Then
$$
r(\bT)=\lim_{k \rightarrow \infty}\left\|\bT^k\right\|_2^{\frac{1}{k}} .
$$
\end{lemma}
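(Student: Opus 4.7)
The plan is to reduce the multivariable spectral radius formula to the classical single-operator Gelfand formula applied to the elementary operator $M_{\bT} : \mathcal{B}(\mathcal{H}) \to \mathcal{B}(\mathcal{H})$ defined by $M_{\bT}(X) := \sum_{i=1}^d T_i^* X T_i$, which has already appeared in the excerpt. The first step is a bookkeeping calculation: unwinding the inductive definition (\ref{defpower}) one obtains
$$\|\bT^k\|_2^{2} = \Bigl\|\sum_{1 \le i_1, \ldots, i_k \le d} (T_{i_1}\cdots T_{i_k})^*(T_{i_1}\cdots T_{i_k})\Bigr\| = \|M_{\bT}^{k}(I)\|,$$
because each successive application of $M_{\bT}$ inserts a new summation index at the innermost slot. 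Hence $\|\bT^k\|_2^{1/k} = \|M_{\bT}^k(I)\|^{1/(2k)}$.

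Next, I would observe that $M_{\bT}$ is completely positive, since its definition is already a Kraus form. A classical consequence of the Russo--Dye theorem states that for any completely positive (in fact any $2$-positive) linear map $\Phi$ on $\mathcal{B}(\mathcal{H})$ the norm is attained at the identity: $\|\Phi\| = \|\Phi(I)\|$. Applied to $\Phi = M_{\bT}^k$, this gives $\|M_{\bT}^k\| = \|M_{\bT}^k(I)\|$, and the classical Gelfand spectral radius formula applied to the single bounded operator $M_{\bT}$ yields
$$r(M_{\bT}) = \lim_{k\to \infty} \|M_{\bT}^k\|^{1/k} = \lim_{k\to\infty} \|M_{\bT}^k(I)\|^{1/k} = \lim_{k\to\infty}\|\bT^k\|_2^{2/k}.$$

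Finally, by the identity for the spectrum of $M_{\bT}$ recalled in Section \ref{Notation} (cf. \cite{Cu3}), one has $\sigma(M_{\bT}) = \{|\lambda_1|^2 + \cdots + |\lambda_d|^2 : \bm{\lambda} \in \sigma_T(\bT)\}$, so $r(M_{\bT}) = \sup\{\|\bm{\lambda}\|_2^2 : \bm{\lambda} \in \sigma_T(\bT)\} = r(\bT)^2$. Taking square roots in the previous display produces the desired identity $r(\bT) = \lim_{k \to \infty} \|\bT^k\|_2^{1/k}$.

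The main obstacle is the equality $\|M_{\bT}^k\| = \|M_{\bT}^k(I)\|$; the trivial direction is $\|M_{\bT}^k(I)\| \le \|M_{\bT}^k\|\cdot \|I\| = \|M_{\bT}^k\|$, but the reverse inequality (or at least its asymptotic $k$-th root version) requires the Russo--Dye style fact for completely positive linear maps on $\mathcal{B}(\mathcal{H})$. Once this is in place, the rest of the argument is essentially bookkeeping together with the Curto spectral identity for $\sigma(M_{\bT})$.
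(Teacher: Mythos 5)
Your argument is correct, but it cannot be compared with a proof in the paper because the paper offers none: Lemma \ref{lemMuSo} is imported verbatim from M\"uller--Soltysiak \cite{MuSo}, with only a citation. What you have written is essentially the known proof of that theorem, and it meshes well with machinery the paper already sets up in Section \ref{Notation}, where $M_{\bT}(X)=\sum_i T_i^*XT_i$ and Curto's identity $\sigma(M_{\bT})=\{\bar\lambda_1\lambda_1+\cdots+\bar\lambda_d\lambda_d:\bm{\lambda}\in\sigma_T(\bT)\}$ from \cite{Cu3} are used to get the cruder bound $r(\bT)\le\|P\|$; your proof is the natural sharpening of that remark. Two small points to tighten. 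First, in the identity $\|\bT^k\|_2^2=\|M_{\bT}^k(I)\|$, iterating $M_{\bT}$ actually produces $\sum T_{i_1}^*\cdots T_{i_k}^*T_{i_k}\cdots T_{i_1}$, i.e.\ the products appear in reversed order compared with the entries of $\bT^k$; this is harmless since both sums range over all multi-indices $(i_1,\dots,i_k)$, but it is worth saying. Second, the step $\|M_{\bT}^k\|=\|M_{\bT}^k(I)\|$, which you rightly isolate as the crux, does not really need Russo--Dye: since $M_{\bT}^k$ is completely positive (a composition of maps in Kraus form), Stinespring's representation $\Phi(X)=V^*\pi(X)V$ gives $\|\Phi(X)\|\le\|V\|^2\|X\|=\|\Phi(I)\|\,\|X\|$ directly, or one can invoke the Kadison--Schwarz inequality for $2$-positive maps. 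With those clarifications the argument is complete and gives a self-contained proof of the cited lemma.
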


As a way to initiate our discussion, recall that, for $T \in \mathcal{B}(\mathcal{H})$ and $0<t\le1$, one has $\left\Vert \Delta _{t}\left( T\right) \right\Vert \leq \left\Vert T\right\Vert $ \cite{DKY,Tam}. \ (Briefly, $\left\|P^tVP^{1-t}\right\|
\le \left\|PV\right\|^t\left\|VP\right\|^{1-t}$ (by (\ref{Ineq3})), and it is always true that $\left\|PV\right\|=\left\|V^*P\right\|\le\left\|P\right\|=\left\|T\right\|=\left\|VP\right\|$.) \ As a consequence, the sequence of iterates of $\Delta_t(T)$ has decreasing norms, and therefore $\lim\left\|\Delta_t^{(n)}(T)\right\|_2$ exists and is majorized by $\left\|T\right\|_2$.

The other ingredient that enters into the $1$--variable proof is the comparison between the norms of the integer powers of two consecutive iterates, $\left\|(\Delta_t^{(n+1)}(T))^{k}\right\|_2$ and  $\left\|(\Delta_t^{(n)}(T))^{k}\right\|_2$. \ Our goal is to provide the appropriate analogues in the case of $d$--variables.

\medskip
For the reader's convenience, we now restate Theorem \ref{thm18}.

\begin{theorem} \label{thm18A}
Let $\bm{T}=\bm{(}T_{1},\cdots,T_{d})$ be a commuting $d$--tuple of operators on $\mathcal{H}$. \ For $0 < t < 1$, we have%
\begin{equation} \label{eq17A}
r_{T}\left( \bm{T}\right) = \lim_{n\rightarrow \infty } \left\Vert \Delta _{t}^{\left( n\right) }\left( \bm{T}\right) \right\Vert
_{2} .\medskip
\end{equation}
\end{theorem}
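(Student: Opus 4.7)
The plan is to combine Theorem \ref{thm17} (monotonicity) with a multivariable analogue of Yamazaki's classical power-inequality, and then to pass from it to the spectral radius via an asymptotic-equality argument drawn from the Heinz-McIntosh machinery of Lemmas \ref{lem5} and \ref{lem6}. By Theorem \ref{thm17}, the sequence $a_n:=\|\Delta_t^{(n)}(\bm{T})\|_2$ is non-increasing and bounded below by $r_T(\bm{T})$, so $L:=\lim_n a_n$ exists and satisfies $L\ge r_T(\bm{T})$. Since Lemma \ref{lemMuSo} gives $r_T(\bm{T})=\inf_{k\ge 1}\|\bm{T}^k\|_2^{1/k}$, it is enough to prove $L^k\le \|\bm{T}^k\|_2$ for every $k\ge 1$.

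The central estimate is the multivariable power inequality $\|\Delta_t(\bm{T})^k\|_2\le \|\bm{T}^k\|_2$ for each $k\ge 1$. To prove it I would write the component of $\Delta_t(\bm{T})^k$ indexed by $\alpha=(i_1,\ldots,i_k)$ as $P^t S_\alpha P^{1-t}$, where $S_\alpha:=V_{i_1}PV_{i_2}P\cdots V_{i_{k-1}}PV_{i_k}$ (so $\bm{T}^k_\alpha=S_\alpha P$). The column operator then factors as $D_{\Delta_t(\bm{T})^k}=(I\otimes P^t)\,D_S\,P^{1-t}$, and Lemma \ref{lem6}, inequality (\ref{Ineq3}), yields $\|\Delta_t(\bm{T})^k\|_2\le N_{k+1}^{t}\cdot\|\bm{T}^k\|_2^{1-t}$, where $N_{k+1}^2:=\|\sum_\alpha S_\alpha^* P^2 S_\alpha\|$. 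Using $P^2=\sum_j T_j^* T_j$ together with the factorisation $S_{(\alpha,i_{k+1})}=\bm{T}^k_\alpha V_{i_{k+1}}$, a reindexing gives $\sum_\alpha S_\alpha^* P^2 S_\alpha=\sum_j V_j^* M_k V_j = D_{\bm V}^*(I\otimes M_k)D_{\bm V}$, with $M_k:=D_{\bm T^k}^*D_{\bm T^k}$. Since $D_{\bm V}$ is a partial isometry, $\|D_{\bm V}\|=1$, so $N_{k+1}^2\le \|M_k\|=\|\bm T^k\|_2^2$, and the power inequality follows. Iterating with $\bm T$ replaced by $\Delta_t^{(n)}(\bm T)$ produces $a_n^{(k)}:=\|\Delta_t^{(n)}(\bm{T})^k\|_2\le \|\bm{T}^k\|_2$ for all $n,k$, so $b_k:=\lim_n a_n^{(k)}$ exists and satisfies $b_k\le \|\bm T^k\|_2$.

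The submultiplicativity $\|\bm U^k\|_2\le \|\bm U\|_2^k$ (verified by the same column-operator argument) gives $a_n^{(k)}\le (a_n^{(1)})^k$, hence $b_k\le L^k$. The crucial remaining step is the matching inequality $L^k\le b_k$, equivalently the asymptotic equality $\lim_n\bigl((a_n^{(1)})^k-a_n^{(k)}\bigr)=0$. Once this is secured, $L^k=b_k\le \|\bm T^k\|_2$ for every $k$, and letting $k\to\infty$ via Lemma \ref{lemMuSo} delivers $L\le r_T(\bm T)$. To prove the asymptotic equality I would extend the Feki-Yamazaki argument for $t=1/2$ to general $t\in(0,1)$: each application of $\Delta_t$ shrinks the hyponormality-type defect $(a_n^{(1)})^k-a_n^{(k)}$ by a quantitative amount controlled through the asymmetric Heinz inequality (\ref{Ineq3}), reflecting the progressive convergence of the iterates to the class of spherically quasinormal tuples (which are precisely the fixed points of $\Delta_t$). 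The cross-terms arising from the non-commutativity of the $V_j$'s continue to be tamed by the identity $V_iPV_j=V_jPV_i$ from (\ref{v1pv2A}).

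The main obstacle is exactly this asymptotic-equality step: the power inequality and its iteration, although elaborate, are essentially routine applications of Lemmas \ref{lem5}-\ref{lem6}, whereas the sharper identity $L^k=b_k$ requires quantitative control over the decay of the defect along the iterates, and this is where the bulk of the technical work concentrates. The exclusion of $t=1$ (cf.\ Example \ref{ex67}) is essential here: the exponent $1-t$ must be strictly positive in order for the right-hand factor $\|D_S P\|^{1-t}$ of Lemma \ref{lem6}(\ref{Ineq3}) to involve $\bm{T}^k$, and correspondingly for the whole chain of estimates to close up.
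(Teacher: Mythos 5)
Your architecture matches the paper's: monotonicity from Theorem \ref{thm17} gives $L:=\lim_n\|\Delta_t^{(n)}(\bm{T})\|_2\ge r_T(\bm{T})$; the power inequality $\|(\Delta_t(\bm{T}))^k\|_2\le\|\bm{T}^k\|_2$ (your reindexing via $S_{(\alpha,j)}=\bm{T}^k_\alpha V_j$ and the partial isometry $D_{\bm V}$ is correct and is essentially the paper's Lemma \ref{Step3}) yields $b_k:=\lim_n\|(\Delta_t^{(n)}(\bm{T}))^k\|_2\le\|\bm{T}^k\|_2$; and the identity $L^k=b_k$ closes the argument via Lemma \ref{lemMuSo}. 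But the identity $L^k\le b_k$ is precisely the theorem's entire difficulty, and your proposal does not prove it: the appeal to ``each application of $\Delta_t$ shrinks the defect $(a_n^{(1)})^k-a_n^{(k)}$ by a quantitative amount controlled through the Heinz inequality'' names a hoped-for phenomenon, not an argument. The paper's mechanism is different and concrete: one first proves the three-term interpolation bounds $\|(\Delta_t(\bm{T}))^k\|_2\le\|\bm{T}^{k+1}\|_2^{t}\,\|\bm{T}^{k-1}\|_2^{1-t}\,\|\bm{T}\|_2^{1-2t}$ for $0<t\le\tfrac12$ and its mirror image for $\tfrac12\le t<1$ (Lemmas \ref{Step7} and \ref{Step8}, obtained by splitting $A^tXA^{1-t}$ as $(A^tXA^t)A^{1-2t}$, resp. $A^{2t-1}(A^{1-t}XA^{1-t})$, and using $\|AXA\|=\|\bm{T}^{k+1}\|_2$); one then applies these with $\bm{T}$ replaced by $\Delta_t^{(n)}(\bm{T})$, lets $n\to\infty$, and runs an induction on $k$ that sandwiches $L_{t,m+1}^{t}L^{m-t(m+1)}$ between $L^m$ and $L^m$, finally dividing by a power of $L$. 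Nothing in your sketch produces the crucial upper bound on $\|(\Delta_t^{(n+1)}(\bm{T}))^m\|_2$ in terms of $(m\pm1)$-st powers of the \emph{previous} iterate, which is what makes the induction move.

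Two further points. First, the division step in the induction requires $L>0$; the paper secures this under the hypothesis $r(\bm{T})>0$ and then disposes of the case $r(\bm{T})=0$ separately, by passing to $\bm{T}\oplus(cI,0,\cdots,0)$ and letting $c\downarrow 0$ --- your proposal is silent on this degenerate case (if you argue that $L=0$ makes $L^k\le b_k$ trivial you can avoid it, but you must say so). Second, your closing remark locates the need for $t<1$ in the power inequality; that is not where it enters --- at $t=1$ inequality (\ref{Ineq3}) still gives $\|AX\|\le\|\bm{T}^k\|_2$ and Lemma \ref{Step3} holds for all $0<t\le1$. The restriction $t<1$ (and $t>0$) is needed in the interpolation/induction step you have omitted, which is consistent with Example \ref{ex67} showing the theorem genuinely fails at $t=1$.
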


To formulate a strategy to prove this result, we remind ourselves that there are three elements that should enter into the proof: (i) an application of M\"uller and Soltysiak's joint spectral radius formula \cite{MuSo}, which will require a good handle on the powers of $\bT$, its generalized spherical Aluthge transform $\dbT$ and its iterates; (ii) the equality of the Taylor spectra of $\bT$ and any iterate $\Delta_T^{(n)}(\bT)$; and (iii) the asymptotic behavior of the norms of the iterates. \ The first goal is to establish the inequality $r(\bT) \le \left\|\Delta_t^{(n)}(\bT)\right\|_2 $ for all $t \in \left(0,1\right), n \ge 1, k \ge 1.$ \ Once this is done, we will look carefully at the behavior of $\left\|(\Delta_t^{(n)}(\bT))^k\right\|_2$ as a function of the parameters $t$, $n$ and $k$. \ Our proof will be structured around nine key steps, each of independent interest; we will record them in a series of nine auxiliary lemmas. 

\begin{lemma} \label{Step1}
(cf. \cite[Lemma 4.5]{FeYa}) \ For $k \ge 1$, 
$$
\left\|\bT^{k+1}\right\|_2 \le \left\|\bT\right\|_2 \left\|\bT^{k}\right\|_2 .
$$
\end{lemma}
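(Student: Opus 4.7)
The plan is to interpret $\bm{T}^{k+1}=\bm{T}\bm{T}^k$ at the level of column operators and apply submultiplicativity of the operator norm together with the identity $\sum_{i=1}^d T_i^*T_i=P^2$. The key structural fact to exploit is that the \emph{outermost} factor in each entry of $\bm{T}^{k+1}$ ranges over $T_1,\cdots,T_d$, which lets an inner sum of the form $\sum_i T_i^*T_i$ collapse to $P^2$; and $\|P\|^2=\|\bm{T}\|_2^2$ is exactly the factor that will appear on the right-hand side of the desired inequality.

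First I would record the basic observation that, for any tuple $\bm{S}=(S_1,\cdots,S_m)$ of operators on $\mathcal{H}$, the $2$-norm $\|\bm{S}\|_2 = \|(\sum_j S_j^*S_j)^{1/2}\|$ coincides with the operator norm of the associated column operator $D_{\bm{S}}\colon\mathcal{H}\to\mathcal{H}^m$. In particular $\|\bm{S}\|_2$ is invariant under permutations of the entries of $\bm{S}$, and $\|\bm{T}\|_2=\|P\|$, $\|\bm{T}^k\|_2=\|D_{\bm{T}^k}\|$. By the recursion (\ref{defpower}), the entries of $\bm{T}^{k+1}$ are exactly the $d^{k+1}$ products $T_i\,(\bm{T}^k)_j$ with $1\le i\le d$ and $1\le j\le d^k$, so a direct computation yields
\begin{equation*}
\|\bm{T}^{k+1}\|_2^2 \;=\; \Bigl\|\sum_{i,j}\bigl((\bm{T}^k)_j\bigr)^* T_i^* T_i\,(\bm{T}^k)_j\Bigr\| \;=\; \Bigl\|\sum_j \bigl((\bm{T}^k)_j\bigr)^* P^2\,(\bm{T}^k)_j\Bigr\| \;=\; \bigl\| D_{\bm{T}^k}^*\,(I_{d^k}\otimes P^2)\,D_{\bm{T}^k}\bigr\|,
\end{equation*}
where $I_{d^k}\otimes P^2$ denotes the block-diagonal operator on $\mathcal{H}^{d^k}$ with $d^k$ copies of $P^2$ on the diagonal.

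Submultiplicativity of the operator norm, combined with $\|I_{d^k}\otimes P^2\|=\|P\|^2=\|\bm{T}\|_2^2$ and $\|D_{\bm{T}^k}\|=\|\bm{T}^k\|_2$, then gives
\begin{equation*}
\|\bm{T}^{k+1}\|_2^2 \;\le\; \|D_{\bm{T}^k}\|^2\cdot\|I_{d^k}\otimes P^2\| \;=\; \|\bm{T}^k\|_2^2\,\|\bm{T}\|_2^2,
\end{equation*}
and the claim follows upon taking square roots. The argument is essentially mechanical and I do not foresee any substantive obstacle; the only piece of bookkeeping is the matching between the indexing conventions for $\bm{T}^{k+1}$ and the composition $(I_{d^k}\otimes D_{\bm{T}})\,D_{\bm{T}^k}$, which is handled by the permutation invariance of $\|\cdot\|_2$.
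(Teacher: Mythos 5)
Your argument is correct and is essentially the paper's proof in mirror image: both compute $\left\|\bT^{k+1}\right\|_2^2$ as the norm of an operator of the form $X^*AX$ built from column operators and invoke the inequality $\left\|X^*AX\right\|\le\left\|X^*X\right\|\left\|A\right\|$. The only difference is which factor of $\bT\bT^k$ is peeled off --- you isolate the outer factors $T_i$, so the middle operator collapses to the block-diagonal $I_{d^k}\otimes P^2$ and $X=D_{\bT^k}$, whereas the paper isolates the innermost factor and takes $X=D_{\bT}$ with $A$ the block-diagonal operator with constant diagonal entry $D_{\bT^k}^*D_{\bT^k}$; either grouping yields the identical bound.
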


\begin{proof}
By (\ref{defpower}), we have 
\begin{eqnarray*}
\left\|\bT^{k+1}\right\|_2^2&=&\left\|\sum\limits_{i_1,\cdots,i_{k+1}=1}^d T_{i_1}^*\cdots T_{i_{k+1}}^*T_{i_{k+1}}\cdots T_{i_1}\right\| \\
&=&\left\|\sum\limits_{i_1=1}^d T_{i_1}^* \left(\sum\limits_{i_2,\cdots,i_{k+1}=1}^dT_{i_2}^*\cdots T_{i_{k+1}}^*T_{i_{k+1}}\cdots T_{i_2} \right) T_{i_1}\right\| \\
&\le&\left\|\sum\limits_{i_1=1}^d T_{i_1}^*T_{i_1}\right\|\left\|\sum\limits_{i_2,\cdots,i_{k+1}=1}^dT_{i_2}^*\cdots T_{i_{k+1}}^*T_{i_{k+1}}\cdots T_{i_2} \right\| \\
&=&\left\|\bT\right\|_2^2\left\|\bT^k\right\|_2^2,
\end{eqnarray*}
where the inequality in the third line requires an application of the well-known inequality $\left\|X^*AX\right\|\le\left\|X^*X\right\|\left\|A\right\|$, for $A \ge 0$ and $X$ arbitrary; in the present situation, $X$ is the column vector with operator entries which are products of $T_1,\cdots,T_d$, and $A$ is the diagonal operator matrix with constant diagonal entry $\sum\limits_{i_2,\cdots,i_{k+1}=1}^dT_{i_2}^*\cdots T_{i_{k+1}}^*T_{i_{k+1}}\cdots T_{i_2}$.
\end{proof}
  
\begin{lemma} \label{Step2}
For $0 < t \le 1$, 
$$
\left\|\dbT\right\|_2 \le \left\|\bT\right\|_2 .
$$
\end{lemma}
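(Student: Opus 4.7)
The plan is to express the $2$-norm $\|\dbT\|_2$ as the norm of a single column map and factor that map through three pieces whose norms are easy to control. Recall that, for any commuting $d$-tuple $\bm{S}$, one has $\|\bm{S}\|_2 = \|D_{\bm{S}}\|$, since $\|D_{\bm{S}}\|^2 = \|S_1^*S_1 + \cdots + S_d^*S_d\|$. Applying this to $\bm{S} := \dbT$ and reading off the coordinates, we have
$$D_{\dbT} \;=\; \widetilde{P^t}\,D_{\bm{V}}\,P^{1-t},$$
where $\widetilde{P^t} := \diag(P^t,\ldots,P^t)$ acts on $\mathcal{H}\oplus \cdots \oplus \mathcal{H}$ and $D_{\bm{V}}$ is the joint partial isometry from the spherical polar decomposition (\ref{operator}).

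Submultiplicativity of the operator norm then yields
$$\|\dbT\|_2 \;=\; \|D_{\dbT}\| \;\le\; \|\widetilde{P^t}\|\cdot\|D_{\bm{V}}\|\cdot\|P^{1-t}\| \;\le\; \|P\|^t \cdot 1 \cdot \|P\|^{1-t} \;=\; \|P\| \;=\; \|\bT\|_2,$$
using that a block-diagonal operator with identical diagonal entries $P^t$ has norm $\|P^t\|$, that $D_{\bm{V}}$ is a partial isometry (so $\|D_{\bm{V}}\|\le 1$), and that $\|P^s\| = \|P\|^s$ for $s \ge 0$ by the continuous functional calculus for the positive operator $P$. No real obstacle is expected; the one temptation to resist is applying the Heinz--McIntosh inequality (\ref{Ineq3}) coordinate by coordinate, since that would only control $\max_i \|P^tV_iP^{1-t}\|$, which is strictly weaker than the full $2$-norm $\|\dbT\|_2$ we need to bound. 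The column factorization above handles all $d$ coordinates simultaneously in one stroke.
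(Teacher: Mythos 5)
Your proof is correct, but it is not the argument the paper uses. The paper places $A:=\diag(P,\cdots,P)$, $B:=A$, and the block matrix $X$ with first column $(V_1,\cdots,V_d)^{\mathrm{T}}$ on $\mathcal{H}^d$, identifies $\left\|\dbT\right\|_2$ with $\left\|A^tXB^{1-t}\right\|$, and then invokes the Heinz--Kittaneh inequality (\ref{Ineq3}), $\left\|A^tXB^{1-t}\right\|\le\left\|AX\right\|^t\left\|XB\right\|^{1-t}$, together with $\left\|AX\right\|\le\left\|P\right\|$ and $\left\|XB\right\|=\left\|P\right\|$. You instead factor the column operator directly as $D_{\dbT}=\widetilde{P^t}\,D_{\bm{V}}\,P^{1-t}$ and use only submultiplicativity, $\left\|D_{\bm{V}}\right\|\le 1$, and $\left\|P^s\right\|=\left\|P\right\|^s$ from the functional calculus; all three of these facts are sound (including the edge case $t=1$, where $V_iP^0=V_i$ because the initial space of $D_{\bm{V}}$ is $(\ker P)^\perp$), so your argument is a complete and strictly more elementary proof of this lemma. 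Two remarks. First, your warning about applying (\ref{Ineq3}) coordinate by coordinate is aimed at a strawman: the paper does use (\ref{Ineq3}), but applied to the block operators $A$, $X$, $B$, which controls the full $2$-norm exactly as your factorization does. Second, what the paper's heavier setup buys is reusability: the same $A$, $B$, $X$ device with entries $V_{i_1}P\cdots PV_{i_k}$ drives Lemma \ref{Step3}, where one needs $\left\|(\dbT)^k\right\|_2\le\left\|\bT^k\right\|_2$; there the naive submultiplicative bound only gives $\left\|P\right\|\cdot\left\|X\right\|$, which is too weak, and the Heinz-type inequality becomes genuinely necessary. So your shortcut works for $k=1$ but would not extend to the later lemmas in this section.
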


\begin{proof} 
On the Hilbert space $\mathcal{H}^d:=\mathcal{H} \oplus \cdots \oplus \mathcal{H}$ ($d$ orthogonal summands), consider the operators $A:=\diag(P,\cdots,P)$, $B:=A$, and
\begin{equation*}
X:=\left(
\begin{array}{cccc}
V_{1} & 0 & 0 & 0 \\
V_{2} & 0 & 0 & 0 \\
\vdots & \vdots & \ddots & \vdots \\
V_{d} & 0 & 0 & 0
\end{array}
\right) .
\end{equation*}
Clearly, $\dbT$ is the first column of the operator matrix $A^tXB^{1-t}$, whose other columns are all zero. \ Now apply (\ref{Ineq3}) to obtain
$$
\left\|\dbT\right\|_2 = \left\|A^tXB^{1-t}\right\|\le \left\|AX\right\|^t\left\|XB\right\|^{1-t}.
$$
Observe now that 
$$
\left\|AX\right\|=\left\|X^*A\right\| \le \left\|A\right\|=\left\|P\right\|=\left\|\bT\right\|_2,
$$
and $\left\|XB\right\|=\left\|\bT\right\|_2$. \ It follows that  
$$
\left\|\dbT\right\|_2 \le \left\|\bT\right\|_2^t\left\|\bT\right\|_2^{1-t} = \left\|\bT\right\|_2,
$$
as desired.
\end{proof}

\begin{lemma} (cf. \cite[Lemma 4.3]{FeYa} for $t=\frac{1}{2}$) \label{Step3}
\ For $0 < t \le 1$ and $k \ge 1$, 
$$
\left\|(\dbT)^k\right\|_2 \le \left\|\bT^k\right\|_2 .
$$
\end{lemma}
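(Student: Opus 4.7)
The plan is to mirror the proof of the preceding lemma: I would apply the Heinz--McIntosh inequality (\ref{Ineq3}) to a suitable pair of global column and diagonal operators, and then reduce the remaining estimate to the criss-cross identity (\ref{v1pv2A}). First, I would normalize the entries of $\bT^k$ and $(\dbT)^k$ to a common form. For each multi-index $I=(i_1,\dots,i_k)\in\{1,\dots,d\}^k$, put
$$
W_I := V_{i_1}PV_{i_2}P\cdots P V_{i_k}.
$$
Using $P^{1-t}P^{t}=P$ to merge adjacent factors inside the product, the $I$-th entry of $(\dbT)^k$ equals $P^{t}W_IP^{1-t}$, while the $I$-th entry of $\bT^k$ is $T_{i_1}\cdots T_{i_k}=W_IP$.

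Next, I would assemble the $W_I$'s into a column operator $\mathbf{W}:\mathcal{H}\to \bigoplus_I \mathcal{H}$ and introduce the diagonal $\mathbf{P}_{\mathrm{big}}:=\diag(P,\dots,P)$ on $\bigoplus_I \mathcal{H}$; then $\|\bT^k\|_2=\|\mathbf{W}P\|$ and $\|(\dbT)^k\|_2=\|\mathbf{P}_{\mathrm{big}}^{\,t}\mathbf{W}P^{1-t}\|$. Applying (\ref{Ineq3}) with $A=\mathbf{P}_{\mathrm{big}}$, $B=P$ and $X=\mathbf{W}$ gives
$$
\|(\dbT)^k\|_2 \;\le\; \|\mathbf{P}_{\mathrm{big}}\mathbf{W}\|^{t}\,\|\mathbf{W}P\|^{1-t},
$$
so the whole lemma reduces to the single inequality $\|\mathbf{P}_{\mathrm{big}}\mathbf{W}\|\le\|\mathbf{W}P\|$, after which the right-hand side collapses to $\|\mathbf{W}P\|=\|\bT^k\|_2$.

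This last step is the main obstacle: the column $\mathbf{P}_{\mathrm{big}}\mathbf{W}$ has entries $PW_I$, not $W_IP$, so there is no direct way to pull $P$ across $W_I$. This is precisely where I expect (\ref{v1pv2A}) to enter. I would expand
$$
\|\mathbf{P}_{\mathrm{big}}\mathbf{W}\|^{2} \;=\; \Big\|\sum_I W_I^* P^2 W_I\Big\| \;=\; \Big\|\sum_I \sum_{j=1}^d (T_j W_I)^*(T_j W_I)\Big\|,
$$
and then march $V_j$ from the leftmost position of $T_jW_I=(V_jP)W_I$ to the rightmost by repeated application of $V_jPV_{i_\ell}=V_{i_\ell}PV_j$; the outcome is $T_jW_I=T_IV_j$ with $T_I:=T_{i_1}\cdots T_{i_k}$. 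The double sum then factors as
$$
\sum_{j=1}^d V_j^*\Big(\sum_I T_I^*T_I\Big)V_j \;=\; D_{\bm V}^{\,*}\,\diag\!\Big(\textstyle\sum_I T_I^*T_I,\dots,\sum_I T_I^*T_I\Big)\,D_{\bm V},
$$
whose norm is at most $\|\sum_I T_I^*T_I\|=\|\mathbf{W}P\|^2$ because $D_{\bm V}$ is a (joint) partial isometry. Combining the two bounds yields $\|(\dbT)^k\|_2\le\|\bT^k\|_2^{t}\,\|\bT^k\|_2^{1-t}=\|\bT^k\|_2$, as required.
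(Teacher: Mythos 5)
Your proof is correct and follows essentially the same route as the paper: both apply inequality (\ref{Ineq3}) to the column operator with entries $W_I=V_{i_1}P\cdots PV_{i_k}$ flanked by diagonal copies of $P$ (the paper merely pads $X$ to a square matrix on $\mathcal{H}^{d^k}$ so the lemma applies verbatim), and both reduce the key estimate $\|\mathbf{P}_{\mathrm{big}}\mathbf{W}\|\le\|\bT^k\|_2$ to writing $P^2=\sum_j T_j^*T_j$ and absorbing the outer partial isometries. The only cosmetic difference is that you invoke the criss--cross identity (\ref{v1pv2A}) to march $V_j$ to the right, whereas the direct regrouping $V_jPV_{i_1}P\cdots PV_{i_k}=T_jT_{i_1}\cdots T_{i_{k-1}}V_{i_k}$ already yields a sum of the required form $\sum_m V_m^*\bigl(\sum_J T_J^*T_J\bigr)V_m$.
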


\begin{proof}
We imitate the Proof of Lemma \ref{Step2}, this time letting $A,B,X$ act on the space $\mathcal{H}^{d^k}$. \ Thus, $A:=\diag(P,\cdots,P)$, $B:=A$, and $X$ is a $d^k$ by $d^k$ operator matrix, with the only nonzero entries appearing in the first column; the entries in that column are given by $V_{i_1}P\cdots PV_{i_k}$, for $i_1,\cdots,i_k=1,\cdots,d$. \ It is easy to show that $\left\|(\dbT)^k\right\|_2=\left\|A^tXB^{1-t}\right\|$. \ We can once again appeal to (\ref{Ineq3}) to obtain
\begin{equation} \label{newineq}
\left\|(\dbT)^k\right\|_2 = \left\|A^tXB^{1-t}\right\|\le \left\|AX\right\|^t\left\|XB\right\|^{1-t}.
\end{equation}
(In the special case of $t=\frac{1}{2}$, one can use instead Lemma \ref{lem5}, as done in \cite{FeYa}.) \ Now, 
\begin{eqnarray}
\left\|AX\right\|^2&=&\left\|X^*AX\right\| \nonumber \\
&=&\left\|\sum\limits_{i_1,\cdots,i_k}^d V_{i_1}^*P\cdots PV_{i_k}^*P^2V_{i_k}P\cdots PV_{i_1}\right\| \nonumber \\
&=&\left\|\sum\limits_{i_1,\cdots,i_k}^d V_{i_1}^*P\cdots PV_{i_k}^*P\left(\sum\limits_{i_1,\cdots,i_{k+1}}^d V_{i_{k+1}}^dV_{i_{k+1}}\right) PV_{i_k}P\cdots PV_{i_1}\right\| \nonumber \\
&& (\textrm{since } V_1^*V_1+\cdots+V_d^*V_d \textrm{ is the projection onto the closure of } \Ran P) \nonumber \\
&=& \left\|\sum\limits_{i_1,\cdots,i_k}^d V_{i_1}^* \left(\sum\limits_{i_2,\cdots,i_{k+1}}^d T_{i_2}^* \cdots T_{i_{k+1}}^*T_{i_{k+1}} \cdots T_{i_2} \right) V_{i_1}\right\| \nonumber \\
& \le & \left\|\sum\limits_{i_2,\cdots,i_{k+1}}^d T_{i_2}^* \cdots T_{i_{k+1}}^*T_{i_{k+1}} \cdots T_{i_2}\right\| \nonumber \\
&=&\left\|\bT^k\right\|_2^2. \label{newIneq}
\end{eqnarray}
On the other hand, it is straightforward to verify that $\left\|XA\right\|=\left\|\bT^k\right\|$. \ Inserting this and (\ref{newIneq}) in (\ref{newineq}) yields the desired conclusion.
\end{proof}

\begin{corollary} \label{cork+1}(cf. \cite[bottom of page 13]{FeYa}) \ With the notation in Lemma \ref{Step3}, we also have:
$$
\left\|AXA\right\|=\left\|\bT^{k+1}\right\|_2.
$$
\end{corollary}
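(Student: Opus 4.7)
The plan is to mirror exactly the computation from the Proof of Lemma \ref{Step3}, but with the extra factor of $A$ on the left, which should contribute precisely one additional power index. First, I would unpack the structure of $AXA$. Since $X$ has its only nonzero entries in the first column and both $A$ and $B=A$ are diagonal with constant entry $P$, the matrix $AXA$ is again supported on its first column. The entry indexed by a multi-index $(i_1,\ldots,i_k)$ is obtained by flanking the corresponding entry of $X$, namely $V_{i_1}PV_{i_2}P\cdots PV_{i_k}$, with a $P$ on each side.

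Next, I would apply the identity $V_iP=T_i$ to rewrite these entries in a cleaner form. Grouping each $V_{i_j}$ with the $P$ to its right (including the new trailing $P$ introduced by multiplication by $A$ on the right), one obtains
$$
P\cdot V_{i_1}PV_{i_2}P\cdots PV_{i_k}\cdot P \;=\; P\cdot (V_{i_1}P)(V_{i_2}P)\cdots (V_{i_k}P)\;=\;P\, T_{i_1}T_{i_2}\cdots T_{i_k}.
$$
Writing $C_{(i_1,\ldots,i_k)}:=PT_{i_1}\cdots T_{i_k}$, the norm $\|AXA\|$ equals the norm of the column with entries $C_{(i_1,\ldots,i_k)}$, so
$$
\|AXA\|^2 \;=\; \Bigl\|\sum_{i_1,\ldots,i_k=1}^d C_{(i_1,\ldots,i_k)}^{\ast} C_{(i_1,\ldots,i_k)}\Bigr\| \;=\; \Bigl\|\sum_{i_1,\ldots,i_k=1}^d T_{i_k}^{\ast}\cdots T_{i_1}^{\ast}\,P^2\,T_{i_1}\cdots T_{i_k}\Bigr\|.
$$

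The final step is to expand $P^2=\sum_{i_0=1}^d T_{i_0}^{\ast}T_{i_0}$, which absorbs the leftover $P^2$ as one more layer of the sum:
$$
\|AXA\|^2 \;=\; \Bigl\|\sum_{i_0,i_1,\ldots,i_k=1}^d T_{i_k}^{\ast}\cdots T_{i_1}^{\ast}T_{i_0}^{\ast}T_{i_0}T_{i_1}\cdots T_{i_k}\Bigr\|.
$$
After relabeling the dummy indices (e.g.\ reversing the order $i_0,i_1,\ldots,i_k \mapsto j_{k+1},j_k,\ldots,j_1$), this is exactly the expression for $\|\bT^{k+1}\|_2^2$ used in the Proof of Lemma \ref{Step1}. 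Taking square roots then gives $\|AXA\|=\|\bT^{k+1}\|_2$.

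I do not anticipate any significant obstacle here; this is a bookkeeping computation that essentially duplicates the manipulation already carried out for $\|XA\|=\|\bT^k\|_2$ in Lemma \ref{Step3}, with one extra factor of $P^2$ supplying the additional summation layer needed to pass from $\bT^k$ to $\bT^{k+1}$. The only point requiring mild care is tracking the associativity when converting $V_{i_1}PV_{i_2}P\cdots PV_{i_k}\cdot P$ into the telescoped product $T_{i_1}\cdots T_{i_k}$, which relies on pairing each $V_{i_j}$ with the $P$ immediately to its right.
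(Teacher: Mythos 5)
Your proof is correct and follows essentially the same computation as the paper: both reduce $\left\|AXA\right\|^2$ to the norm of $\sum_{i_1,\cdots,i_{k+1}} T_{i_1}^*\cdots T_{i_{k+1}}^*T_{i_{k+1}}\cdots T_{i_1}$ by absorbing the central $P^2$ as one extra summation layer. Your direct expansion $P^2=\sum_{i_0}T_{i_0}^*T_{i_0}$ is a marginally cleaner route than the paper's insertion of the projection $\sum_i V_i^*V_i$ onto $\overline{\Ran P}$, and it yields the claimed equality throughout (the paper's displayed chain contains a spurious $\leq$ at one step that should be an equality).
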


\begin{proof}
\begin{eqnarray*}
\left\|AXA\right\|^2&=&\left\|AX^*A^2XA\right\| \\
&=&\left\|\sum\limits_{i_1,\cdots,i_k}^d PV_{i_1}^*P\cdots PV_{i_k}^*P^2V_{i_k}P\cdots PV_{i_1}P\right\| \\
&=&\left\|\sum\limits_{i_1,\cdots,i_k}^d PV_{i_1}^*P\cdots PV_{i_k}^*P\left(\sum\limits_{i_1,\cdots,i_{k+1}}^d V_{i_{k+1}}^dV_{i_{k+1}}\right) PV_{i_k}P\cdots PV_{i_1}P\right\| \\
& \le & \left\|\sum\limits_{i_1,\cdots,i_{k+1}}^d T_{i_1}^* \cdots T_{i_{k+1}}^*T_{i_{k+1}} \cdots T_{i_1}\right\| \\
&=&\left\|\bT^{k+1}\right\|_2^2. \label{newIneq2}
\end{eqnarray*}
\end{proof}

\begin{lemma} \label{Step4}
For $0<t \le 1$ and $k,n \ge 1$, 
$$
\left\|(\Delta_t^{(n+1)}(\bT))^k\right\|_2 \le \left\|(\Delta_t^{(n)}(\bT))^k\right\|_2 .
$$
\end{lemma}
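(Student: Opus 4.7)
The plan is to reduce Lemma \ref{Step4} to Lemma \ref{Step3} via iteration, with essentially no additional work beyond checking that the iterates are well-defined commuting tuples. First, one verifies by induction on $n$ that $\Delta_t^{(n)}(\bT)$ is itself a commuting $d$-tuple for every $n \ge 1$, so that $\Delta_t^{(n+1)}(\bT) = \Delta_t(\Delta_t^{(n)}(\bT))$ is well-defined as the generalized spherical Aluthge transform of another commuting $d$-tuple. The base case is the standing hypothesis that $\bT$ is commuting; the inductive step follows from the criss-cross commutativity factorization $\Delta_t^{(n)}(\bT) = \bm{BA}$ (with $\bm{A} := (V_{1,n-1}P_{n-1}^{1-t},\cdots,V_{d,n-1}P_{n-1}^{1-t})$ and $\bm{B} := (P_{n-1}^t,\cdots,P_{n-1}^t)$, using the spherical polar decomposition of $\Delta_t^{(n-1)}(\bT)$) recorded on page \pageref{ABBA}.

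With this in hand, set $\bm{S} := \Delta_t^{(n)}(\bT)$, a commuting $d$-tuple with its own spherical polar decomposition $\bm{S} = (W_1P_n,\cdots,W_dP_n)$, where $P_n$ is the positive part of $D_{\bm{S}}^0$ and $(W_1,\cdots,W_d)$ is the joint partial isometry. Apply Lemma \ref{Step3} to $\bm{S}$ in place of $\bT$: for $0 < t \le 1$ and $k \ge 1$,
$$
\left\Vert (\Delta_t(\bm{S}))^k \right\Vert_2 \le \left\Vert \bm{S}^k \right\Vert_2.
$$
Substituting $\bm{S} = \Delta_t^{(n)}(\bT)$ and $\Delta_t(\bm{S}) = \Delta_t^{(n+1)}(\bT)$ yields precisely
$$
\left\Vert (\Delta_t^{(n+1)}(\bT))^k \right\Vert_2 \le \left\Vert (\Delta_t^{(n)}(\bT))^k \right\Vert_2,
$$
as desired.

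I do not expect any genuine obstacle here: all the analytic content of the argument---the Heinz--McIntosh inequality (\ref{Ineq3}) applied to $\Vert A^t X B^{1-t}\Vert$, together with the identification of $W_1^*W_1+\cdots+W_d^*W_d$ as the orthogonal projection onto $\overline{\Ran P_n}$ to control $\Vert AX\Vert$---is already encapsulated in Lemma \ref{Step3}. The only conceptual point, namely that $\Delta_t$ is a self-map on the class of commuting $d$-tuples so that the iteration makes sense, is handled by the inductive use of criss-cross commutativity. Consequently Lemma \ref{Step4} is, in effect, the statement that applying Lemma \ref{Step3} at every stage of the iteration produces a monotone decreasing sequence $\{\Vert (\Delta_t^{(n)}(\bT))^k\Vert_2\}_{n=1}^\infty$ for each fixed $k$, which is exactly the ingredient needed for the subsequent convergence analysis leading to Theorem \ref{thm18}.
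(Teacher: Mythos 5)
Your proposal is correct and follows essentially the same route as the paper: the paper's proof likewise notes that $\Delta_t^{(n+1)}(\bT)=\Delta_t(\Delta_t^{(n)}(\bT))$ and applies Lemma \ref{Step3} to the commuting $d$--tuple $\Delta_t^{(n)}(\bT)$. Your extra remarks verifying commutativity of the iterates via criss--cross commutativity are a sound (if already implicit in Section \ref{Notation}) justification that this substitution is legitimate.
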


\begin{proof}
Since $\Delta_t^{(n+1)}(\bT)=\Delta_t(\Delta_t^{(n)}(\bT))$, it suffices to prove the conclusion for $n=0$. \ But this is the content of Lemma \ref{Step3}, when applied to $\Delta_t^{(n)}(\bT)$. 
\end{proof}

\begin{corollary} \label{cordecrease}
For $0<t \le 1$ and $k,n \ge 1$, 
$$
\left\|(\Delta_t^{(n)}(\bT))^k\right\|_2 \le \cdots \le \left\|(\Delta_t^{(2)}(\bT))^k\right\|_2 \le \left\|(\Delta_t(\bT))^k\right\|_2 \le \left\|\bT^k\right\|_2.
$$
\end{corollary}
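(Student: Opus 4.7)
The plan is to recognize that Corollary \ref{cordecrease} is a pure chaining/induction consequence of the two preceding results, so no new technical machinery is needed. The rightmost inequality $\|(\Delta_t(\bT))^k\|_2 \le \|\bT^k\|_2$ is exactly Lemma \ref{Step3}, and each of the remaining inequalities is an instance of Lemma \ref{Step4}. Thus the entire task reduces to concatenating a finite family of inequalities already in hand.

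Concretely, I would first adopt the convention $\Delta_t^{(0)}(\bT):=\bT$, so that the conclusion of Lemma \ref{Step3} becomes the case $n=0$ of the inequality produced by Lemma \ref{Step4}. Then I would proceed by induction on $n$: the base case $n=1$ is Lemma \ref{Step3}, and the inductive step from $n$ to $n+1$ is obtained by applying Lemma \ref{Step4} to the commuting $d$--tuple $\Delta_t^{(n)}(\bT)$ (which is indeed commuting, by the identity \eqref{v1pv2A} and the remarks surrounding it, so that all subsequent iterates of $\Delta_t$ are well-defined $d$--tuples). Equivalently, one may simply write out the chain
\[
\|(\Delta_t^{(n)}(\bT))^k\|_2 \le \|(\Delta_t^{(n-1)}(\bT))^k\|_2 \le \cdots \le \|(\Delta_t(\bT))^k\|_2 \le \|\bT^k\|_2,
\]
where each inequality is Lemma \ref{Step4} applied with the iterate index shifted appropriately (and the final one is Lemma \ref{Step3}).

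There is no genuine obstacle here; the only point that merits a sentence of verification is that Lemma \ref{Step4} applies with $\bT$ replaced by any iterate $\Delta_t^{(m)}(\bT)$. This is legitimate because the hypotheses of Lemma \ref{Step4} require only a commuting $d$--tuple on $\mathcal{H}$, and the generalized spherical Aluthge transform preserves commutativity (as noted in the discussion of \eqref{v1pv2A}). Hence the induction is justified, and the monotone chain stated in the corollary follows without further computation.
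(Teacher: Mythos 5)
Your proof is correct and is essentially the paper's own argument: the paper proves the corollary by declaring it ``immediate from Lemmas \ref{Step3} and \ref{Step4},'' which is precisely the chaining you spell out (rightmost inequality from Lemma \ref{Step3}, each remaining link from Lemma \ref{Step4} applied to the appropriate iterate). Your added remark that Lemma \ref{Step4} applies to $\Delta_t^{(m)}(\bT)$ because the generalized spherical Aluthge transform preserves commutativity is a legitimate and worthwhile clarification, but it does not change the route.
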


\begin{proof}
Immediate from Lemmas \ref{Step3} and \ref{Step4}. 
\end{proof}

\begin{lemma} \label{Step5}
For $0 < t <1$ and $k,n \ge 1$, 
$$
r(\bT) \le \left\|(\Delta_t^{(n)}(\bT))^k\right\|_2^{\frac{1}{k}} .
$$
\end{lemma}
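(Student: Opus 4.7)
The plan is to combine the spectral invariance from Theorem \ref{Taylorsp2} with the multivariable spectral radius formula of Müller and Soltysiak (Lemma \ref{lemMuSo}). First, I would observe that iterating the identity $\sigma_T(\Delta_t(\bT)) = \sigma_T(\bT)$ from Theorem \ref{Taylorsp2} yields $\sigma_T(\Delta_t^{(n)}(\bT)) = \sigma_T(\bT)$ for every $n \ge 1$, and hence $r(\Delta_t^{(n)}(\bT)) = r(\bT)$. Setting $\bm{S} := \Delta_t^{(n)}(\bT)$, the problem reduces to proving the general inequality $r(\bm{S}) \le \|\bm{S}^k\|_2^{1/k}$ for any commuting $d$--tuple $\bm{S}$ and every $k \ge 1$.

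For this reduction, my preferred route is to extend Lemma \ref{Step1} to the full submultiplicative inequality
$$
\|\bm{S}^{k+m}\|_2 \le \|\bm{S}^k\|_2 \, \|\bm{S}^m\|_2,
$$
which requires only that one split the multi-index sum $\sum_{i_1,\dots,i_{k+m}}$ into the outer $m$ indices and the inner $k$ indices and then apply the same $\|X^* A X\| \le \|X^* X\| \|A\|$ trick used in the proof of Lemma \ref{Step1} (with $X$ the column vector of $m$--fold products of the $S_i$ and $A$ the diagonal operator matrix with constant diagonal entry $\sum_{j_1,\dots,j_k} S_{j_1}^* \cdots S_{j_k}^* S_{j_k} \cdots S_{j_1}$). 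Once submultiplicativity is in hand, Fekete's lemma forces $\inf_j \|\bm{S}^j\|_2^{1/j} = \lim_j \|\bm{S}^j\|_2^{1/j}$; combining this with Lemma \ref{lemMuSo} gives $r(\bm{S}) \le \|\bm{S}^k\|_2^{1/k}$ for every $k$.

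Putting the two pieces together yields
$$
r(\bT) \;=\; r(\Delta_t^{(n)}(\bT)) \;\le\; \|(\Delta_t^{(n)}(\bT))^k\|_2^{1/k},
$$
which is the desired inequality. The only step requiring any care is the extension of Lemma \ref{Step1} to the bi-index submultiplicative form; however, the argument is a routine block-split of the one already in the paper, and no new ingredients beyond Theorem \ref{Taylorsp2}, Lemma \ref{lemMuSo}, and Fekete are needed. As an equally clean alternative, one could invoke the polynomial spectral mapping theorem for the Taylor spectrum applied to the map $\bm{\mu} \mapsto (\mu_{i_1}\cdots\mu_{i_k})_{i_1,\dots,i_k=1}^d$ to obtain $r(\bm{S}^k) = r(\bm{S})^k$ directly, and then combine this with the general bound $r(\bm{S}^k) \le \|\bm{S}^k\|_2$ already noted on page 6 via the elementary operator $M_{\bm{S}}$.
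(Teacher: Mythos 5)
Your proposal is correct, and your primary route is genuinely different from the paper's. The paper proves the reduction $r(\bm{S})^k \le \|\bm{S}^k\|_2$ exactly as in your closing ``alternative'': it applies the polynomial spectral mapping theorem for $\sigma_T$ to the tuple $\bm{S}^k$, computes $\|(\lambda_{i_1}\cdots\lambda_{i_k})_{i_1,\dots,i_k}\|_2^2=\bigl(\sum_i|\lambda_i|^2\bigr)^k$ to get $r(\bm{S}^k)=r(\bm{S})^k$, and then invokes the bound $r(\bm{S}^k)\le\|\bm{S}^k\|_2$ already established in Section 2 via the elementary operator $M_{\bm{S}}$. Your main argument instead extends Lemma \ref{Step1} to full submultiplicativity $\|\bm{S}^{k+m}\|_2\le\|\bm{S}^k\|_2\,\|\bm{S}^m\|_2$ (a routine but genuine generalization --- the paper only records the $m=1$ case) and then combines Fekete's lemma with Lemma \ref{lemMuSo} to identify $r(\bm{S})$ with $\inf_j\|\bm{S}^j\|_2^{1/j}$. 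Both routes are sound and rest on ingredients the paper already uses elsewhere (the spectral mapping theorem, respectively the M\"uller--Soltysiak formula); yours trades the spectral mapping computation for a norm-theoretic argument, which is arguably more self-contained at the level of this lemma, at the cost of proving the stronger submultiplicative inequality. One small point worth a sentence in a write-up: the Fekete step applies $\log$ to $\|\bm{S}^j\|_2$, so you should dispose separately of the degenerate case $\|\bm{S}^j\|_2=0$ for some $j$ (there the desired inequality is trivial since $r(\bm{S})=0$); alternatively, you can bypass Fekete entirely by noting $\|\bm{S}^{jk}\|_2^{1/(jk)}\le\|\bm{S}^k\|_2^{1/k}$ and letting $j\to\infty$ along that subsequence.
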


\begin{proof}
By Theorem \ref{Taylorsp}, we know that $\sigma_T(\dbT)=\sigma_T(\bT)$. \ It follows that the same is true for any iterate of $\dbT$; that is, $\sigma_T(\Delta_t^{(n)}(\bT))=\sigma_T(\bT)$ for all $n \ge 1$. \ Also, let us recall that the Spectral Mapping Theorem holds for $\sigma_T$ \cite{Appl,Tay2,Vas}. \ We claim that $r(\bT^k)=(r(\bT))^k$. \ For, using (\ref{defpower}) we have
\begin{eqnarray*}
\sigma_T(\bT^k)&=&\sigma_T(T_1^k,T_1^{k-1}T_2,\cdots,T_{d-1}T_d^{k-1}) \\
&=&\{(\lambda_1^k,\lambda_1^{k-1}\lambda_2,\cdots,\lambda_{d-1}\lambda_d^{k-1},\lambda_d^k):(\lambda_1,\cdots,\lambda_d) \in \sigma_T(\bT) \}.
\end{eqnarray*}
Since 
$$
\left\|(\lambda_1^k,\lambda_1^{k-1}\lambda_2,\cdots,\lambda_{d-1}\lambda_d^{k-1},\lambda_d^k)\right\|_2^2=(\left|\lambda_1\right|^2+\cdots \left|\lambda_d\right|^2)^k=\left\|\bm{\lambda}\right\|_2^{2k},
$$
it follows that $r(\bT^k)=(r(\bT))^k$. \ Therefore, 
\begin{eqnarray*}
(r(\bT))^k&=&(r(\Delta_t^{(n)}(\bT))^k=r((\Delta_t^{(n)}(\bT))^k) \\
&&(\textrm{by the above--mentioned Spectral Mapping Property applied to } \Delta_t^{(n)}(\bT)) \\
&\le&\left\|(\Delta_t^{(n)}(\bT))^k\right\|_2.
\end{eqnarray*}
It follows that $r(\bT) \le \left\|(\Delta_t^{(n)}(\bT))^k\right\|_2^{\frac{1}{k}}$, as desired.
\end{proof}

\begin{remark}
Corollary \ref{cordecrease} and Lemma \ref{Step5} provide a proof of Theorem \ref{thm17}. \qed
\end{remark}

\begin{lemma} \label{Step6}
For $0 < t <1$ and $k \ge 1$, $\left(\left\|(\dbT^{(n)})^k\right\|_2\right)_{n=1}^{\infty}$ is a decreasing sequence. \ If we let $L_{t,k}$ denote its limit, then $r(\bT)^k \le L_{t,k}$.
\end{lemma}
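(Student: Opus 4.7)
The plan is to read this lemma as a direct packaging of Lemmas \ref{Step4} and \ref{Step5}, so the work has essentially already been done. First I would verify monotonicity. Lemma \ref{Step4} says that for every $n \ge 1$ and $k \ge 1$,
$$
\left\|(\Delta_t^{(n+1)}(\bT))^k\right\|_2 \le \left\|(\Delta_t^{(n)}(\bT))^k\right\|_2,
$$
so, keeping $k$ fixed and letting $n$ run, the sequence $\bigl(\|(\Delta_t^{(n)}(\bT))^k\|_2\bigr)_{n \ge 1}$ is decreasing.

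Next I would observe that the sequence is bounded below by $0$, so it converges to some nonnegative real number $L_{t,k}$; this is just monotone convergence on $\mathbb{R}$. By Corollary \ref{cordecrease} we also have the upper bound $L_{t,k} \le \|\bT^k\|_2$, although this is not needed for the stated conclusion.

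For the inequality $r(\bT)^k \le L_{t,k}$, I would invoke Lemma \ref{Step5}, which gives, for each $n \ge 1$,
$$
r(\bT) \le \left\|(\Delta_t^{(n)}(\bT))^k\right\|_2^{1/k}.
$$
Raising both sides to the $k$-th power yields $r(\bT)^k \le \|(\Delta_t^{(n)}(\bT))^k\|_2$ for every $n$, and then letting $n \to \infty$ in this scalar inequality gives $r(\bT)^k \le L_{t,k}$.

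There is no real obstacle: the only mildly delicate point is that Lemma \ref{Step5} already packages the use of Theorem \ref{Taylorsp} (to keep the Taylor spectrum invariant under iteration of $\Delta_t$) and the Spectral Mapping Theorem (to pass to the $k$-th power), so the present lemma reduces to taking a limit of a bounded monotone sequence and carrying a pointwise inequality through it.
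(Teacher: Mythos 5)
Your proof is correct and follows essentially the same route as the paper, which likewise deduces the lemma as an immediate consequence of Corollary \ref{cordecrease} (equivalently, Lemma \ref{Step4}) for monotonicity and Lemma \ref{Step5} for the lower bound, passing to the limit in $n$. No gaps.
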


\begin{proof}
This is a straightforward consequence of Corollary \ref{cordecrease} and Lemma \ref{Step5}. 
\end{proof}

\begin{lemma} \label{Step7}
For $0 \le t \le \frac{1}{2}$ and $k \ge 1$, we have
$$
\left\|(\dbT)^k\right\|_2 \le \left\|\bT^{k+1}\right\|_2^t \left\|\bT^{k-1}\right\|_2^{1-t} \left\|\bT\right\|_2^{1-2t}.
$$ 
\end{lemma}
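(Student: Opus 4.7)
My plan is to follow the framework of Lemmas \ref{Step2} and \ref{Step3}, writing $\left\|(\dbT)^k\right\|_2 = \left\|A^t X B^{1-t}\right\|$ on $\mathcal{H}^{d^k}$, where $A = B := \diag(P,\ldots,P)$ and $X$ is the $d^k \times d^k$ operator matrix whose first column lists the entries $V_{i_1}PV_{i_2}P\cdots PV_{i_k}$, indexed by $(i_1,\ldots,i_k)\in\{1,\ldots,d\}^k$, with zeros elsewhere. The key novelty, compared to Lemma \ref{Step3} (which gave the cruder bound $\left\|\bT^k\right\|_2$), is to peel off a factor $B^{1-2t}$ and then apply the companion Heinz-type inequality (\ref{Ineq2}) in place of (\ref{Ineq3}).

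Concretely, since $t\le \frac{1}{2}$ yields $1-2t\ge 0$, the factorization $B^{1-t}=B^t\cdot B^{1-2t}$, combined with submultiplicativity of the operator norm and (\ref{Ineq2}), gives
$$
\left\|A^t X B^{1-t}\right\| \le \left\|A^t X B^t\right\|\,\left\|B^{1-2t}\right\| \le \left\|AXB\right\|^t\left\|X\right\|^{1-t}\left\|B^{1-2t}\right\|.
$$
Two of the three factors on the right are already at hand: $\left\|AXB\right\|=\left\|\bT^{k+1}\right\|_2$ by Corollary \ref{cork+1}, and $\left\|B^{1-2t}\right\|=\left\|P\right\|^{1-2t}=\left\|\bT\right\|_2^{1-2t}$. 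The desired inequality therefore reduces to proving $\left\|X\right\|\le\left\|\bT^{k-1}\right\|_2$.

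The main obstacle is precisely this last estimate. My approach is to compute $\left\|X\right\|^2 = \left\|X^*X\right\|$ in closed form. The initial telescoping uses $\sum_{i_1}V_{i_1}^*V_{i_1}=Q$, the projection onto $(\ker P)^\perp$, together with $PQP=P^2$, to collapse the $i_1$ summation and produce a central $P^2$. Employing $V_jP=T_j$ and $PV_j^*=T_j^*$ to absorb the remaining $P$'s into matched pairs at the outer levels, the expression rearranges to
$$
X^*X \;=\; \sum_{i_k} V_{i_k}^*\, D\, V_{i_k}, \qquad D \;:=\; \sum_{i_2,\ldots,i_{k-1}} T_{i_{k-1}}^* \cdots T_{i_2}^*\, P^2\, T_{i_2} \cdots T_{i_{k-1}}.
$$
Since $\sum_i V_i^*V_i$ is a projection, the standard compression bound $\left\|\sum V_i^*DV_i\right\|\le\left\|D\right\|$ (valid for $D\ge 0$) yields $\left\|X\right\|^2 \le \left\|D\right\|$.

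The crucial final identification is that $\left\|D\right\|=\left\|\bT^{k-1}\right\|_2^2$: expanding $\left\|\bT^{k-1}\right\|_2^2 = \left\|\sum_{j_1,\ldots,j_{k-1}}T_{j_{k-1}}^*\cdots T_{j_1}^*T_{j_1}\cdots T_{j_{k-1}}\right\|$ and collapsing the innermost $j_1$ summation via $\sum_{j_1}T_{j_1}^*T_{j_1}=P^2$ produces the same operator $D$ after relabeling. Combining everything yields $\left\|X\right\|\le\left\|\bT^{k-1}\right\|_2$, which closes the argument. The delicate point throughout is keeping track of the \emph{inside-out} structure of the $P$'s in $X^*X$ so that $X^*X$ cleanly factors as a compression of $D$ by the joint partial isometry $\bm{V}$; once this is seen, the bound on $\left\|X\right\|$ is entirely natural and completes the proof.
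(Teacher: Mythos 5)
Your proof is correct and follows essentially the same route as the paper's: the same operator matrices $A=B=\diag(P,\dots,P)$ and $X$ on $\mathcal{H}^{d^k}$, the same factorization $B^{1-t}=B^tB^{1-2t}$ (using $1-2t\ge 0$), the same application of inequality (\ref{Ineq2}) to $\left\|A^tXB^t\right\|$, and the same identification $\left\|AXA\right\|=\left\|\bT^{k+1}\right\|_2$ via Corollary \ref{cork+1}. The only difference is that you supply an explicit (and correct) verification of the estimate $\left\|X\right\|\le\left\|\bT^{k-1}\right\|_2$, which the paper uses without comment.
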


\begin{proof}
With the notation used in the Proof of Lemma \ref{Step3}, we have
\begin{eqnarray*}
\left\|(\dbT)^k\right\|_2&=&\left\|A^tXA^{1-t}\right\|=\left\|(A^tXA^t)A^{1-2t}\right\| \\
                         &\le&\left\|A^tXA^t\right\|\left\|A^{1-2t}\right\| \\
												 & \le &	\left\|AXA\right\|^t\left\|X\right\|^{1-t} \left\|\bT^{1-2t}\right\|_2 \\
												&	\le & \left\|\bT^{k+1}\right\|_2\left\|\bT^{k-11}\right\|_2\left\|\bT^{1-2t}\right\|_2,
\end{eqnarray*}																	
using Corollary \ref{cork+1} to get the first factor in the last line.
\end{proof}

\begin{lemma} \label{Step8}
For $\frac{1}{2} \le t \le 1$ and $k \ge 1$, we have
$$
\left\|(\dbT)^k\right\|_2 \le \left\|\bT^{k+1}\right\|_2^{1-t} \left\|\bT^{k-1}\right\|_2^{t} \left\|\bT\right\|_2^{2t-1}.
$$ 
\end{lemma}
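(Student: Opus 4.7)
The plan is to mirror the proof of Lemma \ref{Step7}, but swap the roles of $t$ and $1-t$. Since now $\tfrac{1}{2} \le t \le 1$, we have $1-t \le \tfrac{1}{2} \le t$ and $2t-1 \ge 0$, so the exponents we can handle cleanly with Heinz's inequality are on the \emph{smaller} side. As in Lemma \ref{Step3}, I keep the operator-matrix setup with $A := \diag(P,\cdots,P)$ and $X$ the block whose only nonzero column has entries $V_{i_1}PV_{i_2}P\cdots PV_{i_k}$, so that
$$
\left\|(\dbT)^k\right\|_2 = \left\|A^tXA^{1-t}\right\|.
$$

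The key factorization is $A^tXA^{1-t} = A^{2t-1}\cdot A^{1-t}XA^{1-t}$. Submultiplicativity plus $2t-1 \ge 0$ gives $\left\|A^{2t-1}\right\| \le \left\|A\right\|^{2t-1} = \left\|\bT\right\|_2^{2t-1}$, so
$$
\left\|A^tXA^{1-t}\right\| \le \left\|\bT\right\|_2^{2t-1}\left\|A^{1-t}XA^{1-t}\right\|.
$$
Now apply the Heinz inequality (\ref{Ineq2}) to the symmetric factor $A^{1-t}XA^{1-t}$ with exponent $1-t \in [0,\tfrac{1}{2}]$:
$$
\left\|A^{1-t}XA^{1-t}\right\| \le \left\|AXA\right\|^{1-t}\left\|X\right\|^{t}.
$$
Corollary \ref{cork+1} identifies $\left\|AXA\right\| = \left\|\bT^{k+1}\right\|_2$, so it remains to show $\left\|X\right\| \le \left\|\bT^{k-1}\right\|_2$. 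Combining these yields the claim.

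The main obstacle is the auxiliary bound $\left\|X\right\| \le \left\|\bT^{k-1}\right\|_2$, which is the analogue of the implicit estimate appearing in Lemma \ref{Step7}. The plan for this step is to rewrite a typical column entry using $T_i = V_iP$ as $V_{i_1}PV_{i_2}P\cdots PV_{i_k} = T_{i_1}T_{i_2}\cdots T_{i_{k-1}}V_{i_k}$, so that
$$
\left\|X\right\|^2 = \left\|\sum_{i_k}V_{i_k}^*\Big(\sum_{i_1,\cdots,i_{k-1}}(T_{i_1}\cdots T_{i_{k-1}})^*(T_{i_1}\cdots T_{i_{k-1}})\Big)V_{i_k}\right\|.
$$
The inner sum has norm $\left\|\bT^{k-1}\right\|_2^2$. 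Then the standard estimate $V^*MV \le \left\|M\right\|V^*V$ (valid for $M \ge 0$), together with the fact that $\sum_{i_k} V_{i_k}^*V_{i_k}$ is the projection onto $\overline{\Ran P}$ (hence of norm at most $1$), delivers $\left\|X\right\| \le \left\|\bT^{k-1}\right\|_2$. Once this bookkeeping is in place, assembling the three factors $\left\|\bT\right\|_2^{2t-1}$, $\left\|\bT^{k+1}\right\|_2^{1-t}$, and $\left\|\bT^{k-1}\right\|_2^{t}$ produces exactly the stated inequality.
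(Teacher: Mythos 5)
Your proof is correct and follows essentially the same route as the paper's: the factorization $A^tXA^{1-t}=A^{2t-1}(A^{1-t}XA^{1-t})$, submultiplicativity together with $\left\|A^{2t-1}\right\|=\left\|\bT\right\|_2^{2t-1}$, the Heinz estimate (\ref{Ineq2}) applied to the symmetric factor $A^{1-t}XA^{1-t}$, and Corollary \ref{cork+1}. The only difference is that you spell out the auxiliary bound $\left\|X\right\|\le\left\|\bT^{k-1}\right\|_2$, which the paper leaves implicit, and your verification of that bound is correct.
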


\begin{proof}
As in Lemma \ref{Step7},
\begin{eqnarray*}
\left\|(\dbT)^k\right\|_2&=&\left\|A^tXA^{1-t}\right\|=\left\|A^{2t-1}(A^{1-t}XA^{1-t})\right\| \\
                         &\le&\left\|A^{2t-1}\right\|\left\|A^{1-t}XA^{1-t}\right\| \\
												 & \le &\left\|A^{2t-1}\right\|	\left\|AXA\right\|^{1-t} \left\|X\right\|^{t} \\
												&	\le & \left\|\bT\right\|_2^{2t-1}\left\|\bT^{k+1}\right\|_2\left\|\bT^{k-1}\right\|_2,
\end{eqnarray*}
as desired.
\end{proof}

\begin{lemma} \label{Step9}
Let $\bT$ be a commuting $d$--tuple of operators on Hilbert space, and assume that $r(\bT) > 0$. \ Then, for $0 < t <1$ and $k \ge 1$, we have 
$$
L_{t,k}=L_{t,1}^k .
$$
\end{lemma}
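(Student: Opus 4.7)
The plan is to combine the sub-multiplicativity of $\|\cdot\|_2$ on powers of a commuting $d$-tuple (Lemma \ref{Step1}, applied at each iterate) with the Heinz--McIntosh--type inequalities of Lemmas \ref{Step7} and \ref{Step8}, and then to close the argument by an elementary two-term induction on $k$.

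For the upper bound, apply Lemma \ref{Step1} to the commuting $d$-tuple $\bm{S}_n := \Delta_t^{(n)}(\bT)$ to obtain $\|\bm{S}_n^{k+1}\|_2 \le \|\bm{S}_n\|_2\,\|\bm{S}_n^k\|_2$, and let $n \to \infty$ to get $L_{t,k+1} \le L_{t,1}\,L_{t,k}$. Iterating gives $L_{t,k} \le L_{t,1}^k$.

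For the reverse inequality, assume first that $\tfrac{1}{2} \le t < 1$. Applying Lemma \ref{Step8} to $\bm{S}_n$ (noting that $\Delta_t(\bm{S}_n)=\Delta_t^{(n+1)}(\bT)$) produces, for every $k\ge 1$,
\[
\|\Delta_t^{(n+1)}(\bT)^k\|_2 \le \|\Delta_t^{(n)}(\bT)^{k+1}\|_2^{1-t}\,\|\Delta_t^{(n)}(\bT)^{k-1}\|_2^t\,\|\Delta_t^{(n)}(\bT)\|_2^{2t-1},
\]
where we adopt the natural convention $\bT^{0}:=(I)$ so that $\|\bm{S}_n^0\|_2=1$ and $L_{t,0}=1$. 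Sending $n\to\infty$ yields
\[
L_{t,k} \le L_{t,k+1}^{1-t}\,L_{t,k-1}^t\,L_{t,1}^{2t-1}\qquad (k\ge 1).
\]
Since $r(\bT)>0$, Lemma \ref{Step6} gives $L_{t,k}\ge r(\bT)^k>0$, so taking logarithms is legitimate. Setting $c_k := \log L_{t,k} - k\log L_{t,1}$ (so that $c_0=c_1=0$), a direct computation shows that the coefficient of $\log L_{t,1}$ cancels exactly, yielding
\[
c_{k+1} \ge \frac{c_k - t\, c_{k-1}}{1-t}\qquad (k\ge 1),
\]
while the upper bound already established gives $c_{k+1}\le c_k$. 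An easy induction now finishes the argument: if $c_{k-1}=c_k=0$, then the recursion forces $c_{k+1}\ge 0$ and sub-multiplicativity forces $c_{k+1}\le 0$, so $c_{k+1}=0$. Starting from $c_0=c_1=0$ we conclude $c_k=0$ for all $k$, i.e.\ $L_{t,k}=L_{t,1}^k$.

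The case $0<t\le\tfrac{1}{2}$ is entirely parallel, with Lemma \ref{Step7} replacing Lemma \ref{Step8}; the recursion then reads $c_{k+1}\ge (c_k-(1-t)\,c_{k-1})/t$, and the same induction applies. The subtle point of the proof is the precise algebraic cancellation of the $\log L_{t,1}$ term after the substitution $c_k=\log L_{t,k}-k\log L_{t,1}$, which is what lets the Heinz-type inequality and sub-multiplicativity fit together into a rigid two-sided control; everything else is a routine two-step induction anchored at the convention-driven base values $c_0=c_1=0$.
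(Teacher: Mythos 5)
Your proposal is correct and follows essentially the same route as the paper: the same case split at $t=\tfrac12$, the same application of Lemmas \ref{Step1}, \ref{Step7} and \ref{Step8} to the iterates $\Delta_t^{(n)}(\bT)$ followed by a passage to the limit in $n$, and the same induction on $k$ anchored by $L_{t,k}\ge r(\bT)^k>0$ from Lemma \ref{Step6}. The only difference is cosmetic: your substitution $c_k=\log L_{t,k}-k\log L_{t,1}$ repackages the paper's squeeze $L^m\le L_{t,m+1}^t L^{m-t(m+1)}\le L^m$ as a two-term recursion, and you make explicit the convention $\left\Vert\bT^0\right\Vert_2=1$ that the paper uses implicitly.
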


\begin{proof}
We will split the proof into two cases, and use induction on $k$.

{\bf Case 1}: $0 < t \le \frac{1}{2}$. \ For $k=1$, we use Lemma \ref{Step6}. \ Assume now that $L_{t,k}=L_{t,1}^k$ for all integers $k \le m$; we will establish the identity for $k=m+1$. \ By Lemma \ref{Step7}, with $\bT$ replaced by $\Delta_t^{(n)}(\bT)$, we have
\begin{eqnarray}
\left\|(\Delta_t^{(n+1)}(\bT))^m\right\|_2 &\le& \left\|(\Delta_t^{(n)}(\bT))^{m+1}\right\|_2^t \left\|(\Delta_t^{(n)}(\bT))^{m-1}\right\|_2^{1-t} \left\|\Delta_t^{(n)}(\bT)\right\|_2^{1-2t} \label{id1} \\
&\le&\left\|(\Delta_t^{(n)}(\bT))^{m}\right\|_2^t \left\|(\Delta_t^{(n)}(\bT))\right\|_2^t\left\|(\Delta_t^{(n)}(\bT))^{m-1}\right\|_2^{1-t} \left\|\Delta_t^{(n)}(\bT)\right\|_2^{1-2t} \nonumber \\
&& (\textrm{ by Lemma \ref{Step1}}) \nonumber \\
&=&\left\|(\Delta_t^{(n)}(\bT))^{m}\right\|_2^t \left\|(\Delta_t^{(n)}(\bT))^{m-1}\right\|_2^{1-t} \left\|\Delta_t^{(n)}(\bT)\right\|_2^{1-t} \label{id2}.
\end{eqnarray}
We now take the limit as $n \rightarrow \infty$, in (\ref{id1}) and (\ref{id2}) to obtain
$$
L_{t,m} \le L_{t,m+1}^t L_{t,m-1}^{1-t} L_{t,1}^{1-2t} \le L_{t,m}^t L_{t,m-1}^{1-t} L_{t,1}^{1-t}.
$$
In view of the inductive hypothesis, and letting $L:=L_{t,1}$, we can write
$$
L^m \le L_{t,m+1}^t L^{(m-1)(1-t)}L^{1-2t} \le L^{mt} L^{(m-1)(1-t)} L^{1-t};
$$
equivalently,
$$
L^{m} \le L_{t,m+1}^t L^{m-t(m+1)} \le L^m.
$$
It follows that $L_{t,m+1}^t L^{m-t(m+1)} = L^m$. \ By Lemma \ref{Step6}, $L \ge r(\bT) > 0$, so we obtain $L_{t,m+1}=L^{m+1}$, as desired.

{\bf Case 2}: $\frac{1}{2} \le t <1$. \ Here,
\begin{eqnarray}
\left\|(\Delta_t^{(n+1)}(\bT))^m\right\|_2 &\le& \left\|(\Delta_t^{(n)}(\bT))^{m+1}\right\|_2^{1-t} \left\|(\Delta_t^{(n)}(\bT))^{m-1}\right\|_2^t \left\|\Delta_t^{(n)}(\bT)\right\|_2^{2t-1} \label{id3} \\
&\le&\left\|(\Delta_t^{(n)}(\bT))^{m}\right\|_2^{1-t} \left\|(\Delta_t^{(n)}(\bT))\right\|_2^{1-t}\left\|(\Delta_t^{(n)}(\bT))^{m-1}\right\|_2^{t} \left\|\Delta_t^{(n)}(\bT)\right\|_2^{2t-1} \nonumber \\
&& (\textrm{ by Lemma \ref{Step1}}) \nonumber \\
&=&\left\|(\Delta_t^{(n)}(\bT))^{m}\right\|_2^{1-t} \left\|(\Delta_t^{(n)}(\bT))^{m-1}\right\|_2^{t} \left\|\Delta_t^{(n)}(\bT)\right\|_2^{t} \label{id4}.
\end{eqnarray}
As above, we then take the limit as $n \rightarrow \infty$, in (\ref{id3}) and (\ref{id4}) to obtain
$$
L_{t,m} \le L_{t,m+1}^{1-t} L_{t,m-1}^{t} L_{t,1}^{2t-1} \le L_{t,m}^{1-t} L_{t,m-1}^{t} L_{t,1}^{t}.
$$
In view of the inductive hypothesis, we then have 
$$
L^m \le L_{t,m+1}^{1-t} L^{(m-1)t}L^{2t-1} \le L^{m(1-t)} L^{(m-1)t} L^{t};
$$
equivalently,
$$
L^{m} \le L_{t,m+1}^{1-t} L^{(m+1)t-1} \le L^m.
$$
It follows that $L_{t,m+1}^t L^{m+(m+1)(t-1)} = L^m$. \ By Lemma \ref{Step6}, $L \ge r(\bT) > 0$, so we obtain $L_{t,m+1}=L^{m+1}$, as desired.
\end{proof}

We are now ready to put all the pieces together.

\begin{proof}[Proof of Theorem \ref{thm18A}] \ We first consider the case when $r(\bT) > 0$. \ By Corollary \ref{cordecrease} and Lemma \ref{Step9}, for each $k \ge 1$ the sequence $\{\left\|(\Delta_t^{(n)}(\bT))^k\right\|^{\frac{1}{k}}\}_{n=1}^{\infty}$ is decreasing to $L:=L_{t,1}$. \ Moreover, $L \ge r(\bT)$. \ We claim that $L=r(\bT)$. \ Suppose that $L > r(\bT)$. \ Fix $n_0 \ge 1$ and consider the sequence $\{\left\|(\Delta_t^{(n_0)}(\bT))^k\right\|_2^{\frac{1}{k}} \}_{k=1}^{\infty}$. \ By Lemmas \ref{Step6} and \ref{Step9}, this sequence is bounded from below by $L$. \ It follows that $\lim \inf_{k \rightarrow \infty} \left\|(\Delta_t^{(n_0)}(\bT))^k\right\|_2^{\frac{1}{k}} \ge L$. \ But we know that this limit is the spectral radius $r(\Delta_t^{(n)}(\bT))$, so $r(\bT) \ge L > r(\bT)$, a contradiction. 

Next, we look at the case when $r(\bT)=0$. \ On the orthogonal direct sum $\mathcal{H} \oplus \mathcal{H}$, let $\bm{S}:=\bT \oplus (cI,0,\cdots,0)$, where $c>0$. \ Clearly, $\sigma_T(\bm{S}) = \sigma_T(\bT) \cup \{(c,0,\cdots,0)\}=\{(c,0,\cdots,0)\}$, so $r(\bm{S})=c>0$. \ Moreover, $\Delta_t^{(n)}(\bm{S})=\Delta_t^{(n)}(\bm{T}) \oplus (cI,0,\cdots,0)$. \ It follows that $\left\|\Delta_t^{(n)}(\bT)\right\|_2 \le \left\|\Delta_t^{(n)}(\bm{S})\right\|_2$, and therefore
$$
\lim \sup \left\|\Delta_t^{(n)}(\bT)\right\|_2 \le \lim \sup \left\|\Delta_t^{(n)}(\bm{S})\right\|_2=r(\bm{S})=c.
$$
Since $c$ can be taken arbitrarily small, we conclude that $\lim \left\|\Delta_t^{(n)}(\bT)\right\|_2=0=r(\bT)$, as desired.
\end{proof}

\begin{remark}
Theorem \ref{thm16} is the $d=1$ instance of Theorem \ref{thm18A}. \ Similarly, Theorem \ref{thm15} follows from Lemma \ref{Step6} when $d=k=1$. \qed
\end{remark}

\end{document}